\newtheorem{theorem}{Theorem}[section]
\newtheorem*{mainthmq1}{Theorem~\ref{thm:t=1}}
\newtheorem*{mainthmq0}{Theorem~\ref{thm:t=0}}
\newtheorem*{mainthmqinv}{Theorem~\ref{thm:q-1}}
\newtheorem*{corqinv}{Corollary~\ref{cor:q-1}}
\newtheorem*{mainthmq1a}{Theorem~\ref{thm:q1a}}
\newtheorem{proposition}[theorem]{Proposition}
\newtheorem{lemma}[theorem]{Lemma}
\newtheorem{corollary}[theorem]{Corollary}
\theoremstyle{definition}
\newtheorem{definition}[theorem]{Definition}
\newtheorem{example}[theorem]{Example}
\newtheorem{conjecture}[theorem]{Conjecture}
\theoremstyle{remark}
\newtheorem{remark}[theorem]{Remark}
\numberwithin{equation}{section}
\newcommand{\stat}{\operatorname{stat}}
\def\F{\mathcal{F}}
\def\F{{\mathcal{F}}}
\def\Sn{{\mathfrak{S}}_n}
\def\a{{\bf a }}
\newcommand{\ZZ}{\mathbf Z}
\newcommand{\arxiv}[1]{\url{http://arxiv.org/abs/#1}}
\newcommand{\be}{\begin{equation}}
\newcommand{\ee}{\end{equation}}
\newcommand{\bd}{\begin{definition}}
\newcommand{\ed}{\end{definition}}
\newcommand{\bt}{\begin{theorem}}
\newcommand{\et}{\end{theorem}}
\newcommand{\bl}{\begin{lemma}}
\newcommand{\el}{\end{lemma}}
\newcommand{\bp}{\begin{proposition}}
\newcommand{\ep}{\end{proposition}}
\newcommand{\bc}{\begin{corollary}}
\newcommand{\ec}{\end{corollary}}
\def\i{{\it in}}
\DeclareMathOperator{\Ehr}{Ehr}
\DeclareMathOperator{\hilb}{Hilb}
\DeclareMathOperator{\inv}{inv}
\DeclareMathOperator{\Inv}{Inv}
\DeclareMathOperator{\dinv}{dinv}
\DeclareMathOperator{\area}{area}
\DeclareMathOperator{\codeg}{codeg}
\DeclareMathOperator{\pmaj}{pmaj}
\renewcommand{\b}{\mathbf}
\newcommand{\RR}{\mathbf R}
\renewcommand{\subset}{\subseteq}
\newcommand{\od}{\bar d}
\renewcommand{\mathbb}{\mathbf}
\newtheorem*{theorem1*}{Theorem \ref{thm:main}}
\newcommand{\old}[1]{}
\title{Flow polytopes and the space of diagonal harmonics}
\author{Ricky Ini Liu}
\address{Department of Mathematics, North Carolina State University, Raleigh, NC}
\email{riliu@ncsu.edu}
\author{Karola M\'esz\'aros}
\address{Department of Mathematics, Cornell University, Ithaca, NY}
\email{karola@math.cornell.edu}
\author{Alejandro H. Morales}
\address{Department of Mathematics, University of California, Los Angeles, Los Angeles, CA}
\email{ahmorales@math.ucla.edu}
\thanks{M\'esz\'aros was partially supported by a National Science Foundation Grant (DMS 1501059). Morales was partially supported by an AMS-Simons travel grant.}
\begin{document}

\begin{abstract}
	A result of Haglund implies that the $(q,t)$-bigraded Hilbert series of the space of diagonal harmonics is a $(q,t)$-Ehrhart function of the flow polytope of a complete graph with netflow vector $(-n, 1, \dots, 1).$ We study the $(q,t)$-Ehrhart functions of flow polytopes of threshold graphs with arbitrary netflow vectors. Our results generalize previously known specializations of the mentioned bigraded Hilbert series at $t=1$, $0$, and $q^{-1}$. As a corollary to our results, we obtain a proof of a conjecture of Armstrong, Garsia, Haglund, Rhoades and Sagan about the $(q, q^{-1})$-Ehrhart function of the flow polytope of a complete graph with an arbitrary netflow vector.
\end{abstract}

\maketitle

\section{Introduction}
\label{sec:intro} 

	The \textit{space of diagonal harmonics}
	\[DH_n=\left\{  f \in \mathbf{C}[x_1, \ldots, x_n, y_1, \ldots, y_n] \quad \left| \quad\sum_{i=1}^n \frac{\partial^h}{\partial x_i^h} \frac{\partial^k}{\partial y_i^k} f=0 \; \text{ for all } \; h+k>0 \right. \right\}\]
	was introduced by Garsia and Haiman \cite{gh1} in their study of Macdonald polynomials. Haiman \cite{hai} proved using algebro-geometric arguments that it has dimension $(n+1)^{n-1}$ as a vector space over $\mathbf{C}$. The space $DH_n$ is naturally bigraded by the degree of the variables $x_i$ and $y_j$. Thus, one can obtain a $q,t$-analogue of $(n+1)^{n-1}$ by considering the bigraded Hilbert series of $DH_n$, which we denote by $\hilb_{q,t}(DH_n)$. This is a symmetric polynomial in $q$ and $t$ with nonnegative coefficients.
	
	The number $(n+1)^{n-1}$ counts spanning trees of the complete graph on $n+1$ vertices or parking functions of size $n$. A combinatorial model for this bigraded Hilbert series in terms of these objects was conjectured by Haglund and Loehr \cite{hl} in 2002 and settled in 2015 by Carlsson and Mellit \cite{CM} in their proof of the more general Shuffle Conjecture \cite{HHLRU}. Stated in terms of parking functions, the result is the following (see \cite[Conj. 2]{hl}).
	
	\begin{theorem}[Carlsson--Mellit \cite{CM}, Hilbert series conjecture of Haglund--Loehr \cite{hl}] \label{HLhilb}
		\begin{equation}
			\operatorname {Hilb}_{q,t}(DH_n) = \sum_{p \in \mathcal{P}_n} q^{\area(p)}t^{\dinv(p)},
		\end{equation}
		where $\mathcal{P}_n$ denotes parking functions of size $n$.
	\end{theorem}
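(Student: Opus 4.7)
The plan is to translate the statement into a symmetric function identity via Haiman's work and then derive it from the Shuffle Theorem, whose proof requires the Dyck path algebra introduced by Carlsson and Mellit.

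First, by Haiman's theorem, the bigraded Frobenius characteristic of $DH_n$ equals $\nabla e_n$, where $\nabla$ is the Bergeron--Garsia operator and $e_n$ is the elementary symmetric function. The bigraded Hilbert series is recovered from the Frobenius characteristic by pairing with $h_1^n$:
\[
\hilb_{q,t}(DH_n) = \langle \nabla e_n,\, h_1^n\rangle.
\]
Since the coefficient of $x_1 x_2 \cdots x_n$ in Gessel's fundamental quasisymmetric function $F_S$ equals $1$ for every $S \subseteq \{1,\dots,n-1\}$, the theorem follows once one establishes the Shuffle Theorem
\[
\nabla e_n = \sum_{p \in \P_n} q^{\area(p)} t^{\dinv(p)} F_{\mathrm{ides}(p)}.
\]

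Second, to prove the Shuffle Theorem I would follow Carlsson--Mellit. The starting point is the \emph{compositional refinement} of Haglund--Morse--Zabrocki, in which both sides are decomposed according to a composition $\alpha \models n$: the symmetric-function side as $\sum_\alpha C_\alpha \cdot 1$ for certain plethystic creation operators $C_\alpha$, and the combinatorial side as a sum over parking functions whose diagonal touch composition equals $\alpha$. The key device is the \emph{Dyck path algebra} $\mathbb{A}_{q,t}$, a noncommutative algebra with generators $T_i$, $d_+$, $d_-$ acting on a space of symmetric-function-valued polynomials; these generators simultaneously encode the operators $C_\alpha$ and elementary combinatorial moves on partially-built parking paths (adding a car, adding a return to the diagonal, shifting a column, etc.). One then shows that both sides of the compositional identity satisfy a common system of recursions inside $\mathbb{A}_{q,t}$ and agree on base cases.

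The main obstacle is the construction and analysis of $\mathbb{A}_{q,t}$ itself: one must isolate the right generators, prove nontrivial braid and commutation relations among the $T_i$ and $d_\pm$, and verify that each combinatorial move on parking paths translates faithfully into an operator identity. Once this algebraic framework is in hand, the base-case check and the matching of recursions are routine, and the Shuffle Theorem---and with it the Hilbert series identity of Haglund--Loehr---follows immediately upon specialization.
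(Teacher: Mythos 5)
The paper does not actually prove this theorem: it is imported as a black box, attributed to Carlsson and Mellit's resolution of the Haglund--Loehr conjecture, and everything in the paper sits on top of it via Haglund's Tesler-matrix identity (Theorem~\ref{thm:1}). So there is no internal proof to compare yours against. Your first step is correct and standard: by Haiman's theorem the bigraded Frobenius characteristic of $DH_n$ is $\nabla e_n$, the Hilbert series is $\langle \nabla e_n, h_1^n\rangle$, and since $\langle F_S, h_1^n\rangle = 1$ for every $S\subseteq\{1,\dots,n-1\}$, pairing the Shuffle Theorem with $h_1^n$ yields exactly the claimed identity.

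As a proof, however, your proposal has a gap essentially the size of the Carlsson--Mellit paper. All of the mathematical content lies in the steps you first flag as the ``main obstacle'' and then dismiss as ``routine'': constructing the Dyck path algebra $\mathbb{A}_{q,t}$ together with its polynomial representation, establishing the relations among the $T_i$ and $d_{\pm}$, and --- hardest of all --- proving that the symmetric-function side $C_\alpha\cdot 1$ of the Haglund--Morse--Zabrocki compositional conjecture satisfies the same recursion as the parking-function side. That last verification is not routine; it rests on delicate plethystic operator identities (and on the identity $\sum_{\alpha\models n}C_\alpha\cdot 1 = e_n$ needed to recover the uncompositional statement), and it is where most of the work in \cite{CM} is concentrated. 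What you have written is an accurate roadmap of the literature rather than a proof; for the purposes of this paper that is harmless, since the theorem is only ever used as a cited result, but it should not be mistaken for a self-contained argument.
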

	
	For more background on $DH_n$ and the Shuffle Conjecture see \cite{Bergeron,Haglund1,Haglund2}. For the definition of $\area$ and $\dinv$ on parking functions see \cite[\S 1, \S 2]{hl}. Special cases of this Hilbert series when $t=1,0,q^{-1}$ are combinatorially appealing:
	\begin{align}
		\hilb_{q,1}(DH_n) &= \sum_{p \in \mathcal{P}_n} q^{\area(p)}, \label{case:q1}\\
		\hilb_{q,0}(DH_n) &= \sum_{w \in S_n} q^{\inv(w)}=[n]_q!, \label{case:q0}\\
		q^{\binom{n}{2}} \hilb_{q,q^{-1}}(DH_n) &=  [n+1]_q^{n-1}, \label{case:qqinv}
	\end{align}
	where $S_n$ is the symmetric group of size $n$, $\inv(w)$ is the number of inversions of the permutation $w$, and $[k]_q = 1+q+\cdots + q^{k-1}$. The right hand side of \eqref{HLhilb} evaluated at $(q,1)$ is trivially the right hand side of \eqref{case:q1}.  The fact that the right hand side of \eqref{HLhilb} evaluated at $(q,0)$ yields the right hand side of \eqref{case:q0} follows from \cite[Theorem 5.3]{Haglund1} together with the fact that the major index and number of inversions are equidistributed over $S_n$ \cite[\S 1.4]{EC}. Finally, Loehr \cite{L} showed the case $(q,q^{-1})$ combinatorially. Showing directly that the Hilbert series has these evaluations is highly nontrivial and is due to Haiman \cite{hai94}.
	
	Before the proof of Haglund and Loehr's Hilbert series conjecture in \cite{CM}, Haglund \cite{Hag} gave an expression for the Hilbert series as a  weighted sum over certain upper triangular matrices called {\em Tesler matrices} \cite[\href{http://oeis.org/A008608}{A008608}]{OEIS}. In \cite{tesler}, M\'esz\'aros, Morales, and Rhoades noticed that these matrices can be easily reinterpreted as integer flows on the complete graph $K_{n+1}$ with netflow vector $(-n, 1, \ldots,1)$. With this interpretation, Haglund's result states that the Hilbert series equals a weighted sum over the lattice points of the polytope of flows on $K_{n+1}$ with netflow vector $(-n, 1, \ldots,1)$. Denoting this sum by $\Ehr_{q,t}(\F_{K_{n+1}}(-n, 1, \ldots, 1))$ (see Section \ref{sec:preliminaries} for the precise definition of flows and their weight), Haglund's result can be restated as follows.
	
	\begin{theorem}[Haglund \cite{Hag}] \label{thm:1}
		\[\operatorname {Hilb}_{q,t}(DH_n)= \Ehr_{q,t}(\F_{K_{n+1}}(-n, 1, \ldots, 1)).\]
	\end{theorem}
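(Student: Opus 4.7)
The plan is to recognize the statement as a combinatorial reinterpretation of Haglund's Tesler matrix formula for $\hilb_{q,t}(DH_n)$ from \cite{Hag}. I would proceed in three steps: first invoke Haglund's identity, which expresses the Hilbert series as a weighted sum over Tesler matrices; second, spell out the bijection between those Tesler matrices and the integer flows on $K_{n+1}$ with netflow vector $(-n,1,\ldots,1)$; and third, verify that Haglund's weight on a Tesler matrix is exactly the weight that $\Ehr_{q,t}$ assigns to the corresponding flow.

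The first step is the main analytic input and I would simply cite it. Haglund's theorem states that $\hilb_{q,t}(DH_n) = \sum_T \mathrm{wt}_{q,t}(T)$, where the sum runs over $n\times n$ upper triangular nonnegative integer matrices $T=(T_{ij})$ satisfying the hook sum condition $\sum_{j\geq i} T_{ij} - \sum_{k<i} T_{ki} = 1$ for each $i$, and $\mathrm{wt}_{q,t}(T)$ is an explicit product over the entries of $T$. Its proof goes through the action of the nabla operator on $e_n$ together with plethystic manipulations of modified Macdonald polynomials, and I would not attempt to redevelop it here.

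For the second step, following the observation in \cite{tesler}, I would associate to a Tesler matrix $T$ the flow $f$ on $K_{n+1}$ which places $T_{ij}$ units of flow on the edge from vertex $i$ to vertex $j+1$. The hook sum condition translates line by line into the flow conservation equations at each vertex with netflow $(-n,1,\ldots,1)$ (the $-n$ is produced at vertex $1$ since each hook sum forces a unit of ``outflow'' that accumulates at the source), nonnegativity of matrix entries matches nonnegativity of edge flows, and the map is clearly invertible; so this provides a bijection between Tesler matrices and the lattice points of $\F_{K_{n+1}}(-n,1,\ldots,1)$.

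For the third step, by the definition of $\Ehr_{q,t}$ set up in Section \ref{sec:preliminaries}, both Haglund's weight and the Ehrhart weight are products of local factors indexed by entries of $T$ (equivalently by edges of $K_{n+1}$), so the verification reduces to comparing a single factor per entry under the bijection. The theorem then follows directly from Haglund's identity. The main obstacle is really the first step, which is Haglund's original symmetric-function argument rather than anything reproved here; the remaining combinatorial content is routine provided one is careful with the indexing convention relating matrix rows and columns to graph vertices.
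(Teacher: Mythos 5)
Your overall route is exactly the one the paper takes: Theorem~\ref{thm:1} is Haglund's theorem quoted as an input, and the only content to supply is the dictionary between Tesler matrices and lattice points of $\F_{K_{n+1}}(-n,1,\ldots,1)$, together with the observation that the weight in the definition of $\Ehr_{q,t}$ was set up precisely to be Haglund's weight $\left(-(1-t)(1-q)\right)^{\#\{b_{ij}>0\}-n}\prod wt_{q,t}(b_{ij})$. This is the content of Remark~\ref{rem:flow2Tesler}, following \cite{tesler}. Your first and third steps are fine.

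The one concrete problem is the bijection in your second step. Sending $T_{ij}$ to the flow on the edge from vertex $i$ to vertex $j+1$ does not turn the hook-sum condition into flow conservation: under that map the row sum $\sum_{j\ge k}T_{kj}$ measures the outflow at vertex $k$, while the column sum $\sum_{i<k}T_{ik}$ measures (only part of) the inflow at the \emph{different} vertex $k+1$, so their difference is not the net flow at any single vertex. (Relatedly, the $-n$ sits at the sink that absorbs all the flow, which in this paper's convention is vertex $0$, not at ``vertex $1$'' acting as a source.) The correct dictionary, on vertices $1,\ldots,n+1$ with edges directed toward larger labels, is $T_{ij}\leftrightarrow$ edge $(i,j)$ for $i<j$ and $T_{ii}\leftrightarrow$ edge $(i,n+1)$; then the net flow at each vertex $k\le n$ is exactly the $k$th hook sum, equal to $1$, and the net flow at the sink $n+1$ is $-\sum_i T_{ii}=-n$. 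To match the conventions of this paper (vertices $0,\ldots,n$, edges directed from larger to smaller, sink at $0$) one further reverses the labels, which is why Remark~\ref{rem:flow2Tesler} reads $b_{ij}=a_{n+1-i,\,n+1-j}$ off the diagonal and $b_{jj}=a_{n+1-j,\,0}$ on it. With that correction your argument closes and coincides with the paper's.
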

	
	Combining Theorems~\ref{HLhilb} and \ref{thm:1}, we obtain intriguing combinatorial identities between $\Ehr_{q,t}(\F_{K_{n+1}}(-n,1,\ldots,1))$ and $(q,t)$-analogues (for $t$, $t=1$, $t=0$, and $t=q^{-1}$) of the number of parking functions of size $n$:
	\begin{equation} \label{eq:motivation1}
		\Ehr_{q,t}(\F_{K_{n+1}}(-n,1,\ldots,1)) =  \sum_{p \in \mathcal{P}_n} q^{\area(p)}t^{\dinv(p)}.
	\end{equation}
	 
	There are many natural bijections between spanning trees of $K_{n+1}$ and parking functions of size $n$. Correspondingly, there are various statistics $(\stat_1, \stat_2)$ on trees that can be used to rewrite the right hand side of \eqref{eq:motivation1} as a sum over spanning trees of $K_{n+1}$:
	
	\begin{equation} \label{eq:motivation}
		\Ehr_{q,t}(\F_{K_{n+1}}(-n,1,\ldots,1)) =  \sum_T q^{\stat_1(T)}t^{\stat_2(T)},
	\end{equation}
	where the sum is over all spanning trees $T$ of $K_{n+1}$: see for instance \cite[\S 4]{hl} for one example. Equations \eqref{case:q1}, \eqref{case:q0}, and \eqref{case:qqinv} can be rewritten as 
	\begin{align}
		\Ehr_{q,1}(\F_{K_{n+1}}(-n,1,\ldots,1)) &= \sum_{T} q^{\inv(T)}, \label{case:q11}\\
		\Ehr_{q,0}(\F_{K_{n+1}}(-n,1,\ldots,1)) &= [n]_q!, \label{case:q01}\\
		q^{\binom{n}{2}} \Ehr_{q,q^{-1}}(\F_{K_{n+1}}(-n,1,\ldots,1))&=  [n+1]_q^{n-1}, \label{case:qqinv1}
	\end{align}
	where on the right hand side of \eqref{case:q11}, $T$ ranges over all spanning trees of $K_{n+1}$, and $\inv(T)$ is the number of inversions of $T$ (see Section \ref{sec:q1} for the definition of $\inv$ statistic and the correspondence to the $\area$ statistic on parking functions). It is then natural to verify these identities directly. Doing so in the general case $(q,t)$ would give an alternative proof of the now settled Haglund--Loehr conjecture. Progress in this direction started with Levande \cite{PL} who verified the cases $(q,0)$ using a sign-reversing involution. Armstrong et al.\ \cite{AGHRS} verified the case $(q,1)$. We verify directly the $(q,q^{-1})$ case in this paper.
	
	More generally, one could extend the identity \eqref{eq:motivation} to flows with other netflow vectors (in \cite{AGHRS}, these are called \emph{generalized Tesler matrices}) or to other graphs. The former was done in \cite{AW} for the $(q,0)$ case for binary netflows on $K_{n+1}$ extending the involution approach of Levande. Formulas for the $(q,1)$ case for positive integral netflows were given in \cite{AGHRS} and for integral flows in \cite{AW}. We generalize the known formulas for $\Ehr_{q,t}(\F_{K_{n+1}}(-n, 1, \ldots, 1))$ for $t=1,0,q^{-1}$ (as in equations \eqref{case:q11}, \eqref{case:q01}, and \eqref{case:qqinv1}) to a family of graphs called \emph{threshold graphs} \cite{TG} with arbitrary positive integral netflows. There are $2^n$ such graphs with $n+1$ vertices including the complete graph.
	
	We now summarize our main results. First we state the case $t=1$ for netflow $(-n,1,\ldots,1)$, which implies \eqref{case:q11} when $G$ is the complete graph.

	\begin{mainthmq1}
		Let $G$ be a threshold graph. Then
		\begin{equation*}
			\Ehr_{q,1}(\F_G(-n,1,\ldots,1)) = t_G(1,q) = \sum_T q^{\inv(T)},
		\end{equation*}
		where $t_G$ is the Tutte polynomial of $G$, and $T$ ranges over all spanning trees $T$ of $G$. 
	\end{mainthmq1}
	
	This relationship between $\Ehr_{q,1}(\F_G(-n, 1, \dots, 1))$ and the Tutte polynomial of $G$ extends to general positive flows as follows.
	
	\begin{mainthmq1a}
		For a connected threshold graph $G$ and $\b a \in \ZZ^n_{>0}$, let $\widetilde G$ be the multigraph obtained from $G$ by replacing each edge $(i,j)$ with $a_{\max\{i,j\}}$ parallel edges. Then \[\Ehr_{q,1}(\F_G(-\textstyle\sum_{i} a_i,a_1,\ldots,a_n)) = t_{\widetilde G}(1,q),\] where $t_{\widetilde G}$ is the Tutte polynomial of $\widetilde G$.
	\end{mainthmq1a}

	We also state the case $t=0$, which implies \eqref{case:q01} when $G$ is the complete graph and $\a = (1,\ldots,1)$.
	
	\begin{mainthmq0}
		Let $G$ be a threshold graph with degree sequence $(d_0,d_1,\ldots,d_n)$ and $\b a \in \ZZ^n_{>0}$. Then
		\begin{equation*}
			\Ehr_{q,0}(\F_G(-\textstyle\sum_{i} a_i,a_1,\ldots,a_n)) = \displaystyle\prod_{i=1}^n q^{\bar d_i(a_i-1)} [\bar{d}_i]_q,
		\end{equation*}
		where $\bar{d}_i = \min\{d_i,i\}$ is the number of vertices $j<i$ adjacent to $i$.
	\end{mainthmq0}

	Lastly, we state the case $t=q^{-1}$, which implies \eqref{case:qqinv1} when $G$ is the complete graph and $\a = (1,\ldots,1)$.

	\begin{mainthmqinv}
		Let $G$ be a threshold graph with degree sequence $(d_0,d_1,\ldots,d_n)$ and $\b a \in \ZZ^n_{>0}$. Then
		\[\operatorname {Ehr}_{q,q^{-1}}(\F_G(-\textstyle\sum_{i} a_i, a_1, \dots, a_n)) = q^{-F}\displaystyle\prod_{i=1}^n b_i(q),\]
	 	where $F = \sum_{i=1}^n \min\{d_i,i\} \cdot a_i -n$ and 	
	 	\[
		 	b_i(q) = \begin{cases}
		 		[(i+1)a_i+\sum_{j=i+1}^{d_i}a_j]_q&\text{if }d_i>i,\\
		 		[a_i]_{q^{i+1}}&\text{if }d_i = i,\\
		 		[a_i]_{q^{d_i+1}}[d_i]_q&\text{if }d_i  < i.
		 	\end{cases}
	 	\]
	\end{mainthmqinv}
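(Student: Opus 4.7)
I would proceed by induction on the number of vertices of the threshold graph $G$, exploiting the well-known recursive structure: every threshold graph on $\{0, 1, \ldots, n\}$ is obtained from a threshold graph on $\{0, 1, \ldots, n-1\}$ by adjoining vertex $n$ as either an isolated or a dominating vertex. Starting from $\Ehr_{q, q^{-1}}(\F_G(\vec a))$ written as a weighted sum over integer flows on $G$ (with the weight arising from the $(q,t)$-weight on Tesler-matrix-style data, specialized at $t = q^{-1}$, where it collapses to a Laurent polynomial in $q$), the inductive step analyzes how removing vertex $n$ affects this sum.

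Each integer flow on $G$ decomposes into its values on the edges incident to vertex $n$ together with its restriction to $G' = G - \{n\}$, which is itself a flow on the smaller threshold graph $G'$ with an appropriately modified netflow vector (obtained from $\vec a$ by redirecting the flow that would have passed through vertex $n$). The inner sum over flows on $G'$ is handled by the inductive hypothesis, yielding $q^{-F'} \prod_{i<n} b_i(q)$, and the outer sum over flow values on edges incident to vertex $n$ takes the form of a $q$-series sum
\[
\sum_{(f_{jn})_j \,:\, \sum_j f_{jn} = a_n} q^{\operatorname{wt}(f_{jn})} = q^{\delta_n} \cdot b_n(q),
\]
where $\delta_n$ combines with $F'$ to recover the exponent $F$. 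The three cases for $b_n(q)$ in the statement correspond to different structural configurations of vertex $n$ (and, more generally, of any vertex $i$ in the recursion), depending on whether $d_i > i$, $d_i = i$, or $d_i < i$. In the first two cases one expects a direct geometric-series or $q$-binomial computation.

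\textbf{Main obstacle.} The hardest case is $d_i < i$, where $b_i(q) = [a_i]_{q^{d_i+1}}[d_i]_q$. This two-factor product suggests a two-stage decomposition: an inner sum over distributions of the flow $a_i$ among the $d_i$ lower neighbors that produces the factor $[a_i]_{q^{d_i+1}}$, together with an additional factor $[d_i]_q$ that must come either from another part of the weight or from a reorganization of terms. Pinpointing the combinatorial origin of this $[d_i]_q$ factor, and carefully matching the cumulative $q^{-F}$ exponent as the induction proceeds, is the key technical step. Once this identity is established, the three cases glue into a single product formula by straightforward bookkeeping, and the special case $G = K_{n+1}$ with $\vec a = (1,\ldots,1)$ recovers \eqref{case:qqinv1}.
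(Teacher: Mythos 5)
There is a genuine gap in the proposed decomposition. You remove vertex $n$ entirely and sum over all distributions $B=(b_0,\dots,b_{\od_n-1})$ of $a_n$ among its lower neighbors, invoking the inductive hypothesis on $G'=G-\{n\}$ with the modified netflow $\b a'=\b a + B$. This works for the $(q,0)$ evaluation (and is exactly how the paper proves Theorem~\ref{thm:t=0}) because there the inductive formula depends on the netflow only through the single monomial $q^{\sum_i \od_i(a_i-1)}$, so the inner sum over the simplex factors out as a one-variable telescoping identity. At $t=q^{-1}$, however, the inductive answer $q^{-F'}\prod_{i<n} b_i'(q)$ depends on $\b a'$ through the \emph{arguments} of many $q$-integers simultaneously: perturbing $a_j$ by $b_j$ changes not only the factor $b_j'(q)$ but every factor $b_i'(q)$ with $d_i>i$ and $j\le d_i$, since those have argument $(i+1)a_i+\sum_{l=i+1}^{d_i}a_l$. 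The resulting sum over $B\in\Delta(\od_n,a_n)$ is a coupled multidimensional identity that does not reduce to "a direct geometric-series or $q$-binomial computation," and your plan gives no indication of how to evaluate it. Your stated "main obstacle" (locating the $[d_i]_q$ factor when $d_i<i$) is not the real difficulty; the real difficulty is this coupling.

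The paper avoids the coupling by inducting on $n$ \emph{and} $d_n$ and peeling off a single edge at a time: it performs a deletion--contraction-style case split on the flow $k=a_{n,m}$ along the edge from $n$ to its largest lower neighbor $m=d_n-1$ (the cases $k=0$, $k=a_n$, $0<k<a_n$ lead to $G''=G-(n,m)$ with netflow $\b a$, to $G'$ with $a_m\mapsto a_m+a_n$, and to $G''$ with $a_m\mapsto a_m+k$, $a_n\mapsto a_n-k$, respectively). The point is that only the two factors $b_n(q)$ and $b_m(q)$ differ among these terms, so the whole induction collapses to a one-variable $q$-series identity (Lemma~\ref{lemma q}), which is then verified by a rational generating function computation. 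That single-edge recurrence and the accompanying lemma are the essential content of the proof, and they are absent from your plan; without them (or a genuinely new evaluation of the multidimensional simplex sum), the induction does not close.
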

	
	As a corollary, we prove a conjecture of Armstrong et al.\ \cite{AGHRS} about the $(q, q^{-1})$-Ehrhart function of the flow polytope of a complete graph with an arbitrary netflow vector. The case $a_1=\cdots =a_n=1$ gives \eqref{case:qqinv1}.
	
	\begin{corqinv}
		For positive integers $a_1,\ldots,a_n$ we have that 
		\[
			\Ehr_{q,q^{-1}}(\F_{K_{n+1}}(-\textstyle\sum_{i} a_i,a_1,\ldots,a_n)) =
			q^{n-\sum_{i=1}^n ia_i} \displaystyle\prod_{i=1}^{n-1} [ (i+1)a_i +
			a_{i+1}+a_{i+2}+\cdots + a_n ]_q.
		\] 
	\end{corqinv}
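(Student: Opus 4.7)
The plan is to obtain Corollary~\ref{cor:q-1} as a direct specialization of Theorem~\ref{thm:q-1} to the complete graph $G = K_{n+1}$. The key observations are that $K_{n+1}$ is a threshold graph on vertex set $\{0, 1, \ldots, n\}$, built by successively adding each new vertex as a dominating one, and that its degree sequence is uniform, $d_0 = d_1 = \cdots = d_n = n$.

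Substituting into Theorem~\ref{thm:q-1}, since $\min\{d_i, i\} = \min\{n, i\} = i$ for each $1 \leq i \leq n$, the exponent becomes $F = \sum_{i=1}^n i a_i - n$, so the prefactor $q^{-F} = q^{n - \sum_{i=1}^n i a_i}$ matches that of the corollary. For each $1 \leq i \leq n-1$ one has $d_i = n > i$, placing us in case 1 of the piecewise definition of $b_i(q)$ and giving
\[
b_i(q) = [(i+1) a_i + a_{i+1} + a_{i+2} + \cdots + a_n]_q,
\]
which is precisely the $i$-th factor in the product appearing in the corollary.

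The one remaining contribution is $b_n(q)$, coming from the top vertex $i = n$, where $d_n = n = i$ places us in case 2 of the theorem. The main point requiring care is verifying that this boundary contribution simplifies appropriately and combines with $\prod_{i=1}^{n-1} b_i(q)$ to yield the displayed formula of the corollary. Once the boundary term has been reconciled, assembling the pieces yields the corollary. As a consistency check, setting $a_1 = \cdots = a_n = 1$ recovers $q^{-\binom{n}{2}} [n+1]_q^{n-1}$, matching \eqref{case:qqinv1}, so the argument is consistent with the previously known classical case.
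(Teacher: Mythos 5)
Your approach is the same as the paper's: Corollary~\ref{cor:q-1} is obtained by specializing Theorem~\ref{thm:q-1} to $G=K_{n+1}$, and your computations of $d_i=n$, $\od_i=\min\{d_i,i\}=i$, $F=\sum_{i=1}^n ia_i-n$, and $b_i(q)=[(i+1)a_i+a_{i+1}+\cdots+a_n]_q$ for $1\le i\le n-1$ are all correct. The gap is exactly at the step you defer. The boundary term $b_n(q)=[a_n]_{q^{n+1}}$ does not ``simplify appropriately and combine'' with the other factors --- it is a genuine extra multiplicative factor, and the theorem specializes to
\[
\Ehr_{q,q^{-1}}(\F_{K_{n+1}}(\b a)) \;=\; q^{\,n-\sum_{i=1}^n ia_i}\,[a_n]_{q^{n+1}}\prod_{i=1}^{n-1}\bigl[(i+1)a_i+a_{i+1}+\cdots+a_n\bigr]_q,
\]
which is how the corollary is stated in Section~\ref{sec:qq^{-1}}. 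The display you are trying to prove (the introduction's version) omits the factor $[a_n]_{q^{n+1}}$ and is in fact false whenever $a_n>1$: for $n=1$ the left side is $wt_{q,q^{-1}}(a_1)=q^{1-a_1}[a_1]_{q^2}$, not $q^{1-a_1}$; and for $n=2$, $\b a=(1,2)$, summing the weights of the three lattice flows of $\F_{K_3}(-3,1,2)$ gives $q^{-3}+q^{-2}+q^{-1}+2+q+q^2+q^3=q^{-3}[2]_{q^3}[4]_q$, not $q^{-3}[4]_q$. Your consistency check at $a_1=\cdots=a_n=1$ cannot detect this because $[1]_{q^{n+1}}=1$.

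So the fix is simple but necessary: record $b_n(q)=[a_n]_{q^{n+1}}$ as the $n$-th factor and prove the corrected identity (the Section~\ref{sec:qq^{-1}} form); as written, the ``reconciliation'' you promise cannot be carried out, because no identity makes $[a_n]_{q^{n+1}}$ disappear for general $a_n$.
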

	
	Our proofs are self-contained and inductive on the netflow of the flow polytope without using machinery from symmetric functions. In the case $(q,0)$ we do not use involutions like Levande in \cite{PL} and Wilson in \cite{AW}. 
	
	The outline of this paper is as follows. In Section \ref{sec:preliminaries} we give the definitions of flow polytopes, $(q,t)$-Ehrhart functions, and threshold graphs. In Section \ref{sec:q1} we calculate $\Ehr_{q,1}(\cdot)$ for flow polytopes of threshold graphs, while in Section \ref{sec:q0} we do the same for the evaluation $(q,0)$. In Section \ref{sec:qq^{-1}} we calculate the evaluation $(q,q^{-1})$ thereby also proving Conjecture 7.1 of Armstrong et al.\ in \cite{AGHRS}. We conclude in Section \ref{sec:conj} with positivity conjectures regarding the general $(q,t)$ case of flow polytopes of threshold graphs.
 
\section{Preliminaries}
\label{sec:preliminaries}

	In this section, we give some background and preliminary results about flow polytopes and threshold graphs.

	\subsection{Flow polytopes and their $(q,t)$-Ehrhart functions}
	
	We first discuss flow polytopes and define the $(q,t)$-Ehrhart functions.
	\begin{definition}
		Let $G=(V,E)$ be an acyclic directed graph on $V = \{0, 1, \dots, n\}$, and let $\b a \in \ZZ^{n+1}$. Let $A_G$ be the $n \times |E|$ matrix with columns $e_i-e_j$ for each directed edge $(i,j)$. Then the \emph{flow polytope} $\F_G(\b a) \subset \RR_{\geq 0}^{E}$ is defined to be
		\[\F_G(\b a) = \{x \in \RR_{\geq 0}^{E} \mid A_G \cdot x = \b a\}.\]
	\end{definition}
	In other words, the flow polytope is the set of all nonnegative flows that can be placed on the edges of $G$ such that the net flow at each vertex is given by $\b a$. By convention, we  orient the edges from $i$ to $j$ if $i>j$. Since the sum of the entries of $\b a$ must be 0 for the flow polytope to be nonempty, we will abuse notation and write, for $\b a = (a_1, \dots, a_n) \in \ZZ^n$, $\F_G(\b a) = \F_G(-\sum a_i, a_1, \dots, a_n)$. We also abbreviate $\F_G = \F_G(-n, 1, 1, \dots, 1)$.
	
\begin{remark} \label{rem:flow2Tesler}
		A \emph{Tesler matrix} is an $n\times n$ upper triangular matrix $B=(b_{i,j})_{1\leq i\leq j \leq n}$ with nonnegative integer entries satisfying for $k=1,\ldots,n$,
		\[
			b_{k,k} + b_{k,k+1} + \cdots + b_{k,n} - (b_{1,k}+b_{2,k}+\cdots + b_{k-1,k})=1.
		\]
		These matrices first appeared in Haglund's study of $DH_n$ \cite{Hag}. By an observation in \cite{tesler}, these matrices are in correspondence with integral flows on $K_{n+1}$ with netflow $(-n,1,1,\ldots,1)$. With the conventions on $\F_G$ in this paper, the correspondence is as follows: an integral flow $A=(a_{ij})_{0\leq j <i \leq n}$  in $\F_{K_{n+1}}$ corresponds to the Tesler matrix $B=(b_{ij})_{1\leq i \leq j \leq n}$ where
		\[
			b_{ij} = \begin{cases}
				a_{n+1-j,0} & \text{ if } i=j,\\
				a_{n+1-i,n+1-j}  & \text{ if } i<j.
			\end{cases}
		\]
		For example, for $n=4$ the correspondence is the following:
		\begin{center}
			\includegraphics{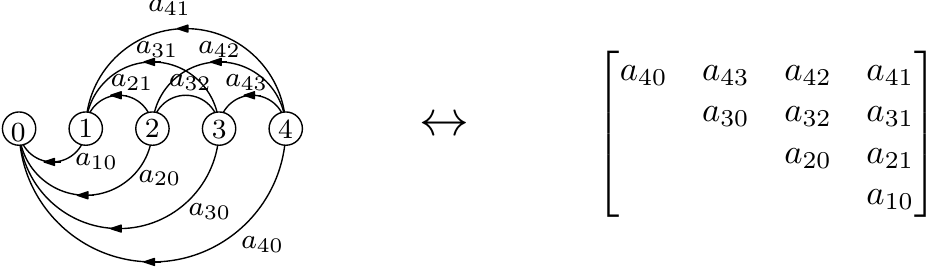}
		\end{center}
		This correspondence can be extended to integral flows on subgraphs $G$ of $K_{n+1}$ by setting the entries corresponding to missing edges of $G$ to zero.
	\end{remark}

	For any nonnegative integer $b$, define the \emph{$(q,t)$-weight}
	\begin{equation} \label{eq:wt}
		wt_{q,t}(b) = \begin{cases}
			\frac{q^b-t^b}{q-t} &\text{ if } b>0,\\
			1 &\text{ if } b=0.
		\end{cases}
	\end{equation}
	For a lattice point $A = (a_{ij}) \in \RR_{\geq 0}^{E}$ with nonnegative entries, define
	\[
		wt_{q,t}(A) = \left(-(1-t)(1-q)\right)^{\#\{a_{ij} > 0\}-n} \cdot \prod_{i,j} wt_{q,t}(a_{ij}),
	\]
	where $\#\{a_{ij}>0\}$ denotes the number of nonzero entries of $A$. Finally, for an integer polytope $\F_G(\a) \subset \RR_{\geq 0}^{E}$, define the \emph{$(q,t)$-weighted Ehrhart function}
	\[\Ehr_{q,t}(\F_G(\b a)) = \sum_{A \in \F_G(\b a) \cap \ZZ^E} wt_{q,t}(A).\]
	Note that if $\b a \in \ZZ_{>0}^n$, then any $A \in \F_G(\b a)$ will have at least $n$ nonzero entries, so $wt_{q,t}(A)$ and hence $\Ehr_{q,t}(\F_G(\b a))$ will be \textit{polynomials} in $q$ and $t$. Moreover, this polynomial by construction is \textit{symmetric} in $q$ and $t$. There is no guarantee, however, that $\Ehr_{q,t}(\F_G(\b a))$ will have nonnegative coefficients, and indeed it will not for general graphs $G$ as illustrated in the next example. 
	
	\begin{example} \label{ex:negqtGraph}
		If $G=K_5\setminus \{(3,4)\}$, then there are $15$ integer flows on $G$ and one can check that 
		\[
			\Ehr_{q,t}(\F_{G}(-4,1,1,1,1)) = q^3t + 2q^2t^2 + qt^3 - 3q^3 - 5q^2t - 5qt^2 - 3t^3 - 5q^2 - 8qt - 5t^2 - 3q - 3t - 1.
		\]
	\end{example}

	\subsection{Threshold graphs} \label{subsec:tg}
	
	We now define threshold graphs, a class of graphs of importance in computer science and optimization. For more information, see \cite{TG} and \cite[Ex. 5.4]{EC}.
	
	\begin{definition}
		A \emph{threshold graph} $G$ is a graph that can be constructed recursively starting from one vertex and no edges by repeatedly carrying out one of the following two steps:
		\begin{itemize}
			\item add a dominating vertex: a vertex that is connected to every other existing vertex; 
			\item add an isolated vertex: a vertex that is not connected to any other existing vertex. 
		\end{itemize}
		We say that a threshold graph $G$ is labeled by \emph{reverse degree sequence} if its vertices are labeled by $0, \dots, n$ in such a way that $d_i \geq  d_j$ for each pair of vertices $i < j$, where $d_i$ is the degree of vertex $i$.
	\end{definition}

	This family of graphs includes the complete graph and the star graph but excludes paths or cycles of $4$ or more vertices. There are $2^{n-1}$ threshold graphs with $n$ unlabeled vertices. The number $t(n)$ of threshold graphs with vertex set $[n]$ has exponential generating function $e^x(1-x)/(2-e^x)$, and $t(n) \sim n!(1-\log(2))/\log(2)^{n+1}$ (e.g. see \cite[\href{http://oeis.org/A005840}{A005840}]{OEIS}). A threshold graph is uniquely determined up to isomorphism by its degree sequence $d(G)=(d_0, d_1, \dots, d_n)$. By convention, we will assume that all our threshold graphs are labeled by reverse degree sequence and that the edges are directed from $i$ to $j$ if $i>j$.

	Alternatively, a graph $G$ is a threshold graph if there exist real weights $w_i$ for each vertex $i = 0, \dots, n$ and a threshold value $t$ such that $i$ and $j$ are adjacent if and only if $w_i + w_j > t$. If the vertices are labeled such that $w_0 > w_1 > \cdots > w_n$, then $G$ is labeled by reverse degree sequence. Note that if $i$ and $j$ are adjacent in $G$, then so are $i'$ and $j'$ for any $i' \leq i$ and $j' \leq j$ (provided $i' \neq j'$).
	
	\begin{remark} \label{rem:tglabeling}
		A threshold graph with $n+1$ vertices can be encoded by a binary sequence $(\beta_0,\ldots,\beta_{n-1}) \in \{0,1\}^n$ where $\beta_i=1$ or $0$ depending on whether vertex $i$ is a dominating or an isolated vertex with respect to vertices $i+1,\ldots,n$. In this labeling, if $i$ and $j$ are adjacent with $i<j$, then $d_i  \geq d_j$ since all vertices at least $i$ are adjacent to $i$ and all vertices smaller than $i$ are either adjacent to both $j$ and $i$ or to neither. Hence when we relabel the vertices by reverse degree sequence the orientation of the edges is preserved. Thus if $G=G(\beta)$ is a threshold graph with the labeling induced from $\beta$, and $G'$ is the graph relabeled by reverse degree sequence, then
		\[
			\Ehr_{q,t}(\F_{G(\beta)}(\a)) \,=\, \Ehr_{q,t}(\F_{G'}(\a')),
		\]
		where $\a'$ is obtained by permuting $\a$ according to the relabeling of the vertices. In the case when the graph is connected ($\beta_0=1$) and $\a=(-n,1,1,\ldots,1)$, then $\a=\a'$ and the equation above becomes
		\[
			\Ehr_{q,t}(\F_{G(\beta)}) \,=\, \Ehr_{q,t}(\F_{G'}).
		\]
		Using the correspondence between integral flows on graphs and Tesler matrices in Remark~\ref{rem:flow2Tesler}, the $n\times n$ matrices corresponding to the flows on threshold graph $G(\beta)$ have zero entries above the diagonal in column $i+1$ if $\beta_i=0$. 
	\end{remark}

	\begin{example}
		The threshold graph $G(1,0,1,0)$ corresponds to the graph $G'$ with reverse degree sequence $(4,3,2,2,1)$. The map between integral flows on $G(1,0,1,0)$ and $G'$ and Tesler matrices is the following:
		\begin{center}
			\includegraphics{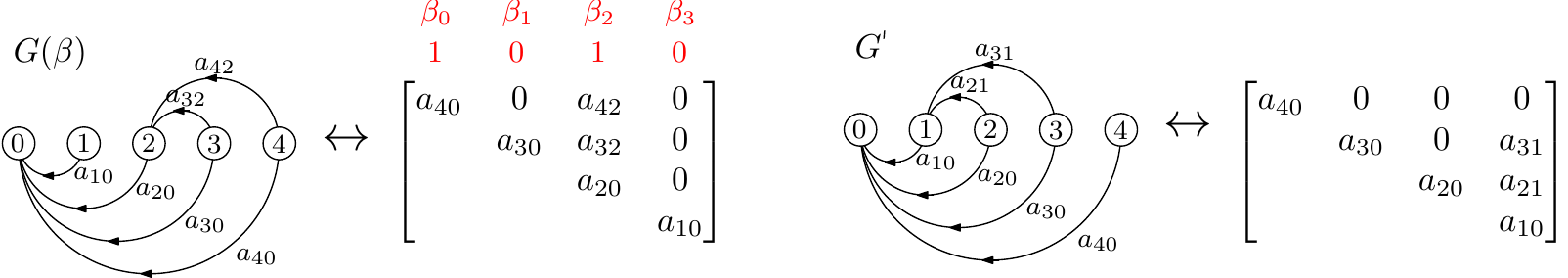}
		\end{center}
	\end{example}

\section{Calculating the $(q,1)$-Ehrhart function}
\label{sec:q1}
	
	In this section, we give a combinatorial formula for the weighted Ehrhart function of the flow polytope $\F_G(\b a)$ when $G$ is a threshold graph and $t=1$. We note that one such proof when $G$ is the complete graph was given by Wilson \cite[\S 6]{AW}. In particular, it will follow that when $q=t=1$, the weighted Ehrhart function evaluates to the number of spanning trees of $G$, or equivalently, to the number of $G$-parking functions.
	
	To begin, we will need some background about spanning trees, inversions, and parking functions, particularly in relation to threshold graphs.
	
	\subsection{Spanning trees and inversions}
	
	One important statistic on spanning trees is the number of inversions. The related notion of $\kappa$-inversions is due to \cite{gessel}. We define both these notions below.
	
	\begin{definition}
		Let $G$ be a graph on $0, 1, \dots, n$, and let $T$ be a spanning tree of $G$ rooted at $r$. We say that $v$ is a \emph{descendant} of $u$ if $u$ lies on the unique path from $r$ to $v$ in $T$. We say $u$ is the \emph{parent} of a vertex $v$ if $v$ is a descendant of $u$ in $T$, and $u$ and $v$ are adjacent in $G$.
	\end{definition}
		
	\begin{definition}
		An \emph{inversion} of $G$ is a pair of vertices $(i,j)$ with $r \neq i > j$ such that $j$ is a descendant of $i$. A \emph{$\kappa$-inversion} of $G$ is an inversion $(i,j)$ such that $j$ is adjacent to the parent of $i$ in $G$. We denote the number of inversions of $T$ by $\inv(T)$ and the number of $\kappa$-inversions by $\kappa(T)$.
	\end{definition}
	
	We will assume our trees are rooted at $r=0$ unless otherwise indicated.
	
	We also briefly recall the definition of the Tutte polynomial of a graph.
	\begin{definition}
		Let $G = (V,E)$ be a multigraph. The \emph{Tutte polynomial} of $G$ is defined by
		\[t_G(x,y) = \sum_{A \subset E} (x-1)^{k(A)-k(E)} (y-1)^{k(A)+|A|-|V|},\]
		where $k(A)$ denotes the number of connected components in the graph $(V,A)$.
	\end{definition}
 
	Define the \emph{inversion enumerator} of $G$ to be
	\[I_G(q) = \sum_{T} q^{\kappa(T)},\]
	where $T$ ranges over all spanning trees of $G$. Gessel shows in \cite{gessel} that $I_G(q)$ has the following properties.
	
	\begin{theorem} \label{thm:gessel} \cite{gessel}
		Let $G$ be a graph on $0, 1, \dots, n$.
		\begin{enumerate}[(a)]
			\item The polynomial $I_G(q)$ does not depend on the labeling of $G$.  In fact, $I_G(q) = t_G(1,q)$.
			\item For any vertex $i \neq 0$, let $\delta_{T, G}(i)$ be the number of descendants of $i$ in $T$ (including $i$ itself) that are adjacent in $G$ to the parent of $i$. Then
			\[I_G(q) = \sum_{T \colon \kappa(T) = 0} \prod_{i=1}^n [\delta_{T,G}(i)]_q,\]
			where the sum ranges over all spanning trees $T$ for which $\kappa(T) = 0$.
		\end{enumerate}
	\end{theorem}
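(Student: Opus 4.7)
For part (a), we argue by deletion-contraction induction on the edge set. At $(x,y) = (1,q)$ Tutte's recursion reads $t_G(1,q) = t_{G\setminus e}(1,q) + t_{G/e}(1,q)$ for any non-loop, non-bridge edge $e$, and $t_G(1,q) = t_{G/e}(1,q)$ for a bridge, with the obvious base case (a single vertex). We mirror this on the tree side by choosing $e = (n, v)$ incident to the maximum-labeled vertex $n$ and partitioning the spanning trees of $G$ according to whether they contain $e$. Trees avoiding $e$ are exactly the spanning trees of $G\setminus e$ with unchanged $\kappa$-statistic; trees containing $e$ correspond naturally to spanning trees of $G/e$, and here the fact that $e$ is incident to the top vertex is used to ensure that no $\kappa$-inversion is created or destroyed under contraction. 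The labeling-independence assertion is then deduced by reducing to a single adjacent transposition of labels and producing an explicit $\kappa$-preserving involution on spanning trees.

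For part (b), the plan is to construct a canonical reduction map $\Phi$ from all spanning trees of $G$ to those with $\kappa = 0$, and to show that its fibers are enumerated exactly by the product $\prod_i [\delta_{T_0,G}(i)]_q$. Given $T$, process vertices $i = 1, 2, \dots, n$ in order; at each $i$ let $D_i(T)$ be the set of descendants of $i$ in $T$ (including $i$) that are adjacent in $G$ to the parent $p_T(i)$. If $i \neq \min D_i(T)$, reverse the path from $i$ to $\min D_i(T)$ inside the subtree rooted at $i$, so that $\min D_i(T)$ becomes the new child of $p_T(i)$. Iterating produces $\Phi(T)$ with $\kappa(\Phi(T)) = 0$. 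Running the construction backwards, a preimage of $T_0$ is described by choosing, for each $i$, which element of $D_i(T_0)$ to promote to the child slot; promoting an element of rank $r$ within $D_i(T_0)$ adds exactly $r-1$ new $\kappa$-inversions. Summing $q^{r-1}$ over all choices at vertex $i$ yields the factor $[\delta_{T_0,G}(i)]_q$, and multiplying over $i$ gives the claimed product.

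The main obstacle is verifying that $\Phi$ is well-defined and that the local $\kappa$-count in part (b) is exactly as advertised: reversing a root-path changes many descendant/parent incidences at once, so one must confirm that the only net change to $\kappa$ is the predicted contribution at the single vertex currently being processed, with no spillover to vertices already handled or still to come. We would address this by induction on the total displacement $\sum_i (\text{rank of } i \text{ in } D_i(T))$ and a careful case analysis of how path-reversal alters the ancestry relation and parent-adjacency conditions in $G$; threshold-like monotonicity of adjacency is not needed here, since the statement holds for arbitrary $G$, so the argument must be purely local.
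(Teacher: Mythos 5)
The paper does not actually prove this theorem---it is quoted from Gessel, and the closest the paper comes to reproving it is the adaptation of Gessel's method in Proof 1 of Theorem~\ref{thm:q1a}, which works with the generating function $c_G(q)=\sum_H q^{|E(H)|}$ over connected spanning subgraphs and a decomposition obtained by deleting a root vertex, not with deletion--contraction of a single edge. Measured on its own terms, your proposal has concrete gaps in both parts. In part (a), the deletion step claims that trees avoiding $e=(n,v)$ have ``unchanged $\kappa$-statistic'' as spanning trees of $G\setminus e$; this is false unless $v$ is chosen as the largest neighbor of $n$ (which you do not specify): in $K_4$ with $e=(3,1)$ and the path $0\!-\!3\!-\!2\!-\!1$, the pair $(2,1)$ is a $\kappa$-inversion in $K_4$ but not in $K_4\setminus e$, since $1$ is adjacent to $p_T(2)=3$ only through $e$. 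The contraction step is worse: the claim that no $\kappa$-inversion is created or destroyed is simply not true. In $K_4$ with $e=(3,2)$ and the path $0\!-\!1\!-\!3\!-\!2$, the unique $\kappa$-inversion is $(3,2)$ itself, and it disappears upon contracting $e$; since the Tutte recursion at $(1,q)$ carries no compensating power of $q$, a tree-by-tree $\kappa$-preserving correspondence of the kind you describe cannot exist. You would also need to define $I_G$ for the multigraphs produced by contraction (parallel edges are exactly what generate the powers of $q$ in $t_{G/e}(1,q)$), which the $\kappa$-inversion statistic as defined does not handle. The labeling-independence argument via adjacent transpositions is only asserted, not constructed.

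In part (b), the construction of the reduction map $\Phi$ by iterated path reversal is plausible in outline, but the entire content of the statement is the claim you explicitly defer: that promoting the rank-$r$ element of $D_i(T_0)$ changes $\kappa$ by exactly $r-1$ with no spillover to other vertices. Reversing the path from $i$ to $\min D_i(T)$ changes the parent of every vertex on that path, and hence changes the adjacency condition ``$j$ is adjacent in $G$ to the parent of $i'$'' for many pairs $(i',j)$ at once; for an arbitrary graph $G$ there is no monotonicity to control this, and you give no argument that the changes cancel. Acknowledging that ``one must confirm'' this and that you ``would address it by induction and careful case analysis'' is a statement of the problem, not a proof. As it stands, neither part is established; you would be better served either by reproducing Gessel's subgraph-decomposition argument (as the paper does for Theorem~\ref{thm:q1a}(a)) or by routing part (a) through Merino's theorem and the bijection of Perkinson--Yang--Yu cited in the paper.
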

	
	Here we use the standard notation $[k]_q = 1+q+q^2+ \dots + q^{k-1} = \frac{q^k-1}{q-1}$. In the case when $G$ is a threshold graph, these results specialize as follows. Call a spanning tree of $G$ \emph{increasing} if it has no inversions.
	
	\begin{proposition} \label{prop:threshinv}
		Let $G$ be a threshold graph (labeled by reverse degree sequence). Then 
		\[	
			I_G(q) = \sum_T q^{\inv(T)}
			= \sum_{T \text{ increasing}} \prod_{i=1}^n [\delta_{T}(i)]_q,
		\]
		where $\delta_T(i)$ is the number of descendants of $i$ in $T$ (including $i$ itself).
	\end{proposition}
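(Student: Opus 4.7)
The plan is to specialize Gessel's Theorem~\ref{thm:gessel} to the threshold-graph setting. I would first prove two structural claims about a threshold graph $G$ labeled by reverse degree sequence: (A) every inversion of any spanning tree $T$ of $G$ is also a $\kappa$-inversion, so $\kappa(T)=\inv(T)$; and (B) for every increasing spanning tree $T$ of $G$ and every non-root vertex $i$, each descendant of $i$ in $T$ is adjacent in $G$ to the parent of $i$, so $\delta_{T,G}(i)=\delta_T(i)$. Granting (A) and (B), the first equality $I_G(q)=\sum_T q^{\inv(T)}$ follows from Theorem~\ref{thm:gessel}(a), and the second follows from Theorem~\ref{thm:gessel}(b) after rephrasing the condition $\kappa(T)=0$ as ``$T$ increasing'' via (A) and replacing $\delta_{T,G}(i)$ by $\delta_T(i)$ via (B).

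Both claims rest on the nested-neighborhood property recalled in Section~\ref{subsec:tg}: if $\{u,w\}\in E$ and $u'\leq u$, $w'\leq w$ (with $u'\neq w'$), then $\{u',w'\}\in E$; equivalently, any pair whose sorted form is componentwise $\leq$ a sorted edge is itself an edge. For (A), let $(i,j)$ be an inversion (so $i>j$ and $j$ is a descendant of $i$) and let $p$ be the parent of $i$, so that $(p,i)\in E$. A short case split on the relative order of $p,i,j$ (the three configurations $p<j<i$, $j<p<i$, and $j<i<p$) verifies in every case that the sorted pair $\{\min(p,j),\max(p,j)\}$ is componentwise $\leq$ the sorted pair $\{\min(p,i),\max(p,i)\}$, so the property yields $(p,j)\in E$ and hence $(i,j)$ is a $\kappa$-inversion.

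For (B), let $T$ be increasing, $i$ a non-root vertex with parent $p$, and $v$ a descendant of $i$. Since labels strictly increase along root-to-vertex paths in $T$, we have $p<i$ and $v\geq i$, and more generally $\min(u_l,u_{l+1})\geq i>p$ for any two consecutive vertices on the tree path from $i$ to $v$. I would proceed by induction on the tree-distance from $i$ to $v$: the base case $v=i$ is the given edge $(p,i)\in E$, and for the inductive step, applying the domination property to the tree edge $(u_{k-1},u_k)$ with $u_k=v$ forces $(p,u_k)\in E$. I expect the only delicate bookkeeping to be the case analysis in (A); once both claims are established, Theorem~\ref{thm:gessel} delivers both equalities of the proposition.
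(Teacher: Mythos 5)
Your proposal is correct and follows essentially the same route as the paper: both reduce to Theorem~\ref{thm:gessel} via the two facts that in a threshold graph every inversion is a $\kappa$-inversion and that $\delta_{T,G}(i)=\delta_T(i)$ on increasing trees, each a consequence of the nested-neighborhood property. Your case split for (A) and path-induction for (B) are just more explicit versions of the paper's one-line arguments (the paper notes directly that any neighbor of $i$ is a neighbor of $j<i$, and compares the parent of $i$ with the parent of $j$).
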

	\begin{proof}
		For a spanning tree of a threshold graph, any inversion $(i,j)$ is a $\kappa$-inversion: $j<i$ implies that any vertex adjacent to $i$ is also adjacent to $j$ in $G$, particularly the parent of $i$ (or see \cite[Proposition 10]{pyy}).
		
		For the second equality, if $j$ is a descendant of $i$, then the parent of $j$ is a descendant of the parent of $i$. Thus in an increasing tree, the parent of $i$ is at most the parent of $j$, so since $j$ is adjacent to the latter, it must also be adjacent to the former in $G$. The result then follows from Theorem~\ref{thm:gessel}.
	\end{proof}

	\subsection{Parking functions}
	
	The following notion of a $G$-parking function due to Postnikov and Shapiro \cite{postnikov-shapiro} generalizes the usual notion of parking function (the latter corresponds to the complete graph). They are also  called \emph{superstable configurations}  in the context of chip-firing.
	
	\begin{definition}
		Let $G = (V,E)$ be a graph on $V = \{0, 1, \dots, n\}$. A \emph{$G$-parking function} is a function $P \colon [n] \to \ZZ_{\geq 0}$ such that, for every nonempty set $S \subset [n]$, there exists $i \in S$ such that $P(i)$ is less than the number of vertices $j \notin S$ adjacent to $i$.
		
		The \emph{degree} of a parking function $P$ is defined to be $\deg P = \sum_{i=1}^n P(i)$. The \emph{codegree} of a parking function $P$ is $\codeg P = g-\deg P$, where $g=|E|-|V|+1$.
	\end{definition}
	
	When $G$ is the complete graph, $P$ is a parking function if and only if, for $k = 1, 2, \dots, n$, there are at least $k$ vertices $i$ such that $P(i) < k$. In the context of ordinary parking functions on the complete graph, the $\codeg$ statistic is usually referred to as $\area$.
	
	In general, $G$-parking functions are in bijection with the spanning trees of $G$. Merino \cite{merino} showed the following relationship (in the context of chip-firing) between parking functions and the Tutte polynomial of $G$.
	
	\begin{theorem}\cite{merino}
		Let $G$ be a graph. Then \[t_G(1, y) = \sum_P y^{\codeg P},\] where the sum ranges over all $G$-parking functions $P$.
	\end{theorem}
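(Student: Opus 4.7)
The plan is to combine Tutte's activities expansion of the Tutte polynomial with a statistic-preserving bijection between spanning trees and $G$-parking functions.

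First, fix a total order on the edges of $G$. Tutte's classical activities expansion gives
\[t_G(x,y) = \sum_T x^{\operatorname{ia}(T)} y^{\operatorname{ea}(T)},\]
where $T$ ranges over spanning trees of $G$ and $\operatorname{ia}(T)$, $\operatorname{ea}(T)$ denote the internal and external activities of $T$ with respect to the fixed order. Specializing $x=1$ yields $t_G(1,y) = \sum_T y^{\operatorname{ea}(T)}$, so it suffices to produce a bijection $\Phi$ from spanning trees to $G$-parking functions such that $\operatorname{ea}(T) = \codeg(\Phi(T))$ for every spanning tree $T$.

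A natural candidate is the bijection arising from Dhar's burning algorithm in the theory of sandpiles (equivalently, the Cori--Le Borgne bijection). Given a $G$-parking function $P$, one starts at the sink $0$ and iteratively burns each non-burned vertex $i$ as soon as the number of its already-burned neighbors exceeds $P(i)$; recording, for each non-sink vertex, the first edge that triggers its burning yields a spanning tree $T = \Phi^{-1}(P)$ rooted at the sink. The inverse reads off $P$ from $T$ by counting, at each vertex $i$, the non-tree edges at $i$ that precede in the edge order the edge of $T$ ending at $i$. Well-definedness (the output is always a parking function) and bijectivity follow from the standard analysis of stable and superstable configurations.

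The heart of the proof is the identity $\operatorname{ea}(T) = \codeg(\Phi(T)) = g - \deg \Phi(T)$, where $g = |E| - n$ is the cyclomatic number. This reduces to a bookkeeping claim on the $g$ non-tree edges: each non-tree edge contributes either $1$ to $\operatorname{ea}(T)$ or $1$ to $\deg \Phi(T)$, never to both, and together these contributions account for all $g$ non-tree edges. The dichotomy is governed precisely by whether the non-tree edge is minimal in its fundamental cycle relative to the fixed edge order. Verifying this dichotomy rigorously, by tracking how the burning algorithm interacts with the edge ordering on each fundamental cycle, is the main obstacle; once it is established, summing over all spanning trees delivers the desired identity $t_G(1,y) = \sum_P y^{\codeg P}$.
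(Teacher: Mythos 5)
The paper offers no proof of this statement---it is quoted from Merino---so there is nothing in-text to compare against; what matters is whether your argument stands on its own. Your reduction is a reasonable and standard strategy: quote the activity expansion $t_G(1,y)=\sum_T y^{ea(T)}$ and then exhibit a bijection $\Phi$ from spanning trees to $G$-parking functions with $ea(T)=\codeg(\Phi(T))$. This is essentially the Cori--Le Borgne route, and it is close in spirit to the DFS-burning bijection of Perkinson, Yang and Yu that the paper cites immediately after stating the theorem (there in the form $\kappa(T)\mapsto\codeg(P)$, which combined with Gessel's identity $t_G(1,q)=\sum_T q^{\kappa(T)}$ gives another complete proof).

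However, as written there is a genuine gap, and you flag it yourself: the identity $ea(T)=\codeg(\Phi(T))$ is asserted, not proven, and once the activity expansion is quoted this identity \emph{is} the theorem. Moreover, your setup does not yet make the claim well-posed. You fix an arbitrary total order on $E$, but your burning rule (burn $i$ once its number of burned neighbors exceeds $P(i)$) does not say in which order eligible vertices are processed, so ``the first edge that triggers the burning of $i$'' is not well defined and different tie-breakings produce different trees. More seriously, the edge-by-edge dichotomy---each non-tree edge counted by $ea(T)$ or by $\deg\Phi(T)$ according to whether it is minimal in its fundamental cycle---only holds when the order used to define activities is coordinated with the order in which the algorithm burns vertices and edges; for a generic pairing of an edge order with a burning schedule the dichotomy fails edge by edge, and only the aggregate identity (the thing being proved) survives. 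To close the gap you must either (i) fix a specific processing order tied to the edge order, as Cori and Le Borgne do, and then prove the dichotomy by analyzing, for each non-tree edge $e=\{u,v\}$ with $v$ burned later, the position of $e$ relative to the tree edge of $v$ in both the edge order and the burning order; or (ii) bypass activities entirely and verify that $\sum_P y^{\codeg P}$ satisfies the deletion--contraction recurrence for $t_G(1,y)$, which is Merino's original chip-firing argument; or (iii) invoke the Gessel and Perkinson--Yang--Yu results mentioned above.
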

	
	In light of Gessel's results on the inversion enumerator of $G$, it follows that the $\kappa$-inversion statistic on spanning trees of $G$ has the same distribution as the codegree statistic on $G$-parking functions. (This was noted in the case of the complete graph by Kreweras \cite{kreweras}.) The authors of \cite{pyy} give an explicit bijection (called the \emph{DFS-burning algorithm}) between spanning trees $T$ and $G$-parking functions $P$ that sends $\kappa(T)$ to $\codeg (P)$. If $G$ is a threshold graph, then this bijection sends $\inv(T)$ to $\codeg(P)$.	

	\subsection{Relation to the Ehrhart function}
	
	We are now ready to state the main result of this section.
	
	\begin{theorem} \label{thm:t=1}
		Let $G$ be a threshold graph. Then
		\[\operatorname {Ehr}_{q, 1}(\F_G) = t_G(1,q) = I_G(q) = \sum_{T} q^{\inv(T)} = \sum_{P} q^{\codeg(P)},\]
		where $T$ ranges over all spanning trees of $G$, and $P$ ranges over all $G$-parking functions.
	\end{theorem}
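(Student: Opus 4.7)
The plan is to establish the core identity $\Ehr_{q,1}(\F_G) = \sum_{T \text{ increasing}} \prod_{i=1}^n [\delta_T(i)]_q$ directly; the remaining equalities in the theorem statement then follow from results already available. Indeed, Proposition~\ref{prop:threshinv} identifies this sum with $I_G(q) = \sum_T q^{\inv(T)}$, Theorem~\ref{thm:gessel}(a) gives $I_G(q) = t_G(1,q)$, and Merino's theorem provides $t_G(1,q) = \sum_P q^{\codeg(P)}$.

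The first step is to observe that the $(q,1)$-weight of a flow simplifies drastically. For $b>0$ we have $wt_{q,1}(b) = [b]_q$, while the prefactor $\bigl(-(1-t)(1-q)\bigr)^{\#\{a_{ij}>0\}-n}$ specializes at $t=1$ to $0^{\#\{a_{ij}>0\}-n}$. This is $1$ when $\#\{a_{ij}>0\} = n$ and $0$ when $\#\{a_{ij}>0\} > n$. Every integer flow $A \in \F_G$ has at least $n$ positive entries: each vertex $i\geq 1$ has netflow $+1$, so it must have at least one strictly positive outgoing edge, and the total number of positive entries equals the sum over $i\geq 1$ of the number of positive outgoing edges from $i$. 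Hence only flows with exactly $n$ positive entries contribute to $\Ehr_{q,1}(\F_G)$, each with weight $\prod_{a_{ij}>0}[a_{ij}]_q$.

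The heart of the argument is then a bijection between these minimum-support flows and increasing spanning trees of $G$ rooted at $0$. If $A$ has exactly $n$ positive entries, each vertex $i\geq 1$ has precisely one positive outgoing edge, with some target $p(i)<i$. Iterating $p$ from any $i$ produces a strictly decreasing sequence terminating at $0$, so the $n$ edges $\{(i,p(i))\}_{i=1}^n$ connect every vertex to $0$ and therefore form a spanning tree $T\subseteq G$ rooted at $0$ in which each non-root has a smaller parent, i.e., an increasing spanning tree. Conversely, given an increasing spanning tree $T\subseteq G$ with parent function $p$, the conservation laws at the non-root vertices uniquely determine an integer flow supported on $T$, and induction upward from the leaves yields $a_{i,p(i)} = \delta_T(i)$. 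Summing the weights over the bijection gives
\[\Ehr_{q,1}(\F_G) = \sum_{T \text{ increasing}} \prod_{i=1}^n [\delta_T(i)]_q,\]
as required.

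The only step that requires care is verifying the bijection and the induction computing the flow values; both are elementary once the minimum-support specialization is in place. No symmetric-function machinery or sign-reversing involution is needed for the $t=1$ case, in contrast with the $t=0$ treatments of Levande and Wilson.
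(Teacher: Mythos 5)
Your proposal is correct and follows essentially the same route as the paper: specialize the weight at $t=1$ so that only flows with exactly $n$ positive entries survive, biject these minimum-support flows with increasing spanning trees carrying flow $\delta_T(i)$ on the edge from $i$ to its parent, and then invoke Proposition~\ref{prop:threshinv} together with Gessel's and Merino's theorems for the remaining equalities. The only cosmetic difference is that you derive increasingness directly from the parent function satisfying $p(i)<i$, whereas the paper argues that a non-increasing tree cannot support a nonnegative flow with the given netflow; these are equivalent observations.
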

	\begin{proof}
		Note that for any $A \in \F_G \cap \ZZ^E$, $wt_{q,1}(A) = 0$ unless $A$ has exactly $n$ nonzero entries. Hence to compute $\operatorname {Ehr}_{q,1}(\F_G)$, we need only sum $wt_{q,1}(A)$ over such $A$.
		
		For any $A = (a_{ij}) \in \F_G \cap \ZZ^E$, the set of edges $(i,j)$ for which $a_{ij} \neq 0$ forms a connected subgraph of $G$. Hence if $A$ has exactly $n$ nonzero entries, then these edges must form a spanning tree $T$ of $G$. We claim that such $A$ are in bijection with increasing spanning trees $T$ of $G$. Indeed, if $T$ were not increasing, then there is some vertex $i>0$ that is smaller than its parent but larger than all of its descendants. But then $i$ has no outgoing edges in $T$, so there cannot be a nonnegative flow supported on $T$ with net flow 1 at $i$.
		
		Given an increasing spanning tree $T$ of $G$, there is a unique flow $A \in \F_G$ supported on the edges of $T$: we must have that the flow on the edge connecting $i$ to its parent is $\delta_T(i)$. Hence
		\[\operatorname {Ehr}_{q, 1}(\F_G) = \sum_{T \text{ increasing}} \prod_{i=1}^n wt_{q,1}(\delta_T(i)) = \sum_{T \text{ increasing}} \prod_{i=1}^n [\delta_T(i)]_q = I_G(q)\]
		by Proposition~\ref{prop:threshinv}.
	\end{proof}
	
	As a corollary, we can specialize to the case when $G$ is the complete graph $K_{n+1}$. This gives the $t=1$ case of the Haglund-Loehr conjecture (Theorem~\ref{HLhilb}) via Theorem~\ref{thm:1}.

	\begin{corollary}
		We have
		\[\operatorname {Ehr}_{q,1}(\F_{K_{n+1}}) = t_{K_{n+1}}(1,q) = I_{K_{n+1}}(q) = \sum_{T} q^{\inv(T)} = \sum_P q^{\area(P)},\]
		where $T$ ranges over all spanning trees of $K_{n+1}$, and $P$ ranges over all parking functions of length $n$.
	\end{corollary}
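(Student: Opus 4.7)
The plan is to deduce this as a direct specialization of Theorem~\ref{thm:t=1} to $G = K_{n+1}$. The first thing I would observe is that $K_{n+1}$ is indeed a threshold graph, built by starting from a single vertex and successively adding $n$ dominating vertices; moreover every labeling of $K_{n+1}$ is automatically a reverse degree sequence labeling, because all $n+1$ vertices have the same degree $n$. With that in place, Theorem~\ref{thm:t=1} applies verbatim and gives
\[\Ehr_{q,1}(\F_{K_{n+1}}) = t_{K_{n+1}}(1,q) = I_{K_{n+1}}(q) = \sum_T q^{\inv(T)} = \sum_P q^{\codeg(P)},\]
where $T$ ranges over spanning trees and the last sum is over $K_{n+1}$-parking functions.

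The remaining step is to reconcile that last sum with the classical $\sum_P q^{\area(P)}$. First I would verify that a function $P \colon [n] \to \Z_{\geq 0}$ is a $K_{n+1}$-parking function exactly when it is a classical parking function of length $n$. Since every vertex outside a nonempty $S \subseteq [n]$ is adjacent in $K_{n+1}$ to every $i \in S$, the $G$-parking function condition reduces to: there exists $i \in S$ with $P(i) < n+1 - |S|$. Applying this to the sets $S_k = \{i \in [n] : P(i) \geq k\}$ (and noting these are the only obstructing sets) recovers the classical condition $|\{i : P(i) < k\}| \geq k$ for each $k = 1,\dots,n$, and the reverse implication is immediate. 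Next, with $g = |E| - |V| + 1 = \binom{n+1}{2} - (n+1) + 1 = \binom{n}{2}$ for $K_{n+1}$, the codegree becomes $\codeg(P) = \binom{n}{2} - \sum_i P(i)$, which is precisely the classical $\area$ statistic on parking functions of length $n$.

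Because this corollary is a genuine specialization of Theorem~\ref{thm:t=1}, the main content is already packaged in that theorem, and I do not foresee any real obstacle beyond the two routine identifications above. The only point that deserves a careful (rather than incantatory) argument is the equivalence of the $G$-parking function inequality with the classical parking condition, where one must check both directions by choosing the right test sets $S_k$; everything else is bookkeeping.
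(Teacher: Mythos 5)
Your proof is correct and follows the same route as the paper: the corollary is stated there as an immediate specialization of Theorem~\ref{thm:t=1} to $G=K_{n+1}$, with the identification of $K_{n+1}$-parking functions with classical parking functions and of $\codeg$ with $\area$ (via $g=\binom{n}{2}$) already recorded in Section~3.2. Your explicit verification of the parking-function equivalence using the test sets $S_k$ is a correct filling-in of details the paper leaves implicit.
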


	\subsection{General flows}

	We now give a combinatorial formula for $\Ehr_{q,1}(\F_G(\b a))$ for arbitrary $\b a \in \ZZ_{>0}^n$ as a weighted sum over spanning trees over $G$. This formula is analogous to a result by Armstrong et al.\ \cite[Theorem 7.1]{AGHRS} in the case of the complete graph.

	Note that it is straightforward to give a combinatorial formula for $\Ehr_{q,1}(\F_G(\b a))$ as a weighted sum over increasing spanning trees. For a similar result for the complete graph, see Wilson \cite[\S 6]{AW}.
	\begin{proposition} \label{prop:q1}
		Let $G$ be a threshold graph. For any vertex $i>0$, let $\delta_T^{\b a}(i) = \sum_j a_j$, where $j$ ranges over descendants of $i$ (including $i$ itself). Then
		\[\Ehr_{q,1}(\F_G(\b a)) = \sum_{T \text{ increasing}} \prod_{i=1}^n [\delta_T^{\b a}(i)]_q.\]
	\end{proposition}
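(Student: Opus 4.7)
The plan is to mimic the proof of Theorem~\ref{thm:t=1} essentially verbatim, substituting the arbitrary netflow vector $\b a$ for $(1,\ldots,1)$. The first move is to observe that the weight $wt_{q,t}(A)$ contains a factor of $(1-t)^{\#\{a_{ij}>0\}-n}$, so specializing $t=1$ kills every contribution except those where $A$ has exactly $n$ nonzero entries. Since $\b a \in \ZZ^n_{>0}$ already forces at least $n$ nonzero entries, only the minimally supported $A$ survive, and it suffices to sum $\prod_{(i,j)} wt_{q,1}(a_{ij})$ over integer flows $A$ whose support has size exactly $n$.

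The next step is to show that such a support must be a spanning tree of $G$. Two standard conservation-of-flow facts do this: every vertex is incident to some edge of the support (otherwise its netflow would be $0$, impossible since $a_v > 0$ for $v \neq 0$), and no connected component of the support can avoid vertex $0$ (such a component would have strictly positive total netflow, which is impossible). Hence the support is connected, spans all $n+1$ vertices, and has exactly $n$ edges, i.e., a spanning tree $T$.

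The third step is the increasing-tree dichotomy already used in Theorem~\ref{thm:t=1}. Because edges of $G$ are oriented from larger to smaller, a nonnegative integer flow supported on $T$ with the prescribed positive netflows exists if and only if, for every $v \neq 0$, the unique $T$-path from $v$ to $0$ is strictly decreasing in vertex labels, which is precisely the condition that $T$ be increasing. Moreover, when $T$ is increasing, flow conservation forces the flow on the edge from $i$ to its parent to equal the total netflow of the subtree rooted at $i$, namely $\delta_T^{\b a}(i)$. These values are strictly positive, so the support automatically has size exactly $n$, confirming consistency with the first step.

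Combining the three observations,
\[
\Ehr_{q,1}(\F_G(\b a)) \;=\; \sum_{T \text{ increasing}} \prod_{i=1}^n wt_{q,1}(\delta_T^{\b a}(i)) \;=\; \sum_{T \text{ increasing}} \prod_{i=1}^n [\delta_T^{\b a}(i)]_q,
\]
which is the claim. There is no substantial obstacle: the only non-routine point is the equivalence between ``admits a nonnegative flow with the prescribed netflow'' and ``is increasing'', which is handled by a short path-decomposition argument relying on the edge-orientation convention, and which is essentially already present in the proof of Theorem~\ref{thm:t=1}.
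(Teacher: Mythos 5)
Your proof is correct and follows essentially the same route as the paper: the paper's own proof of this proposition simply refers back to the argument of Theorem~\ref{thm:t=1} (the factor $(1-t)^{\#\{a_{ij}>0\}-n}$ kills all but the minimally supported flows, the support must be an increasing spanning tree, and the flow on the edge from $i$ to its parent is forced to be $\delta_T^{\b a}(i)$). You have merely written out those steps in full, including the spanning-tree and increasing-tree verifications that the paper treats as already established.
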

	\begin{proof}
		As in the proof of Theorem~\ref{thm:t=1}, the only nonzero terms in the sum for $\Ehr_{q,1}(\F_G)$ come from flows supported on increasing spanning trees of $G$. For any such tree, there is a unique flow in $\F_G(\b a)$ supported on it: the flow on the edge connecting $i$ to its parent is $\delta_T^{\b a}(i)$. The result follows easily.
	\end{proof}

	The following theorem converts this formula from a sum over increasing spanning trees of $G$ to a sum over all spanning trees of $G$. For any spanning tree $T$, let $E(T)$ denote the edge set of $T$, $p_T(i)$ denote the parent of vertex $i$, and $\Inv(T)$ denote the set of inversions of $T$.

	\begin{theorem} \label{thm:q1a}
		Let $G$ be a threshold graph and $\b a \in \ZZ_{>0}^n$. 
		\begin{enumerate}[(a)]
			\item Let $\widetilde G$ be the multigraph obtained from $G$ by replacing each edge $(i,j)$ with $a_{\max\{i,j\}}$ parallel edges. If $G$ is connected, then $\Ehr_{q,1}(\F_G(\b a)) = t_{\widetilde G}(1, q)$.
			\item For any spanning tree $T$ of $G$, let
			\[w(T) = \prod_{(i,j) \in E(T)} [a_{\max\{i,j\}}]_q \cdot \prod_{(i,j) \in \Inv(T)} q^{a_{\max\{p_T(i),j\}}}.\]
			Then $\Ehr_{q,1}(\F_G(\b a)) = \sum_T w(T)$, where $T$ ranges over all spanning trees of $G$.
		\end{enumerate}
	\end{theorem}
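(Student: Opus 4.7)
The plan is to prove part (b) first and then derive part (a) using a Tutte polynomial expansion for the multigraph $\widetilde G$.

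For part (b), I would start from Proposition~\ref{prop:q1}, which expresses $\Ehr_{q,1}(\F_G(\b a))$ as a sum over increasing spanning trees of $G$. The goal is then to establish the combinatorial identity
\[
\sum_{T \text{ increasing}} \prod_{i=1}^n [\delta_T^{\b a}(i)]_q \;=\; \sum_T w(T),
\]
where the right hand side sums over all spanning trees of $G$. I would prove this by induction on $|V(G)|$. If $G$ is connected threshold on $\{0,\dots,n\}$ (labeled by reverse degree sequence), then vertex $n$ is dominating, so it is adjacent to every other vertex. On the left, any increasing tree forces $n$ to be a leaf with some parent $p < n$, making $\delta_T^{\b a}(n) = a_n$; after factoring out $[a_n]_q$ one recognizes an Ehrhart sum on $G' = G \setminus \{n\}$ with netflow vector $\b a'$ obtained from $\b a$ by increasing the $p$-th coordinate by $a_n$ (so that $\delta_{T'}^{\b a'}(i) = \delta_T^{\b a}(i)$ for $i<n$). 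Summing over the choice of $p$ and applying the inductive hypothesis to $G'$ gives $[a_n]_q \sum_{p}\sum_{T'} w_{\b a'}(T')$. On the right, one decomposes each spanning tree $T$ of $G$ by the parent and children of $n$ in $T$, collects the factors $[a_n]_q$ coming from the tree edges at $n$ together with the inversion factors $q^{a_{\max\{p_T(n),c\}}}$ from each child $c$ of $n$, and matches the resulting expression with the one obtained from the left hand side.

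For part (a), I would expand the Tutte polynomial of $\widetilde G$ by grouping edge subsets according to their underlying $G$-edges. Starting from the subgraph expansion $t_{\widetilde G}(1,q) = \sum_{A_{\widetilde G}\text{ conn.\ spanning}}(q-1)^{|A_{\widetilde G}|-n}$ and using the identity $\sum_{k=1}^{m}\binom{m}{k}(q-1)^{k-1} = [m]_q$ for the number of nonempty multi-subsets of $k$ parallels, one obtains
\[
t_{\widetilde G}(1, q) \;=\; \sum_A \prod_{e \in A} [a_{\max\{e\}}]_q \cdot (q-1)^{|A|-n},
\]
where $A$ ranges over connected spanning subgraphs of $G$. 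By part (b), it then suffices to prove $\sum_T w(T) = \sum_A \prod_e [a_{\max\{e\}}]_q (q-1)^{|A|-n}$. This can be established by decomposing each connected spanning subgraph $A$ into a spanning tree $T\subseteq A$ plus extra edges $A\setminus T$ and matching, via an external-activity/sign-reversing-involution argument, the cycle-rank weight $(q-1)^{|A|-n}$ with the inversion exponents of $T$.

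The main obstacle is the inductive step in part (b). One must simultaneously keep track of the factor $[a_n]_q$ contributed by each tree edge at $n$, the inversion exponents introduced by the children of $n$ in a general spanning tree, and the descendant-sum factors $[\delta_T^{\b a}(i)]_q$ for ancestors of $n$ (which change when $n$ is removed); reconciling these via the inductive hypothesis applied to $G'$ with the modified netflow $\b a'$ is where the combinatorial bookkeeping becomes delicate. Similarly, matching inversion exponents with cycle-rank contributions in the reduction of part (a) to part (b) requires a careful activity argument or an alternative inclusion--exclusion.
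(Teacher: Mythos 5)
Your high-level plan (prove (b) from Proposition~\ref{prop:q1}, then get (a) by expanding $t_{\widetilde G}(1,q)$ over connected spanning subgraphs grouped into parallel classes via $\sum_{k\ge 1}\binom{m}{k}(q-1)^{k-1}=[m]_q$) is sound, and that expansion is essentially the generating function $c_{\widetilde G}$ that the paper also uses. But there are two genuine problems. First, a factual error in your setup: under the reverse-degree-sequence labeling, vertex $n$ has the \emph{smallest} degree, not the largest; it is adjacent only to $0,1,\dots,\od_n-1$, so the parent $p$ of the leaf $n$ ranges over those vertices only. (Vertex $0$ is the dominating one.) This is fixable, but it signals that the induction is being organized around the wrong vertex.

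Second, and more seriously, the inductive step for (b) is exactly the content of the theorem and is not carried out. After peeling the leaf $n$ from the increasing-tree side you get $[a_n]_q\sum_{p}\Ehr_{q,1}(\F_{G'}(\b a'_p))$ with $a'_p=a_p+a_n$, and the inductive hypothesis turns each summand into $\sum_{T'}w_{\b a'_p}(T')$. But on the other side, in a general spanning tree $T$ of $G$ the vertex $n$ can have children, so there is no bijection between spanning trees of $G$ and pairs $(p,T')$; the correspondence is many-to-many (e.g.\ via contracting the edge $(n,p)$), and the weights do not match term by term: a tree edge of $T'$ whose larger endpoint is $p$ carries $[a_p+a_n]_q$ while the corresponding edge of $T$ carries $[a_p]_q$, and the inversion exponents involving $n$ and its descendants shift as well. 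Reconciling these requires a telescoping identity of the type $\sum_{v}[a_{\max\{r,v\}}]_q\prod_{i<v}q^{a_{\max\{r,i\}}}=[\sum_v a_{\max\{r,v\}}]_q$, which is what the paper's first proof uses --- but the paper organizes the induction around the \emph{root} $r$, decomposing $T$ into the subtrees on the components of $T-r$, precisely so that this identity applies cleanly and no netflow modification is needed. Your organization around vertex $n$ does not obviously admit such a clean identity, and you explicitly defer this ("delicate bookkeeping") rather than resolve it. The same criticism applies to your part (a): matching $(q-1)^{|A|-n}$ over connected spanning subgraphs with the inversion exponents of $w(T)$ via external activity is the crux (the paper does it in its second proof with a specific edge order adapted to the root), and asserting that a "careful activity argument" exists is not a proof. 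As written, the proposal sets up the right objects but leaves both key identities unproved.
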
 

	Note that if we set $a_1=a_2=\cdots = a_n=1$, then $w(T) = q^{\inv(T)}$, so we recover Theorem~\ref{thm:t=1}. We will give two proofs of this result. The first is an adaptation of the proof of Theorem~\ref{thm:gessel} above by Gessel in \cite{gessel}. The second uses known properties of the Tutte polynomial.

	\begin{proof}[Proof 1 of Theorem~\ref{thm:q1a}]
		For part (a), let $c_{\widetilde G}(q) = \sum_{H} q^{|E(H)|}$, where $H$ ranges over connected sub-multigraphs of $\widetilde G$. For any such $H$ and any fixed vertex $r$, $H \backslash \{r\}$ decomposes into connected components, yielding an unordered set partition $V_1, \dots, V_k$ of $V \backslash \{r\}$. Let $a(r, V_j)$ denote the total number of edges in $\widetilde G$ from $r$ to a vertex in $V_j$.	Since $H$ must have at least one edge from $r$ to a vertex in $V_j$ for each $j$, and the induced subgraphs $H[V_j]$ are all connected, we have
		\[c_{\widetilde G}(q) = \sum_{V_1, \dots, V_k} \prod_{j=1}^k ((1+q)^{a(r,V_j)}-1) \cdot c_{\widetilde G[V_j]}(q), \tag{$*$}\]
		where the sum ranges over set partitions $V_1, \dots, V_k$ of $V \backslash \{r\}$.

		To prove that $\Ehr_{q,1}(\F_G(\b a)) = (q-1)^{-n}c_{\widetilde G}(q-1) = t_{\widetilde G}(1,q)$, it suffices to show that, for $r=0$, the weighted Ehrhart sum satisfies the appropriate recursion derived from $(*)$, namely
		\[\Ehr_{q,1}(\F_G(\b a)) =  \sum_{V_1, \dots, V_k} \prod_{j=1}^k [a(r,V_j)]_q \cdot \Ehr_{q,1}(\F_{G[V_j]}(\b a[V_j])), \tag{$**$}\]
		where if $V_j = \{i_0, i_1, \dots, i_s\}$ in order, then $\b a[V_j] = (a_{i_1}, \dots, a_{i_s})$. (Note that $G[V_j]$ is still a threshold graph for all $V_j$). If $r = 0$, then $a(0, V_j) = a_{i_0} + \dots + a_{i_s} = \delta_T^{\b a}(i_0)$, where $T$ is any increasing spanning tree of $G$ with a subtree supported on $V_j$. Part (a) now follows easily from Proposition~\ref{prop:q1}. In particular, since $(*)$ is satisfied for all $r$, so must $(**)$ also be satisfied for all $r$.

		We now show part (b) by induction on $n$---in fact, we will show that it holds for any choice of root $r$, not just $r=0$. (Changing the root will usually change the second factor in $w(T)$.)  To see this, let $T$ be a spanning tree of $G$ with subtrees $T_1, \dots, T_k$ on vertex sets $V_1, \dots, V_k$. If $v_j \in V_j$ is a child of the root $r$, then
		\[w(T) = \prod_{j=1}^k w(T_j) [a_{\max\{r,v_j\}}]_q \prod_{\substack{i \in V_j\\i<v_j}} q^{a_{\max\{r, i\}}}.\]
		By induction, for a fixed $v_j$, $\sum_{T_j} w(T_j) = \Ehr_{q,1}(\F_{G[V_j]}(\b a[V_j]))$, which does not depend on $v_j$. Moreover, as $v_j$ ranges over vertices in $V_j$ adjacent to $r$ (noting that any $i<v_j$ is also adjacent to $r$ since $G$ is a threshold graph),
		\[\sum_{v_j} [a_{\max \{r, v_j\}}]_q \prod_{\substack{i \in V_j\\i < v_j}} q^{a_{\max\{r,i\}}} = [\textstyle\sum_{v_j} a_{\max \{r, v_j\}}]_q = [a(r, V_j)]_q.\]
		Hence summing over all spanning trees $T$,
		\begin{align*}
			\sum_T w(T) &= \sum_{V_1, \dots, V_k} \prod_{j=1}^k \sum_{v_j} \sum_{T_j} w(T_j)  [a_{\max\{r,v_j\}}]_q \prod_{\substack{i \in V_j\\i<v_j}} q^{a_{\max\{r, i\}}}\\
			&=\sum_{V_1, \dots, V_k} \prod_{j=1}^k  [a(r, V_j)]_q \Ehr_{q,1}(\F_{G[V_j]}(\b a[V_j]))\\
			&= \Ehr_{q,1}(\F_G(\b a)). \qedhere
		\end{align*}
	\end{proof}

	For the second proof, we recall the following properties of the Tutte polynomial (see \cite[Ch. X]{bollobas}). The first is that the Tutte polynomial satisfies the following \emph{deletion-contraction} recurrence.
	\begin{proposition}
		Let $G$ be a multigraph and $e$ an edge of $G$.
		\begin{enumerate}[(a)]
			\item If $e$ is not a bridge or loop of $G$, then $t_G(x,y) = t_{G-e}(x,y) + t_{G/e}(x,y)$, where $G-e$ and $G/e$ are obtained from $G$ by removing edge $e$ and contracting edge $e$, respectively.
			\item If $e$ is a bridge of $G$, then $t_G(x,y) = x t_{G/e}(x,y)$.
			\item If $e$ is a loop of $G$, then $t_G(x,y) = y t_{G-e}(x,y)$.
		\end{enumerate}
	\end{proposition}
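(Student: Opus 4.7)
The plan is to prove all three parts uniformly from the subgraph expansion defining $t_G$. Split the sum into the two pieces $\sum_{A \not\ni e}$ and $\sum_{A \ni e}$ and match each to terms of $t_{G-e}$ or $t_{G/e}$, tracking how $k(A)$, $k(E)$, $|A|$, and $|V|$ transform under deletion and contraction of $e$.

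For part (a), with $e$ neither a bridge nor a loop: the sum over $A \not\ni e$ is indexed by subsets of $E(G-e)$. Here $k_G(A) = k_{G-e}(A)$ (both count components of $(V,A)$), and since $e$ is not a bridge, $k_G(E) = k_{G-e}(E-e)$; everything else matches verbatim, so this piece equals $t_{G-e}(x,y)$. For the sum over $A \ni e$, set $A' = A \setminus \{e\}$, a subset of $E(G/e) = E - e$. Since $e$ is not a loop, $|V_{G/e}| = |V| - 1$ and $|A'| = |A| - 1$; and since $e \in A$, contracting $e$ changes neither the component count of the subgraph nor of the whole graph, so $k_{G/e}(A') = k_G(A)$ and $k_{G/e}(E-e) = k_G(E)$. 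Substituting gives $k_{G/e}(A') - k_{G/e}(E-e) = k_G(A) - k_G(E)$ and $k_{G/e}(A') + |A'| - |V_{G/e}| = k_G(A) + |A| - |V|$, so this piece equals $t_{G/e}(x,y)$, which proves $t_G = t_{G-e} + t_{G/e}$.

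For part (b), with $e$ a bridge: the same bijections apply, but now $k_{G-e}(E-e) = k_G(E) + 1$, producing an extra factor of $(x-1)$ in the deletion piece, so $t_G = (x-1) t_{G-e} + t_{G/e}$. To collapse this to $t_G = x \cdot t_{G/e}$, I would use that for a bridge, $G - e$ is a disjoint union $H_1 \sqcup H_2$ while $G/e$ is the one-vertex join $H_1 \vee H_2$; multiplicativity of the Tutte polynomial over disjoint unions and over one-vertex joins (both immediate from the corank-nullity expansion, since neither operation changes the corank or nullity of any subset of edges) gives $t_{G-e} = t_{H_1} t_{H_2} = t_{G/e}$, completing the case. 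Part (c) is dual: for a loop $e$, the term $A \ni e$ picks up an extra factor of $(y-1)$ from the nullity exponent (since $|A|$ increases by one while $k(A)$ and $|V|$ are unchanged), yielding $t_G = t_{G-e} + (y-1) t_{G/e}$; because contracting a loop equals deleting it, $G/e = G - e$, and the expression collapses to $t_G = y \cdot t_{G-e}$.

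The main obstacle is bookkeeping: verifying the identities relating $k_G, k_{G-e}, k_{G/e}$ on both $A$ and $E$ (or $E-e$) in each of the three cases, together with the multiplicativity lemmas over disjoint unions and one-vertex joins needed to finish the bridge case. These are all routine once the setup is in place, but they must be checked carefully to ensure the exponents of $(x-1)$ and $(y-1)$ in the two expansions agree.
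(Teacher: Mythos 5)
Your proof is correct. The paper does not prove this proposition at all --- it is recalled as a standard property of the Tutte polynomial with a citation to Bollob\'as --- so there is no argument of the paper's to compare against; your derivation from the subgraph (corank--nullity) expansion is the standard one and is complete. The bookkeeping is right in all three cases: for $A \not\ni e$ the only quantity that can shift is $k(E)$ versus $k(E-e)$ (it does exactly when $e$ is a bridge, producing the extra $(x-1)$), for $A \ni e$ the simultaneous drop of $|A|$ and $|V|$ by one under contraction keeps both exponents fixed, and for a loop the nullity exponent rises by one. You also correctly identified that part (b) genuinely needs the extra input $t_{G-e}=t_{G/e}$, which your multiplicativity argument over disjoint unions and one-vertex joins supplies.
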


	The second is that the Tutte polynomial can be described in terms of internal and external activity as follows. Fix a total order $\prec$ on the edges of $G$. Given a spanning tree $T$, we call an edge $e \in T$ \emph{internally active} if $e$ is the smallest edge of $G$ joining the two connected components of $T - e$. We call an edge $e \notin T$ \emph{externally active} if $e$ is the smallest edge in the unique cycle of $T \cup \{e\}$. Then the \emph{internal} and \emph{external} activities $ia(T)$ and $ea(T)$ are the total number of internally and externally active edges of $T$, respectively.

	\begin{proposition}
		Let $G$ be a multigraph. Then $t_G(x,y) = \sum_T x^{ia(T)}y^{ea(T)}$, where $T$ ranges over spanning trees of $G$. (This does not depend on the choice of total order $\prec$.)
	\end{proposition}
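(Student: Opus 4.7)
My approach would be induction on the number of edges of $G$ (which I may assume is connected, by multiplicativity of both sides over connected components). The base case of a single vertex with no edges is immediate, with both sides equal to $1$. For the inductive step I fix the ordering $\prec$ on $E(G)$ and let $e$ be the $\prec$-largest edge of $G$; the argument then mirrors the deletion--contraction recurrence, splitting into three cases.

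\textbf{Loop and bridge cases.} If $e$ is a loop, then $e$ belongs to no spanning tree and $\{e\}$ is a cycle in which $e$ is (vacuously) minimum, so $e$ is externally active for every spanning tree $T$. Because $e$ is $\prec$-largest, the internal/external activities of all other edges agree with their activities in $G-e$, so $\sum_{T} x^{ia(T)}y^{ea(T)} = y \cdot t_{G-e}(x,y) = t_{G}(x,y)$ by the loop rule. The bridge case is dual: $e$ lies in every spanning tree and is internally active as the unique edge joining the two components of $T-e$, yielding $x \cdot t_{G/e}(x,y) = t_{G}(x,y)$.

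\textbf{Main case.} When $e$ is neither a loop nor a bridge, I would split spanning trees of $G$ by whether they contain $e$. Trees $T \not\ni e$ are exactly the spanning trees of $G-e$, while trees $T \ni e$ are in bijection with spanning trees $T/e$ of $G/e$. Because $e$ is $\prec$-largest, it can never minimize a cycle or cocycle relevant to activity: for $T \ni e$, the cocycle of $e$ in $T$ contains another (necessarily smaller) edge, so $e$ is not internally active; for $T \not\ni e$, the fundamental cycle of $e$ in $T \cup \{e\}$ contains a smaller tree edge, so $e$ is not externally active. Moreover, for any $f \neq e$, the activity status of $f$ in $(G,T)$ matches that in $(G-e,T)$ or $(G/e,T/e)$ as appropriate. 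Summing the two subtotals yields $t_{G-e}(x,y) + t_{G/e}(x,y) = t_G(x,y)$ by deletion--contraction, closing the induction.

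\textbf{Main obstacle.} The most delicate bookkeeping is in the main case: showing that the activity status of each $f \neq e$ survives deletion or contraction of the $\prec$-largest edge $e$. For internal activity, this reduces to verifying that the bipartition of $V(G)$ determined by $T-f$ coincides with the corresponding bipartition in $G/e$ (using that both endpoints of $e$ lie in the same part, since $e \in T-f$ whenever $e \in T$), and that the $\prec$-minimum edge crossing this bipartition is unchanged by removing the single edge $e$; the external-activity check is dual, carried out on the fundamental cycle of $f$. Once this is established, independence from the choice of $\prec$ is automatic, since the left-hand side $t_G(x,y)$ is defined without reference to any edge ordering.
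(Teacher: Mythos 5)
The paper does not prove this proposition at all: it is recalled as a standard property of the Tutte polynomial with a citation to Bollob\'as, so there is no in-paper argument to compare against. Your proof is the classical deletion--contraction induction (essentially Tutte's original argument) and it is correct. The two key observations are exactly the right ones: (i) choosing $e$ to be the $\prec$-largest edge forces $e$ to be inactive whenever it is neither a loop nor a bridge (its fundamental cycle, resp.\ cut, always contains a smaller edge), and active in the loop and bridge cases; (ii) the activity status of every other edge $f$ is unchanged when passing to $G-e$ or $G/e$, because the fundamental cycle of $f\notin T$ lies in $T\cup\{f\}$ and the cut of $T-f$ can lose at most the $\prec$-largest element $e$, neither of which can affect the minimum. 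One small point worth making explicit in the contraction case: an edge $g$ parallel to $e$ becomes a loop in $G/e$, but your bipartition argument handles this correctly, since both endpoints of $g$ lie on the same side of every cut determined by $T-f$ with $e\in T$, and $g$'s fundamental cycle $\{g,e\}$ in $G$ makes $g$ externally active exactly as the loop $g$ is in $G/e$. Also note that for the reduction to connected $G$ one should read ``spanning tree'' as ``maximal spanning forest'' in the disconnected case (otherwise the right-hand side vanishes while $t_G$ does not); this is immaterial for the paper, which only uses the statement for connected graphs. Finally, your closing remark is the right way to get independence of $\prec$: once the sum is shown to equal $t_G(x,y)$ for an arbitrary fixed order, independence is automatic.
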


	We are now ready to give a second proof of Theorem~\ref{thm:q1a}. Although one can use the method of part (b) below to prove part (a) as well via Proposition~\ref{prop:q1}, we present a proof using the deletion-contraction recurrence since a similar recurrence will appear in the proof of Theorem~\ref{thm:q-1}.

	\begin{proof}[Proof 2 of Theorem~\ref{thm:q1a}]
		For (a), let $m$ be the largest neighbor of vertex $n$ in $G$. We will use the deletion-contraction recurrence on each of the $a_n$ edges of $\widetilde G$ from $n$ to $m$. At most one of these edges can be contracted, and all subsequent edges become loops. Let $\widetilde G'$ be the graph obtained by contracting any one of these edges and then removing all loops, and let $\widetilde G''$ be the graph obtained by deleting all of these edges. Then we get
		\[t_{\widetilde G}(1,q) = \begin{cases}
			[a_n]_q\cdot t_{\widetilde G'}(1,q) + t_{\widetilde G''}(1,q) & \text{ if $m>0$,}\\
			[a_n]_q\cdot t_{\widetilde G'}(1,q) & \text{ if $m=0$.}
		\end{cases}\]
		(When $m=0$, the last edge from $n$ to $m$ is a bridge so it cannot be deleted.) Note that for $j<m$, the number of edges in $\widetilde G'$ from $m$ to $j$ is $a_m+a_n$; hence $\widetilde G'$ comes from the threshold graph $G'$ ($G$ with vertex $n$ removed) by multiplying edges according to the flow vector $\b a' = (a_1, \dots, a_{m-1}, a_m+a_n, a_{m+1}, \dots, a_{n-1})$ if $m>0$, and $\b a' = (a_1, \dots, a_{n-1})$ if $m=0$. Likewise $\widetilde G''$ comes from the threshold graph $G''$ ($G$ with edge $(n, m)$ removed) with flow vector $\b a$.

		In fact, $\Ehr_{q,1}(\F_G(\b a))$ satisfies the same recurrence, that is,
		\[\Ehr_{q,1}(\F_G(\b a)) = \begin{cases}
			[a_n]_q \cdot \Ehr_{q,1}(\F_{G'}(\b a')) + \Ehr_{q,1}(\F_{G''}(\b a)) & \text{ if $m>0$,}\\
			[a_n]_q \cdot \Ehr_{q,1}(\F_{G'}(\b a')) & \text{ if $m=0$.}
		\end{cases}\]
		Indeed, as in Proposition~\ref{prop:q1}, we need only consider flows supported on spanning trees $T$ of $G$. If $(n,m) \in T$, then it must support a flow of size $a_n$, which changes the net flow at $m$ on the rest of $T$ from $a_m$ to $a_m+a_n$---this gives the first term in the sum. If $m>0$ and $(n,m) \notin T$, then we get the second term in the sum. It follows that $\Ehr_{q,1}(\F_G(\b a)) = t_{\widetilde G}(1,q)$ by induction on the number of edges of $G$ (the base case with one edge is trivial).

		For (b), we again prove the claim for any root $r$ by induction on $n$. We need to show that
		\[\sum_T wt(T) = \sum_{\widetilde{T}} q^{ea(\widetilde T)},\tag{$\dagger$}\]
		where $T$ and $\widetilde T$ range over spanning trees of $G$ and $\widetilde G$, respectively. (Recall that the right side does not depend on the choice of total order.) Fix a set partition $V_1, \dots, V_k$ of $V \backslash \{r\}$ and vertices $v_j \in V_j$ adjacent to $r$. Then restrict both sides of $(\dagger)$ to trees $T$ and $\widetilde T$ such that the $v_j$ are the children of $r$, and the $V_j$ are the vertex sets supporting the corresponding subtrees $T_j$ and $\widetilde T_j$. The left hand side then becomes, by induction,
		\[\prod_{j=1}^k \sum_{T_j}wt(T_j) \cdot  [a_{\max\{r,v_j\}}]_q \prod_{\substack{i \in V_j\\i<v_j}}q^{a_{\max\{r,i\}}} = \prod_{j=1}^k \sum_{\widetilde T_j}q^{ea(\widetilde T_j)} \cdot  [a_{\max\{r,v_j\}}]_q \prod_{\substack{i \in V_j\\i<v_j}}q^{a_{\max\{r,i\}}}.\]
		We claim this is also what the right hand side of $(\dagger)$ becomes.

		Choose any total order on the edges of $\widetilde G$ that starts with all edges between $r$ and $0$, then all edges between $r$ and $1$, and so forth. (The edges not containing $r$ can be in any order after that.) No edges between distinct $V_i$ and $V_j$ are externally active, so the external activity of $\widetilde T$ is the sum of the external activities of its subtrees $\widetilde T_j$ plus the number of externally active edges containing $r$. Of the $a_{\max\{r,v_j\}}$ edges from $r$ to $v_j$, any number from $0$ to $a_{\max\{r, v_j\}}-1$ are externally active depending on which parallel edge lies in $\widetilde T$. For $i \in V_j \backslash \{v_j\}$, all edges from $r$ to $i$ are externally active if $i<v_j$, otherwise none are. The result follows.
	\end{proof}

	\begin{remark}
		One special case worth noting is when $q=t=1$. In this case, Theorem~\ref{thm:q1a} implies that $\Ehr_{1,1}(\F_G(\b a))$ is the total weight of all spanning trees $T$ of $G$, where the weight of any edge $(i,j)$ is $a_{\max\{i,j\}}$. Thus $\Ehr_{1,1}(\F_G(\b a))$ can be expressed as a determinant using the Matrix-Tree Theorem. In fact, one can show that this determinant factors into linear factors. For instance, when $G = K_{n+1}$, $\Ehr_{1,1}(\F_{K_{n+1}}(\b a)) = \det M$, where
		\[M = \begin{bmatrix}
			a_1+a_2+\cdots + a_n & -a_2 & -a_3 & \cdots & -a_n\\
			-a_2 & 2a_2 + a_3 + \cdots + a_n &-a_3 & \cdots & -a_n\\
			-a_3& -a_3 & 3a_3 + a_4 + \cdots + a_n& \cdots & -a_n\\
			\vdots & \vdots & \vdots & \ddots & \vdots\\
			-a_n & -a_n & -a_n & \cdots & na_n
		\end{bmatrix}.\]
		Multiplying the $i$th row by $i+1$ and adding all the lower rows to it for $i=1, \dots, n-1$ yields a lower triangular matrix, so one can easily recover the result of Armstrong et al.\ \cite{AGHRS} that
		\[\Ehr_{1,1}(\F_{K_{n+1}}(\b a)) = a_n \cdot \prod_{i=1}^{n-1} ((i+1)a_i + \textstyle\sum_{j=i+1}^n a_j).\]
		We will see a generalization of this product formula for general threshold graphs $G$ later in Section~\ref{sec:qq^{-1}} when we compute $\Ehr_{q,q^{-1}}(\F_G(\b a))$.
	\end{remark}

\section{Calculating the $(q,0)$-Ehrhart function}
\label{sec:q0}

	In this section, we give a product formula for the weighted Ehrhart function of the flow polytope $\F_G(\b a)$ when $G$ is a threshold graph and $t=0$. In particular, when $q=1$ and $t=0$, the weighted Ehrhart function evaluates to the number of increasing spanning trees of $G$, or equivalently the number of maximal $G$-parking functions.
	
	For a threshold graph $G$, let $\od_i = \min\{d_i, i\}$ be the outdegree of vertex $i$, that is, the number of vertices $j$ adjacent to $i$ with $j<i$. It should be noted that these outdegrees are closely related to the number of increasing spanning trees of $G$.
	
	\begin{proposition}
		Let $G$ be a threshold graph. The number of increasing spanning trees of $G$ is $\prod_{i=1}^n \od_i$.
	\end{proposition}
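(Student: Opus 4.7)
The plan is to count increasing spanning trees by recording, for each non-root vertex $i$, the choice of its parent. Recall that the root is $r=0$ and that a spanning tree $T$ is increasing precisely when $T$ has no inversions; equivalently, on the unique path from $0$ to any vertex, the labels strictly increase. This is equivalent to the condition that every non-root vertex $i$ has parent $p_T(i) < i$.

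Given this reformulation, I would set up the obvious map that sends an increasing spanning tree $T$ to the function $p_T\colon \{1,\dots,n\} \to \{0,1,\dots,n-1\}$ defined by the parent assignment, and verify that this is a bijection onto the set of functions $p$ satisfying $p(i)<i$ and $(p(i),i) \in E(G)$ for every $i \geq 1$. Injectivity is clear, since a rooted tree is determined by its parent function. For surjectivity, given any such $p$, build the graph with edge set $\{(i,p(i)) : 1 \le i \le n\}$; this graph has $n$ edges on $n+1$ vertices, and starting from any vertex $v$ and iterating $p$ produces a strictly decreasing sequence of labels, which must terminate at $0$. Hence every vertex is connected to $0$, so the graph is a spanning tree rooted at $0$, and by construction it is increasing and uses only edges of $G$.

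Once the bijection is in place, the count is immediate: the parent of $i$ can be any vertex $j<i$ adjacent to $i$ in $G$, and there are exactly $\bar d_i = \min\{d_i,i\}$ such choices (using that in a threshold graph labeled by reverse degree sequence, the neighbors of $i$ among $\{0,1,\dots,i-1\}$ number $\min\{d_i,i\}$, as noted in Section~\ref{subsec:tg}). Since the choices for different $i$ are independent, the total number of increasing spanning trees is $\prod_{i=1}^n \bar d_i$. Note that this formula correctly yields $0$ in the disconnected case, where some $\bar d_i = 0$ for $i\geq 1$. There is no real obstacle here; the only point requiring care is the surjectivity argument showing that any valid parent function actually yields a tree, and this follows at once from the strict decrease of labels along parent chains.
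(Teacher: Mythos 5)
Your proof is correct and is essentially the paper's argument: the paper's entire proof is the one-line observation that each vertex $i>0$ independently chooses one of its $\od_i$ smaller neighbors as its parent, and your write-up simply makes explicit the bijection with parent functions and the surjectivity check (parent chains strictly decrease, hence terminate at $0$) that the paper leaves implicit.
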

	\begin{proof}
		Each vertex $i>0$ has a choice of $\od_i$ vertices to be its parent.
	\end{proof}
	
	We now state the main result of this section. Observe that when $t=0$, the weights specialize to
	\[wt_{q,0}(A) = (q-1)^{\#\{a_{ij}>0\}-n}\cdot \prod_{n \geq i > j \geq 0} wt_{q,0}(a_{ij})\text{, \quad where}\quad
	wt_{q,0}(b) = \begin{cases}q^{b-1}&\text{if $b>0$,}\\1&\text{if $b=0$.}\end{cases}\]
	
	\begin{theorem}\label{thm:t=0}
		Let $G$ be a threshold graph and $\b a \in \ZZ_{>0}^n$. Then
		\[\operatorname {Ehr}_{q,0}(\F_G(\b a)) = \prod_{i=1}^n q^{\od_i(a_i-1)} [\od_i]_q.\]
	\end{theorem}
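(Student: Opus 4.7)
The plan is to induct on $n$ by peeling off vertex $n$, the vertex of smallest degree under the reverse-degree-sequence labeling. The base case $n = 0$ is trivial. For the inductive step, observe that in $G$, vertex $n$ is adjacent exactly to $\{0, 1, \ldots, d_n - 1\}$. Decompose any flow $A \in \F_G(\b a)$ as its restriction $A' \in \F_{G'}(\b a')$ to $G' = G \setminus \{n\}$, together with the flows $f_0, \ldots, f_{d_n-1}$ on the $d_n$ outgoing edges of $n$, where $\sum_j f_j = a_n$ and $a_j' = a_j + f_j$ for $1 \le j < d_n$ with $a_j' = a_j$ otherwise. Letting $k$ denote the number of nonzero $f_j$'s, direct bookkeeping with the $(q-1)^{N(A)-n}$ normalization gives
\[
wt_{q,0}(A) \;=\; (q-1)^{k-1}\, q^{a_n - k}\, wt_{q,0}(A').
\]

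The key structural claim is that for every $1 \le i \le n-1$, the outdegree $\od_i$ is unchanged upon removing vertex $n$. If $i \ge d_n$, then $n$ was never adjacent to $i$; if $i < d_n$, then the adjacency of $i$ to $n > i$ forces $d_i > i$ (otherwise all $d_i$ neighbors of $i$ would lie in $\{0, \ldots, d_i - 1\}$, all smaller than $i$), and hence $\od_i = i$ is insensitive to the removal of edge $(n,i)$. Applying the inductive hypothesis to $\F_{G'}(\b a')$, and using $\od_i = i$ for $i < d_n$ to convert $a_i' = a_i + f_i$ into an extra factor $q^{i f_i}$, we obtain
\[
\Ehr_{q,0}(\F_G(\b a)) \;=\; \left(\prod_{i=1}^{n-1} q^{\od_i(a_i-1)}\,[\od_i]_q\right) \cdot S, \qquad\text{where}\qquad S \;=\; \sum_{\substack{f_0, \ldots, f_{d_n - 1} \ge 0\\ \sum f_j = a_n}} (q-1)^{k-1}\, q^{a_n - k} \prod_{i=1}^{d_n - 1} q^{i f_i}.
\]

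It remains to verify $S = q^{d_n(a_n-1)}[d_n]_q$, which combined with $\od_n = d_n$ completes the induction. Rewriting $(q-1)^k q^{a_n - k} = \prod_{j: f_j > 0}(q-1)\,q^{f_j - 1}$, each summand of $(q-1)S$ factors over the indices $j$, so the ordinary generating series in $z$ decomposes and telescopes:
\[
\sum_{a_n \ge 0} z^{a_n}\,(q-1)\,S \;=\; \prod_{j=0}^{d_n - 1}\!\left(1 + \frac{(q-1)q^j z}{1 - q^{j+1}z}\right) \;=\; \prod_{j=0}^{d_n - 1} \frac{1 - q^j z}{1 - q^{j+1}z} \;=\; \frac{1 - z}{1 - q^{d_n} z}.
\]
Extracting the coefficient of $z^{a_n}$ for $a_n \ge 1$ gives $(q^{d_n} - 1)\,q^{d_n(a_n-1)}$, so $S = q^{d_n(a_n-1)}\,[d_n]_q$ as desired.

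I anticipate the main obstacle to be the preservation of $\od_i$ under removal of vertex $n$ and the careful bookkeeping of the weight decomposition; the telescoping generating-function identity at the end is short once $S$ has been isolated. A minor edge case is $d_n = 0$ (vertex $n$ isolated), where both sides vanish since $[d_n]_q = 0$ and no flow with $a_n > 0$ can exist.
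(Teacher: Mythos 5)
Your proof is correct and follows essentially the same route as the paper: induct on $n$, peel off vertex $n$, decompose the weight as $wt_{q,0}(A)=(q-1)^{k-1}q^{a_n-k}wt_{q,0}(A')$, use $\od_i=i$ for the neighbors $i<d_n$ of $n$ to absorb the shifted netflow into the factor $q^{if_i}$, and reduce everything to the simplex sum $S$. The only divergence is in evaluating $S$ (the paper's Lemma~\ref{lem:t=0}): the paper uses induction on the number of parts with a telescoping sum, while you use a telescoping product of ordinary generating functions in $z$; both computations are valid.
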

	Note that when $a_1=a_2=\dots = a_n=1$, this formula gives a $q$-analogue for the number of increasing spanning trees on $G$.

	We will first need the following lemma.
	\begin{lemma} \label{lem:t=0}
		For integers $c\geq 1$ and $k \geq 1$, let $\Delta = \Delta(k,c) = \{(b_0, \dots, b_{k-1}) \mid \sum b_i = c\}$. For any $B \in \Delta \cap \ZZ^{k}$, define
		\[wt_{q,0}(B) = (q-1)^{\#\{b_i > 0\}-1} \prod_i wt_{q,0}(b_i).\]
		Then
		\[\sum_{B \in \Delta \cap \ZZ^{k}} q^{b_1+2b_2+\cdots + (k-1)b_{k-1}}wt_{q,0}(B) = q^{k(c-1)}[k]_q.\]
	\end{lemma}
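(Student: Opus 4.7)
The plan is to convert the sum into a coefficient extraction from a telescoping product of generating functions. First I would rewrite the weight explicitly: if $B = (b_0, \dots, b_{k-1})$ has $r \geq 1$ positive entries, then
\[
wt_{q,0}(B) = (q-1)^{r-1} q^{c-r},
\]
so that the summand $q^{\sum_{i=0}^{k-1} i b_i} wt_{q,0}(B)$ equals $\frac{1}{q-1}$ times $(q-1)^r q^{c-r} q^{\sum i b_i}$. The key observation is that this latter expression factors completely over positions, so after pulling out $\frac{1}{q-1}$ the sum becomes $[x^c]\prod_{i=0}^{k-1} g_i(x)$, where
\[
g_i(x) = 1 + \sum_{b \geq 1} (q-1)\, q^{b-1} q^{ib} x^b.
\]
(The all-zeros term contributes only to $[x^0]$, which is irrelevant since $c \geq 1$.)

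Next I would simplify each $g_i(x)$ by summing the geometric series, which yields
\[
g_i(x) = \frac{1 - q^i x}{1 - q^{i+1} x}.
\]
The product over $i = 0, 1, \dots, k-1$ then telescopes:
\[
\prod_{i=0}^{k-1} g_i(x) = \frac{1-x}{1-q^k x}.
\]
Extracting the coefficient of $x^c$ gives $q^{kc} - q^{k(c-1)} = q^{k(c-1)}(q^k-1)$, and dividing by $q-1$ yields $q^{k(c-1)} [k]_q$, as desired.

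The only real step that requires care is the algebraic simplification of $g_i(x)$: one must check that the numerator $q - q \cdot q^{i+1} x + (q-1) q^{i+1} x$ collapses to $q - q^{i+1} x$ so that the fraction reduces to $\frac{1 - q^i x}{1 - q^{i+1} x}$. Once this is verified, the telescoping and extraction are routine, so I do not anticipate any other obstacles. An alternative that I would fall back on, if the generating-function manipulation were to obscure rather than clarify, would be to induct on $k$ by conditioning on whether $b_{k-1} = 0$ (reducing to $\Delta(k-1, c)$) or $b_{k-1} = j > 0$ (reducing to $\Delta(k-1, c-j)$, scaled by an appropriate $(q-1) q^{(k-1)j + j - 1}$); but the generating-function route is cleaner and avoids messy case splits.
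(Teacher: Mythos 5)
Your proof is correct, and it takes a genuinely different route from the paper's. The paper argues by induction on $k$: it splits off the last coordinate $b_{k-1}=b$, applies the inductive hypothesis to $\Delta(k-1,c-b)$, and handles the three cases $b=0$, $0<b<c$, $b=c$ separately before summing a telescoping series over $b$. You instead observe that, after multiplying through by $q-1$, the summand $(q-1)^r q^{c-r}q^{\sum ib_i}$ factors completely over positions, so the whole sum is $\frac{1}{q-1}[x^c]\prod_{i=0}^{k-1}g_i(x)$ with $g_i(x)=1+\sum_{b\geq 1}(q-1)q^{b-1}q^{ib}x^b=\frac{1-q^ix}{1-q^{i+1}x}$, and the product telescopes to $\frac{1-x}{1-q^kx}$. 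All the computations check out (including the numerator collapse $q-q^{i+2}x+(q-1)q^{i+1}x = q-q^{i+1}x$ and the coefficient extraction $q^{kc}-q^{k(c-1)}$), and the all-zeros term is correctly discarded since $c\geq 1$. Your approach is arguably cleaner: the telescoping happens structurally in a product of rational functions rather than in a sum requiring a three-way case split. What the paper's induction buys is that it mirrors exactly the vertex-by-vertex induction used in the proof of Theorem~\ref{thm:t=0} where the lemma is applied, and it stays entirely within finite sums; your fallback sketch is essentially that argument, though as written it elides the boundary case $b_{k-1}=c$ that the paper treats separately.
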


	\begin{proof}	
		We induct on $k$. When $k=1$, $\Delta$ has a single point $c$, and both sides equal $q^{c-1}$. We therefore assume $k>1$.

		For $B = (b_0, \dots, b_{k-1})$, write $B' = (b_0, \dots, b_{k-2})$. Letting $b=b_{k-1}$, we have the decomposition
		\[\Delta \cap \ZZ^k = \bigcup_{b=0}^{c} (\Delta(k-1,c-b) \times \{b\}) \cap \ZZ^k\]
		(where $\Delta(k,0)$ is the set containing the single point $0 \in \ZZ^k$). Since
		\[wt_{q,0}(B) = \begin{cases}
		wt_{q,0}(B')&\text{if $b=0$,}\\
		wt_{q,0}(B')(q^b-q^{b-1})&\text{if $0<b<c$,}\\
		q^{c-1}&\text{if $b=c$,}
		\end{cases}\]
		we have by the inductive hypothesis that, for fixed $b$,
		\begin{align*}
		\sum_{\substack{B \in \Delta\cap \ZZ^k\\b_{k-1}=b} } q^{b_1+2b_2+\dots + (k-1)b_{k-1}} wt_{q,0}(B) =
		\begin{cases}
			q^{(k-1)(c-1)}[k-1]_q&\text{if $b=0$,}\\
			q^{(k-1)(c-b-1)}[k-1]_q \cdot q^{(k-1)b}(q^b-q^{b-1})&\text{if $0<b<c$,}\\
			q^{(k-1)c}\cdot q^{c-1}&\text{if $b=c$.}
		\end{cases}
		\end{align*}
		Summing over all $b$ gives the telescoping sum
		\begin{align*}
			q^{(k-1)(c-1)}[k-1]_q + \sum_{b=1}^{c-1} (q^{(k-1)(c-1)}[k-1]_q)(q^b-q^{b-1}) &+ q^{ck-1}\\
			&= q^{(k-1)(c-1)}[k-1]_q \cdot q^{c-1} + q^{ck-1}\\
			&= q^{k(c-1)}([k-1]_q+q^{k-1})\\
			&= q^{k(c-1)}[k]_q. \qedhere
		\end{align*}
	\end{proof}

	We now proceed with the proof of the theorem.

	\begin{proof}[Proof of Theorem~\ref{thm:t=0}]
		We may assume that $G$ is connected for both sides to be nonzero. We induct on $n$. When $n=1$, we must have $\od_1=1$, so both sides equal $wt_{q,0}(a_1) = q^{a_1-1}$.
		
		Now assume $n > 1$.	The vertex $n$ is adjacent to vertices $0, 1, \dots, \od_n-1$. For any lattice point $A \in \F_G(\b a)$, write 
		\[B = (a_{n0}, a_{n1}, \dots, a_{n,\od_n-1}) = (b_0, b_1, \dots, b_{\od_n-1})\]
		so that $B$ ranges over all lattice points in $\Delta(\od_n,a_n)$. For fixed $B$, the remaining flow $A'$ on the graph $G'$ obtained from $G$ by removing vertex $n$ lies in $\F_{G'}(\b a')$, where
		\[\b a' = (a_1, a_2, \dots, a_{n-1}) + (b_1, \dots, b_{\od_n-1}, 0, \dots, 0) = (a_1', \cdots, a_{n-1}').\]
		Note that the outdegree of a vertex $i<n$ in $G'$ is still $\od_i$. Since $wt_{q,0}(A) = wt_{q,0}(B) \cdot wt_{q,0}(A')$, we have by induction and Lemma~\ref{lem:t=0} that
		\begin{align*}
			\sum_{A \in \F_G(\b a)\cap \ZZ^E}wt_{q,0}(A) &= \sum_{B \in \Delta(\od_n,a_n) \cap \ZZ^{\od_n}} \left(wt_{q,0}(B) \cdot \sum_{A' \in \F_{G'}(\b a')} wt_{q,0}(A')\right)\\
			&= \sum_{B \in \Delta(\od_n,a_n) \cap \ZZ^{\od_n}} \left(wt_{q,0}(B) \cdot \prod_{i=1}^{n-1} q^{\od_i(a_i'-1)}[\od_i]_q\right)\\ 
			&= \sum_{B \in \Delta(\od_n,a_n) \cap \ZZ^{\od_n}}  q^{b_1+2b_2+\cdots + (\od_n-1)b_{\od_n-1}}wt_{q,0}(B)\cdot \prod_{i=1}^{n-1} q^{\od_i(a_i-1)}[\od_i]_q\\ 			
			&= q^{\od_n(a_n-1)}[\od_n]_q \cdot \prod_{i=1}^{n-1} q^{\od_i(a_i-1)}[\od_i]_q\\
			&= \prod_{i=1}^{n} q^{\od_i(a_i-1)}[\od_i]_q.\qedhere
		\end{align*}
	\end{proof}
	
	Specializing to the case when $G = K_{n+1}$ gives the following corollary.
	
	\begin{corollary}
		Let $\b a \in \ZZ^n_{>0}$. Then
		\[\operatorname {Ehr}_{q,0}(\F_{K_{n+1}}(\b a)) = q^{a_1+2a_2+\cdots+na_n - \binom{n+1}{2}} [n]_q!,\]
		where $[n]_q! = [n]_q[n-1]_q \cdots [1]_q$. In particular, $\operatorname {Ehr}_{q,0}(\F_{K_{n+1}}) = [n]_q!$.
	\end{corollary}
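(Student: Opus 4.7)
The plan is to obtain this corollary as a direct specialization of Theorem~\ref{thm:t=0} to the complete graph $K_{n+1}$. First I would observe that $K_{n+1}$ is a threshold graph (it is built by repeatedly adding a dominating vertex), and its degree sequence is $(d_0,d_1,\ldots,d_n) = (n,n,\ldots,n)$. Consequently, for every $i \in \{1,\ldots,n\}$ we have
\[
\bar d_i \;=\; \min\{d_i,\,i\} \;=\; \min\{n,\,i\} \;=\; i.
\]

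Next I would substitute these values of $\bar d_i$ into the formula from Theorem~\ref{thm:t=0}, giving
\[
\operatorname{Ehr}_{q,0}(\F_{K_{n+1}}(\b a)) \;=\; \prod_{i=1}^n q^{\bar d_i(a_i-1)}\,[\bar d_i]_q \;=\; \prod_{i=1}^n q^{i(a_i-1)}\,[i]_q.
\]
The product of the $q$-factorial pieces telescopes to $\prod_{i=1}^n [i]_q = [n]_q!$, and the exponent of $q$ collects as
\[
\sum_{i=1}^n i(a_i-1) \;=\; \sum_{i=1}^n i\,a_i \;-\; \sum_{i=1}^n i \;=\; a_1 + 2a_2 + \cdots + na_n - \binom{n+1}{2}.
\]
Combining these gives the claimed formula. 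The final sentence, the special case $\b a = (1,1,\ldots,1)$, follows because then $\sum_{i=1}^n i a_i = \binom{n+1}{2}$, so the prefactor of $q$ becomes $q^0 = 1$.

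There is essentially no obstacle here: all the work has already been done in Theorem~\ref{thm:t=0}, and the corollary is a bookkeeping computation of $\bar d_i$ for the complete graph together with the telescoping of exponents. The only point requiring a line of verification is that $K_{n+1}$ qualifies as a threshold graph labeled by reverse degree sequence, which is immediate since every vertex has the same degree $n$ and the construction by successive dominating vertices applies.
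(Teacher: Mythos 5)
Your proposal is correct and is exactly the intended argument: the paper presents this corollary as a direct specialization of Theorem~\ref{thm:t=0} to $G=K_{n+1}$, where $\bar d_i=\min\{n,i\}=i$, and the rest is the same bookkeeping of exponents you carry out.
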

	
	\begin{remark}
		The case $\Ehr_{q,0}(\mathcal{F}_{K_{n+1}}(-n,1,\ldots,1)) = [n]_q!$ was known by combining Theorem~\ref{thm:1} with \eqref{case:q0}. There is an elegant proof of this result by Levande \cite{PL} who defined a function $\varphi$ from integer flows on $K_{n+1}$ with netflow $(-n,1,\ldots,1)$ to permutations in $\Sn$ and used a sign-reversing involution to show that $\sum_{A \in \varphi^{-1}(w)} wt_{q,t}(A) = q^{\inv(w)}$. Wilson \cite[\S 5]{AW} extended this involution to the case $\Ehr_{q,0}(\mathcal{F}_{K_{n+1}}({\bf a}))$ where $a_i\in \{0,1\}$. In contrast with these proofs, our proof is inductive and does not use involutions. 
	\end{remark}

\section{Calculating the $(q, q^{-1})$-Ehrhart function} 
\label{sec:qq^{-1}}

	In this section, we give a product formula for the weighted Ehrhart function of the flow polytope $\F_G(\b a)$ when $G$ is a threshold graph and $t=q^{-1}$. When specialized to the case $G = K_{n+1}$, this proves a conjecture of  Armstrong et al. \cite{AGHRS}.

	From Theorem~\ref{thm:t=1}, we know that $\operatorname {Ehr}_{q,q^{-1}}(\F_G(\b a))$ should specialize to the number of spanning trees of $G$ when $q=1$ and $\b a = 1$. In fact, for threshold graphs $G$, there is a simple product formula for the number of spanning trees. Let $c_i = \#\{j \mid d_j \geq i\}$. In other words, $(c_1, c_2, \dots, c_{n})$ is the conjugate partition to $d(G)$.
	
	\begin{proposition} \label{prop:spt-prod}
		Let $G$ be a threshold graph on $0, 1, \dots, n$. The number of spanning trees of $G$ is
		\[c_2c_3 \cdots c_n = \prod_{i \colon 0<i<d_i} (d_i+1) \cdot \prod_{i \colon d_i<i}d_i.\]
	\end{proposition}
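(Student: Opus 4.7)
The plan is to prove both equalities in the statement. If $d_n = 0$, then $G$ is disconnected (vertex $n$ is isolated), so the number of spanning trees is $0$; both remaining expressions also vanish, since $c_n = 0$ (no $d_j \geq n$ because $d_0 < n$) and the factor $d_n = 0$ appears in $\prod_{d_i < i} d_i$. Henceforth assume $G$ is connected, so $d_0 = n$ and $d_n \geq 1$.

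For the first equality (number of spanning trees $= c_2 c_3 \cdots c_n$), the plan is to apply Kirchhoff's matrix-tree theorem together with the classical identification (essentially due to Merris) of the nonzero Laplacian eigenvalues of a threshold graph with the conjugate partition of its degree sequence. This spectral identity is proved by induction on the threshold construction: adding an isolated vertex appends a zero to the spectrum and does not change the conjugate partition; adding a dominating vertex $v$ to $G'$ gives
\[
L(G) = \begin{pmatrix} n & -\mathbf{1}^T \\ -\mathbf{1} & L(G') + I \end{pmatrix},
\]
and one checks that each eigenvector $x$ of $L(G')$ orthogonal to $\mathbf{1}$ lifts via $(0, x)^T$ to an eigenvector of $L(G)$ with eigenvalue $\lambda + 1$, while the complementary $2$-dimensional subspace spanned by the all-ones vector and the indicator of $v$ contributes eigenvalues $0$ and $n+1$. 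Since $G$ is connected, $c_1 = n+1$, and Kirchhoff gives $\frac{1}{n+1} c_1 c_2 \cdots c_n = c_2 \cdots c_n$.

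For the second equality, the key structural fact is that the sequence $d_i - i$ is strictly decreasing (since $d$ is weakly decreasing and $i$ strictly increasing) and attains the value $0$ at a unique index $i^*$; uniqueness is immediate, and existence follows from the identity $d_i = \min(d_i, i) + |\{l > i : d_l > i\}|$, which rules out ``skipping'' $0$. For connected $G$ this gives $i^* \in \{1, \ldots, n\}$, so $A := \{i : 0 < i < d_i\} = \{1, \ldots, i^* - 1\}$ and $B := \{i : d_i < i\} = \{i^* + 1, \ldots, n\}$. It then suffices to prove $c_k = d_{k-1} + 1$ for $2 \leq k \leq i^*$ and $c_k = d_k$ for $i^* + 1 \leq k \leq n$. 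The first follows by a direct count using the explicit description of threshold neighborhoods: setting $i = k-1 < i^*$ gives $d_i > i$, so every $l \leq i$ satisfies $d_l \geq k$ (contributing $i+1$), and a neighborhood count shows exactly $d_i - i$ additional $l > i$ have $d_l > i$. For the second, with $k > i^*$ so that $N(k) = \{0, \ldots, d_k - 1\}$, it suffices to show $\{j : d_j \geq k\} = N(k)$; the inclusion $\supseteq$ is another neighborhood count, while $\subseteq$ uses the nested closed-neighborhood property of threshold graphs ($d_j \geq d_l$ implies $N(l) \cup \{l\} \subseteq N(j) \cup \{j\}$): if $d_j \geq k > d_k$, then $N(k) \cup \{k\} \subseteq N(j) \cup \{j\}$, forcing $k \in N(j)$ (as $j \neq k$) and hence $j \in N(k)$.

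The main obstacle is the reverse inclusion in the second identity, which crucially exploits the nested closed-neighborhood property characterizing threshold graphs. Once this property is invoked, the remaining counts are routine.
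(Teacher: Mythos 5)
Your overall route coincides with the paper's, which offers no details beyond the remark that the proposition ``is a direct application of the Matrix-Tree Theorem'' (deferring a combinatorial proof to a reference); you are supplying exactly the missing details: Merris's theorem that the nonzero Laplacian eigenvalues of a threshold graph are the parts of the conjugate degree sequence (your induction on the threshold construction is the standard proof and is correct), followed by a direct verification of the second equality. The identity $d_i = \min(d_i,i) + \#\{l>i : d_l>i\}$, the existence and uniqueness of $i^*$ with $d_{i^*}=i^*$, the reduction to $c_k = d_{k-1}+1$ for $2\le k\le i^*$ and $c_k=d_k$ for $k>i^*$, and the two neighborhood counts all check out, as does the disconnected case.

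There is, however, one false intermediate claim. The ``nested closed-neighborhood property'' as you state it --- $d_j\ge d_l$ implies $N(l)\cup\{l\}\subseteq N(j)\cup\{j\}$ --- fails in threshold graphs: for the threshold graph with reverse degree sequence $(4,3,2,2,1)$ (the paper's example $G(1,0,1,0)$ after relabeling), take $j=1$, $l=4$; then $d_j=3>1=d_l$ but $N(4)\cup\{4\}=\{0,4\}\not\subseteq\{0,1,2,3\}=N(1)\cup\{1\}$. The correct vicinal-preorder statement only gives $N(l)\subseteq N(j)\cup\{j\}$, which does not force $l\in N(j)$, so your deduction ``$k\in N(j)$'' does not follow from the cited property. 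Fortunately the conclusion you need is true and has a one-line direct proof that bypasses the lemma entirely: if $d_j\ge k>d_k$, then $j\ge k$ would give $d_j\le d_k<k\le d_j$, a contradiction, so $j<k\le d_j$; hence $d_j>j$, so $N(j)=\{0,\dots,d_j\}\setminus\{j\}$ contains $k$, i.e.\ $j\in N(k)$. With that substitution your argument is complete.
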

	
	This is a direct application of the Matrix-Tree Theorem; see also \cite{chestnut} for a combinatorial proof. Note that when $G$ is the complete graph, we recover Cayley's formula $(n+1)^{n-1}$ for the number of spanning trees of $K_{n+1}$.
	
	We now state the main result of this section. Observe that when $t=q^{-1}$, the weights specialize to
	\[wt_{q, q^{-1}}(A) = (-(1-q)(1-q^{-1}))^{\#\{a_{ij}>0\}-n}\prod_{n\geq i>j \geq 0}wt_{q,q^{-1}}(a_{ij}),\]where
	\[wt_{q,q^{-1}}(b) = \begin{cases}\frac{q^b-q^{-b}}{q-q^{-1}}&\text{if $b>0$,}\\1&\text{if $b=0$.}\end{cases}\]
	Also recall that $\od_i = \min\{d_i,i\}$ is the outdegree of vertex $i$.
	\begin{theorem} \label{thm:q-1}
		Let $G$ be a threshold graph and $\b a \in \ZZ^n_{>0}$. Then
		\[\operatorname {Ehr}_{q,q^{-1}}(\F_G(\b a)) = q^{-F}\prod_{i=1}^n b_i(q),\]
		where $F = \sum_{i=1}^n \od_ia_i -n$ and 
		\[b_i(q) = \begin{cases}
			[(i+1)a_i+\sum_{j=i+1}^{d_i}a_j]_q&\text{if }d_i>i,\\
			[a_i]_{q^{i+1}}&\text{if }d_i = i,\\
			[a_i]_{q^{d_i+1}}[d_i]_q&\text{if }d_i  < i.
		\end{cases}\]
	\end{theorem}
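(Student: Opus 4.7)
The proof will proceed by induction on $n$, paralleling the structure of the proof of Theorem~\ref{thm:t=0}. The base case $n = 1$ is immediate: $\F_G(a_1)$ is a single lattice point supporting a flow of size $a_1$ on the unique edge, with weight $wt_{q,q^{-1}}(a_1) = q^{-(a_1-1)}[a_1]_{q^2}$, matching the formula with $F = a_1 - 1$ and $b_1(q) = [a_1]_{q^2}$ (since $d_1 = 1 = i$).

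For the inductive step, I would condition on the flows $B = (b_0, \dots, b_{d_n-1})$ on the $\od_n = d_n$ edges incident to vertex $n$, subject to $\sum_j b_j = a_n$. The remaining flow then lies in $\F_{G'}(\b a')$, where $G' = G \setminus \{n\}$ is again a threshold graph and $a_i' = a_i + b_i$ for $1 \le i < d_n$, $a_i' = a_i$ otherwise. Since $i < d_n$ forces $d_i > i$ (as $n$ is a neighbor of $i$ with $n>i$), we have $\od_i = \od_i' = i$ for such $i$, which yields
\[F' \;=\; F + 1 - d_n a_n + \sum_{i=1}^{d_n - 1} i\, b_i.\]
Moreover $b_i'(q) = b_i(q)$ for $i \ge d_n$, because neither $d_i$, $a_i$, nor the $a_j$'s with $j \in [i+1, d_i]$ are modified for such $i$. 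Together with the inductive hypothesis, the theorem reduces to the identity
\[\sum_B wt_B(B)\, q^{-\sum_{i} i\, b_i}\, \prod_{i=1}^{d_n - 1} b_i'(q) \;=\; q^{1 - d_n a_n}\, b_n(q)\, \prod_{i=1}^{d_n - 1} b_i(q),\]
where $wt_B(B) = (-(1-q)(1-q^{-1}))^{\#\{b_j > 0\} - 1} \prod_j wt_{q,q^{-1}}(b_j)$.

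Establishing this identity is the main technical step, playing for $t = q^{-1}$ the role that Lemma~\ref{lem:t=0} plays for $t = 0$. Using $\sum_j b_j = a_n$, the argument $N_i'$ of $b_i'(q) = [N_i']_q$ (in the generic subcase $d_i > i + 1$) can be written as $N_i' = N_i + i\, b_i + a_n - s_i - a_{d_i}$, where $N_i$ is the argument of $b_i(q)$ and $s_i = \sum_{j < i} b_j$. I would then prove the key identity by an inner induction on $d_n$, peeling off the last coordinate $b_{d_n - 1}$ and summing its contribution value by value, as in Lemma~\ref{lem:t=0}. The $q$-integer identity $[a+b]_q = [a]_q + q^a [b]_q$ should convert the remaining sum into a telescoping form matching the right-hand side.

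The hardest part will be handling the various degenerate cases. Specifically: (i) when $d_i = i + 1$ for some $i < d_n$, the factor $b_i'(q)$ takes the special form $[a_i + b_i]_{q^{i+1}}$ rather than a plain $[\cdot]_q$, so the telescoping must be adjusted; (ii) when $d_n = n$ the formula $b_n(q) = [a_n]_{q^{n+1}}$ lacks the extra $[d_n]_q$ factor seen when $d_n < n$, reflecting the absence of any vertex $j \geq d_n$ available to serve as a non-neighbor of $n$. Additionally, the signed prefactor $(-(1-q)(1-q^{-1}))^{\#\{b_j>0\}-1}$ produces alternating contributions that must be tracked carefully; I would group the $B$-sum by the support of $B$ (equivalently, induct on $\#\{b_j > 0\}$) so that the sign structure produces cancellations analogous to those in Lemma~\ref{lem:t=0}.
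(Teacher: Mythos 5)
Your reduction is set up correctly and the bookkeeping checks out: removing vertex $n$, conditioning on the flow vector $B\in\Delta(\od_n,a_n)$ on its incident edges, verifying $b_i'(q)=b_i(q)$ for $i\ge d_n$, and computing $F'=F+1-d_na_n+\sum_i i\,b_i$ all match what a correct proof requires. This is also essentially the paper's strategy, except that the paper folds your ``inner induction on $d_n$, peeling off the last coordinate $b_{d_n-1}$'' into the main induction: it inducts on $n$ and $d_n$ simultaneously, removing the single edge $(n,m)$ with $m=d_n-1$ and splitting flows according to $k=a_{n,m}\in\{0,1,\dots,a_n\}$, which produces a three-term recurrence (delete the edge; contract it with full flow $k=a_n$; intermediate $0<k<a_n$). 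Your degenerate cases (i) and (ii) are exactly the two cases ($d_n<n$ versus $d_n=n$) that the paper treats separately at the end of its proof.

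The gap is that the entire content of the theorem is concentrated in the identity you label ``the main technical step,'' and your plan for it does not go through as described. After one step of your inner induction (fixing $b_{d_n-1}=k$ and summing over the remaining coordinates), what remains is precisely a single-variable identity of the form
\[
[a]_{q^{d+1}}[d]_q[z+a]_q = q^a[a]_{q^d}[d-1]_q[z]_q + q^{a-1}wt_{q,q^{-1}}(a)[z+da]_q - q^a(1-q)(1-q^{-1})\sum_{k=1}^{a-1}wt_{q,q^{-1}}(k)[a-k]_{q^d}[d-1]_q[z+dk]_q,
\]
which is the paper's Lemma~\ref{lemma q}. Each summand is a product of three $q$-integers whose arguments are linear in $k$ with different slopes ($\pm 2$ coming from $wt_{q,q^{-1}}(k)=q^{1-k}[k]_{q^2}$, and $d$ from each of $[a-k]_{q^d}$ and $[z+dk]_q$), so the sum does not collapse under $[a+b]_q=[a]_q+q^a[b]_q$ into a telescope; the paper proves it by a genuine generating-function computation, showing that $f(x)\bar g(x)-h(x)$ and $f(x)\bar g(q^dx)-h(qx)$ both equal the constant $1/(1-q)^3$. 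Until you supply a proof of this identity --- or of your multivariable version over $\Delta(\od_n,a_n)$, including the sign bookkeeping from the prefactor $(-(1-q)(1-q^{-1}))^{\#\{b_j>0\}-1}$ --- the argument is incomplete; everything else in your proposal is correct framing around this one unproven lemma.
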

	
	Before we get to the proof, note what happens when we specialize $a_1=a_2=\cdots = a_n=1$. In this case,
	\[b_i(q) = \begin{cases}[d_i+1]_q & \text{if $d_i>i$,}\\1 & \text{if $d_i=i$,}\\ [d_i]_{q}&\text{if $d_i<i$,}\end{cases}\]
	so Theorem~\ref{thm:q-1} gives a $q$-analogue of Proposition~\ref{prop:spt-prod} in this case.
	
	\begin{proof}[Proof of Theorem~\ref{thm:q-1}]
		We may assume $G$ is connected and induct on $n$ and $d_n$. When $n=1$, we have $d_1=1$, and $wt_{q,q^{-1}}(a_1) = q^{1-a_1}[a_1]_{q^2}$, so assume $n > 1$.
		
		If $d_n = 1$, let $G'$ be the threshold graph obtained by removing vertex $n$. Then any flow $A \in \F_G(\b a)$ can be obtained from a flow in $\F_{G'}(a_1, \dots, a_{n-1})$ by adding the vertex $n$ and a single edge with flow $a_n$ from $n$ to $0$. Hence
		\[\Ehr_{q,q^{-1}}(\F_G(\b a)) = \Ehr_{q,q^{-1}}(\F_{G'}(a_1, \dots, a_{n-1})) \cdot wt_{q,q^{-1}}(a_n) = q^{-F'} q^{1-a_n}[a_n]_{q^2} \prod_{i=1}^{n-1} b_i'(q),\]
		where $b_i'$ and $F'$ are the corresponding values of $b_i$ and $F$ for $G'$. But $b_i'(q) = b_i(q)$ for $i<n$, $b_n(q) = [a_n]_{q^2}[1]_q = [a_n]_{q^2}$, and $F = F' + a_n-1$, so the right side is $q^{-F}\prod_{i=1}^n b_i(q)$, as desired.
		
		Now suppose $d_n-1=m>0$. Then vertex $n$ is adjacent to $0, 1, \dots, m$. Let $G'$ be the threshold graph obtained by removing vertex $n$, and let $G''$ be the threshold graph obtained from $G$ by removing only the edge from $n$ to $m$. Choose any $A \in \F_G(\b a)$, and let $k = a_{n, m}$.
		\begin{itemize}
			\item If $k=0$, then $A \in \F_{G''}(\b a)$.
			\item If $k = a_n$, then $A$ can be obtained from a flow in $\F_{G'}(a_1, \dots, a_m+a_n, \dots, a_{n-1})$ by adding vertex $n$ and flow $a_n$ from $n$ to $m$.
			\item If $0<k<a_n$, then $A$ can be obtained from a flow in $\F_{G''}(a_1, \dots, a_m+k, \dots, a_{n-1}, a_n-k)$ by adding flow $k$ from $n$ to $m$.
		\end{itemize}
		It follows that
		\begin{align*}
			\Ehr_{q,q^{-1}}&(\F_G(\b a))= \Ehr_{q,q^{-1}}(\F_{G''}(\b a)) + \Ehr_{q,q^{-1}}(\F_{G'}(a_1, \dots, a_{m}+a_n, \dots, a_{n-1})) \cdot wt_{q,q^{-1}}(a_n) \\
			&{}- \sum_{k=1}^{a_n-1} (1-q)(1-q^{-1}) \Ehr_{q,q^{-1}}(\F_{G''}(a_1, \dots, a_{m} + k, \dots, a_{n-1}, a_n-k))\cdot wt_{q,q^{-1}}(k).
		\end{align*}
		By induction, we may expand each of the terms on the right side. First note that for any of the terms involving $G''$, the corresponding value of $F$ is, for any $k=0, \dots, a_n-1$,
		\[F-a_nd_n + (a_n-k)(d_n-1) + km = F-a_n,\]
		while for the $G'$ term, the corresponding value of $F$ is $F-d_na_n + a_nm +1 = F-a_n+1$.
		
		Next observe that for $i \neq n, m$, the value of $b_i(q)$ is the same in all terms. Indeed, this is clear by the definition of $b_i(q)$ if $d_i \leq i$, so assume $d_i>i$. Then if $i<m$, vertex $i$ is adjacent to $n$, so $b_i(q) = [(i+1)a_i + \sum_{j>i} a_j]_q$, which is the same in all terms. If instead $m<i<n$, then $i$ is not adjacent to $n$, so $b_i(q) = [(i+1)a_i + \sum_{j=i+1}^{d_i}a_j]_q$ does not involve either $a_n$ or $a_{m}$, so it is also unchanged. It follows that we need only compare $b_n(q)$ and $b_m(q)$ for each of the terms.
		
		Therefore to prove the theorem, it suffices to show that, if $d_n<n$ (so $d_m = n > m+1$),
		\begin{align*}
			[a_n&]_{q^{d_n+1}}[d_n]_q [d_na_m + \textstyle\sum_{j=m+1}^n a_j]_q = \\
			&q^{a_n} [a_n]_{q^{d_n}}[d_n-1]_q[d_na_m + \textstyle\sum_{j=m+1}^{n-1} a_j]_q+q^{a_n-1}wt_{q,q^{-1}}(a_n)[d_n(a_m+a_n) + \textstyle\sum_{j=m+1}^{n-1} a_j]_q\\
			&{}-q^{a_n}(1-q)(1-q^{-1})\sum_{k=1}^{a_n-1}wt_{q,q^{-1}}(k)[a_n-k]_{q^{d_n}}[d_n-1]_q[d_n(a_m+k)+\textstyle\sum_{j=m+1}^{n-1}a_j]_q,
		\end{align*}
		while if $d_n=n$,
		\begin{align*}
			[a_n]_{q^{n+1}}&[na_{n-1}+ a_n]_q = \\
			&q^{a_n} [a_n]_{q^{n}}[n-1]_q[a_{n-1}]_{q^n}+q^{a_n-1}wt_{q,q^{-1}}(a_n)[a_{n-1}+a_n]_{q^n}\\
			&{}-q^{a_n}(1-q)(1-q^{-1})\sum_{k=1}^{a_n-1}wt_{q,q^{-1}}(k)[a_n-k]_{q^{n}}[n-1]_q[a_{n-1}+k]_{q^n}.
		\end{align*}		
		Both of these follow from Lemma~\ref{lemma q} below: the first follows by letting $a=a_n$, $d=d_n$, and $z = d_na_m + \sum_{j=m+1}^{n-1}a_j$, while the second follows by letting $a=a_n$, $d=n$, and $z=na_{n-1}$ and dividing both sides by $[n]_q$ (using the fact that $[nx]_q = [x]_{q^n}[n]_q$).
	\end{proof}
	
	\begin{lemma} \label{lemma q}
		Let $a$, $d$, and $z$ be positive integers. Then
		\begin{align*}
			[a]_{q^{d+1}}[d]_q[z+a]_q = {}&q^a[a]_{q^d}[d-1]_q[z]_q + q^{a-1}wt_{q,q^{-1}}(a)[z+da]_q \\
			&{}- q^a(1-q)(1-q^{-1})\sum_{k=1}^{a-1}wt_{q,q^{-1}}(k)[a-k]_{q^d}[d-1]_q[z+dk]_{q}.
		\end{align*}
	\end{lemma}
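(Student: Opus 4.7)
The identity is a pure algebraic identity in $q$ with integer parameters $a$, $d$, $z$, and I would prove it by direct manipulation rather than by combinatorial means. The first observation is that every occurrence of $z$ on either side appears inside an expression of the form $[z+c]_q = (q^{z+c}-1)/(q-1)$, so both sides can be written as $A(q)q^z + B(q)$ with $A, B$ independent of $z$. Therefore matching the coefficient of $q^z$ and the constant term reduces the lemma to two independent identities in $a$ and $d$ alone, each free of the variable $z$.

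Next, I would use the elementary identity $q^{dk}[a-k]_{q^d} = [a]_{q^d} - [k]_{q^d}$, which follows directly from $[n]_{q^d} = (q^{dn}-1)/(q^d-1)$, to split the sum $\sum_{k=1}^{a-1} wt_{q,q^{-1}}(k)\, q^{dk}[a-k]_{q^d}$ that appears in the $q^z$-coefficient identity. Both of the reduced identities can then be expressed as linear combinations of the three basic sums
\[\sum_{k=1}^{a-1} wt_{q,q^{-1}}(k),\qquad \sum_{k=1}^{a-1} wt_{q,q^{-1}}(k)[k]_{q^d},\qquad \sum_{k=1}^{a-1} wt_{q,q^{-1}}(k)[a-k]_{q^d}.\]
Expanding $wt_{q,q^{-1}}(k) = (q^k - q^{-k})/(q - q^{-1})$, each of these sums splits into a pair of geometric series in $q$ and $q^d$ and evaluates in closed form; for instance one finds $\sum_{k=1}^{a-1} wt_{q,q^{-1}}(k) = q^{2-a}[a-1]_q[a]_q/[2]_q$.

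After substituting these closed forms, each reduced identity becomes a rational-function identity in $q$ (with $v = q^d$ treated as an independent indeterminate) that can be verified by clearing denominators and expanding. The main obstacle is bookkeeping rather than insight: although each step is elementary, the many terms and common factors must be combined with care. A possibly cleaner alternative is to induct on $a$, with the trivial base $a=1$ (the T3 sum is empty, so the claim reduces to $[d]_q[z+1]_q = q[d-1]_q[z]_q + [z+d]_q$, which one checks by direct expansion), and an inductive step that examines the difference of the claimed identity at $a$ and at $a-1$, isolating only the new $k = a-1$ contribution to T3 and reducing the problem to a simpler algebraic verification.
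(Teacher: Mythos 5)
Your approach is correct in outline but genuinely different from the paper's. You split the identity according to its (affine-linear) dependence on $q^z$ and then evaluate the resulting $k$-sums in closed form as geometric series; the reduction via $q^{dk}[a-k]_{q^d}=[a]_{q^d}-[k]_{q^d}$ is valid, your sample evaluation $\sum_{k=1}^{a-1}wt_{q,q^{-1}}(k)=q^{2-a}[a-1]_q[a]_q/[2]_q$ checks out, and the remaining work is indeed routine (if lengthy) rational-function verification in $q$ and $q^d$. The paper instead encodes the dependence on $a$ in a generating variable $x$: the right-hand side is exhibited as the coefficient of $x^a$ in a product $f(x)g(x)$ of explicit rational functions (the sum over $k$ being precisely the convolution), the left-hand side as the coefficient of $x^a$ in $h(x)-q^zh(qx)$, and the proof concludes by checking that $f(x)\bar g(x)-h(x)$ and $f(x)\bar g(q^dx)-h(qx)$ both equal the constant $1/(1-q)^3$ --- two one-line computations. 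Both proofs exploit the same $A(q)q^z+B(q)$ structure (the paper via $g(x)=\bar g(x)-q^z\bar g(q^dx)$), but the generating-function packaging replaces your term-by-term bookkeeping with a single short rational-function identity, which is what your plan defers to ``clearing denominators and expanding.'' Your method buys elementarity at the cost of a substantially longer final verification, which you have not carried out.

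One caution on your proposed inductive alternative: passing from $a-1$ to $a$ does \emph{not} merely add the $k=a-1$ term to the sum, because every summand contains $[a-k]_{q^d}$ and hence changes with $a$ (one has $[a-k]_{q^d}=[a-1-k]_{q^d}+q^{d(a-1-k)}$, so each existing term contributes to the difference). The induction can still be organized, but the step is more involved than ``isolating only the new contribution,'' so as stated that alternative has a gap; your primary route does not.
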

	\begin{proof}
		We compute
		\begin{align*}
			f(x) &= 1-q(1-q)(1-q^{-1})\sum_{k \geq 1}q^{k}[k]_{q^d}[d-1]_qx^k \\
			&= 1+(1-q)(1-q^{d-1})\sum_{k \geq 1}[k]_{q^d}(qx)^k\\
			&=1+\frac{(1-q)(1-q^{d-1})qx}{(1-qx)(1-q^{d+1}x)}\\
			&=\frac{(1-q^2x)(1-q^dx)}{(1-qx)(1-q^{d+1}x)}
		\end{align*}
		and
		\begin{align*}
			g(x) &= \frac{-[z]_q}{q(1-q)(1-q^{-1})} + \sum_{k \geq 1}q^{k-1}wt_{q,q^{-1}}(k)[z+dk]_qx^k\\
			&= \frac{1-q^z}{(1-q)^3} + \sum_{k \geq 1} [k]_{q^2}[z+dk]_qx^k\\
			&= \bar g(x) -q^z \bar g(q^dx),
		\end{align*}
		where
		\begin{align*}
			\bar g(x) &= \frac{1}{(1-q)^3} + \frac{1}{1-q} \cdot \sum_{k \geq 1} [k]_{q^2}x^k\\
			&= \frac{1}{(1-q)^3} + \frac{x}{(1-q)(1-x)(1-q^2x)}\\
			&= \frac{(1-qx)^2}{(1-q)^3(1-x)(1-q^2x)}.
		\end{align*}

		For $a \geq 1$, the desired right hand side is the coefficient of $x^a$ in $f(x)g(x)=f(x)\bar g(x) - q^z f(x)\bar g(q^dx)$. Since
		\[\sum_{a \geq 1}[a]_{q^{d+1}}[d]_q[z+a]_qx^a = h(x) - q^z h(qx),\]
		where 
		\[
		h(x) = \sum_{a \geq 1}\frac{[a]_{q^{d+1}}[d]_qx^a}{1-q}\\
		= \frac{(1-q^d)x}{(1-q)^2(1-x)(1-q^{d+1}x)},
		\]
		it suffices to check that the difference between the two sides,
		\[(f(x)\bar g(x) - h(x))-q^z(f(x) \bar g(q^dx) - h(qx)),\]
		is independent of $x$. Indeed, we will show that
		\[f(x)\bar g(x) - h(x) = f(x)\bar g(q^dx)-h(qx) = \frac{1}{(1-q)^3}.\]
		This is straightforward:
		\begin{align*}
			f(x)\bar g(x)-h(x) &= \frac{(1-qx)(1-q^dx)}{(1-q)^3(1-x)(1-q^{d+1}x)} -\frac{(1-q^d)x}{(1-q)^2(1-x)(1-q^{d+1}x)}\\
			&= \frac{1}{(1-q)^3},
		\end{align*}
		and 
		\[f(x)\bar g(q^dx) =\frac{(1-q^2x)(1-q^{d+1}x)}{(1-q)^3(1-qx)(1-q^{d+2}x)} = f(qx)\bar g(qx),\]
		so $f(x)\bar g(q^dx)-h(qx) = f(qx)\bar g(qx)-h(qx) = \frac{1}{(1-q)^3}$ as well.	
	\end{proof}

	If we specialize to the case $G=K_{n+1}$, we arrive at the following corollary, conjectured by Armstrong et al. in \cite[Conjecture 7.1]{AGHRS}.
	\begin{corollary} \label{cor:q-1}
		Let $\b a \in \ZZ^n_{>0}$. Then
		\[\operatorname {Ehr}_{q,q^{-1}}(\F_{K_{n+1}}(\b a)) = q^{-F}[a_n]_{q^{n+1}}\prod_{i=1}^{n-1} [(i+1)a_i+\textstyle\sum_{j=i+1}^{n}a_j]_q,\]
		where $F = \sum_{i=1}^n ia_i -n$.
	\end{corollary}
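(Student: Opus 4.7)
The plan is to derive Corollary~\ref{cor:q-1} as a direct specialization of Theorem~\ref{thm:q-1}, which has already been established for arbitrary threshold graphs. Since the complete graph $K_{n+1}$ is a threshold graph (it arises by always adding a dominating vertex in the recursive construction), I can simply substitute its degree data into the product formula of Theorem~\ref{thm:q-1} and verify that the resulting expression matches the one in the statement of the corollary.

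First I would observe that every vertex of $K_{n+1}$ has degree $n$, so the degree sequence is $(d_0, d_1, \dots, d_n) = (n, n, \dots, n)$. Consequently $\bar{d}_i = \min\{d_i, i\} = \min\{n, i\} = i$ for each $i \in \{1, \dots, n\}$. Plugging this into the exponent defined in Theorem~\ref{thm:q-1} gives
\[
F \;=\; \sum_{i=1}^n \bar{d}_i\, a_i \;-\; n \;=\; \sum_{i=1}^n i\, a_i \;-\; n,
\]
which matches the exponent appearing in Corollary~\ref{cor:q-1}.

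Next I would determine each factor $b_i(q)$ from the trichotomy in Theorem~\ref{thm:q-1}. For $1 \leq i < n$ we are in the regime $d_i = n > i$, so
\[
b_i(q) \;=\; \Bigl[(i+1)a_i + \sum_{j=i+1}^{d_i} a_j\Bigr]_q \;=\; \Bigl[(i+1)a_i + \sum_{j=i+1}^{n} a_j\Bigr]_q.
\]
For $i = n$ we are in the regime $d_n = n = i$, so $b_n(q) = [a_n]_{q^{n+1}}$. Multiplying these factors together and combining with $q^{-F}$ yields exactly the right-hand side of Corollary~\ref{cor:q-1}.

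Since all of the substantive work is encapsulated in Theorem~\ref{thm:q-1}, there is no real obstacle beyond careful bookkeeping: one just needs to confirm that the case $d_i < i$ never occurs for $K_{n+1}$ (since $d_i = n \geq i$ always), so only the first two branches of the definition of $b_i(q)$ contribute. The main step of the proof is therefore a single clean substitution, and the corollary follows immediately.
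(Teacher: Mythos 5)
Your proposal is correct and is exactly the paper's own argument: the corollary is obtained by specializing Theorem~\ref{thm:q-1} to $G=K_{n+1}$, where $d_i=n$ for all $i$ forces $\bar d_i=i$, the branch $d_i>i$ for $i<n$, and the branch $d_i=i$ for $i=n$. Your bookkeeping of $F$ and of each factor $b_i(q)$ is accurate, so nothing further is needed.
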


\section{About the $(q, t)$-Ehrhart function} \label{sec:conj}

	In this section we look at the weighted Ehrhart series of the flow polytope $\F_G(-n,1,\ldots,1)$ when $G$ is a threshold graph with $n+1$ vertices. 

	\subsection{Conjectured $q,t$-positivity}

	By Haglund's result \cite{Hag}, the weighted Ehrhart series $\Ehr_{q,t}(\F_{K_{n+1}}(-n,1,\ldots,1))$ is the bigraded Hilbert series of the space of diagonal harmonics, so it must lie in $\mathbf{N}[q,t]$. By Example~\ref{ex:negqtGraph}, the polynomial $\Ehr_{q,t}(\F_G)$ for other graphs $G$ sometimes has negative coefficients. However, experimentation suggests some positivity properties of the polynomials $\Ehr_{q,t}(\F_G)$ for threshold graphs $G$ and netflow $(-n,1,\ldots,1)$. 

	\begin{conjecture} \label{conj1}
	Let $G$ be a threshold graph with $n+1$ vertices. Then
	\[
		\Ehr_{q,t}\bigl( \F_G(-n,1,\ldots,1)\bigr) \in \mathbf{N}[q,t].
	\]
	\end{conjecture}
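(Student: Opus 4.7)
The plan is to prove Conjecture~\ref{conj1} by strong induction on $n$, exploiting the recursive construction of threshold graphs. Because peeling off the vertex of smallest degree introduces non-unit values in the interior netflow coordinates, it seems necessary to strengthen the hypothesis to arbitrary positive integer netflow vectors $\b a \in \mathbf{Z}_{>0}^n$, in the same spirit as Theorems~\ref{thm:q1a}, \ref{thm:t=0}, and~\ref{thm:q-1}. The base case $n=1$ is immediate: $\Ehr_{q,t}(\F_G(\b a)) = wt_{q,t}(a_1) \in \mathbf{N}[q,t]$.

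For the inductive step, label $G$ by reverse degree sequence; the threshold property then forces the neighbors of vertex $n$ to form a prefix $\{0, 1, \ldots, d_n - 1\}$. For any integer flow $A \in \F_G(\b a)$, set $b_j = a_{nj}$ for $0 \le j < d_n$, so that $b_0 + \cdots + b_{d_n - 1} = a_n$. Fixing $\mathbf{b} = (b_0, \ldots, b_{d_n - 1})$, the restriction of $A$ to $G' := G \setminus \{n\}$ is an integer flow in $\F_{G'}(\b a')$, where $\b a'$ agrees with $(a_1, \ldots, a_{n-1})$ except that $a_j$ is replaced by $a_j + b_j$ for $0 < j < d_n$ (the contribution $b_0$ into vertex $0$ is absorbed by the sink). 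Since $G'$ is again a threshold graph on $n$ vertices, the strengthened inductive hypothesis applies to each summand and one obtains the recursion
\begin{equation*}
	\Ehr_{q,t}(\F_G(\b a)) = \sum_{\mathbf{b}} \bigl(-(1-q)(1-t)\bigr)^{\#\{j\,:\,b_j > 0\} - 1} \prod_{j=0}^{d_n - 1} wt_{q,t}(b_j) \cdot \Ehr_{q,t}(\F_{G'}(\b a')),
\end{equation*}
in which the Ehrhart factors on the right already lie in $\mathbf{N}[q,t]$.

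The main obstacle is the sign prefactor $(-(1-q)(1-t))^{k-1}$ with $k = \#\{j\,:\,b_j>0\}$, which renders many individual summands negative, so that positivity of the total must emerge through cancellation. Resolving this appears to require either (a) an algebraic lemma in the spirit of Lemma~\ref{lemma q} showing that, after grouping terms by the resulting $\b a'$, the inner sum over $\mathbf{b}$ collapses to a manifestly $(q,t)$-positive combination of products of $wt_{q,t}(\cdot)$; or (b) a combinatorial model realizing $\Ehr_{q,t}(\F_G(-n,1,\ldots,1))$ as a generating function $\sum_P q^{\stat_1(P)} t^{\stat_2(P)}$ over $G$-parking functions $P$ for statistics specializing to $(\codeg, \dinv)$ when $G = K_{n+1}$. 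Route (b) would amount to a \emph{threshold graph Shuffle Theorem} and is the deeper challenge; even identifying the correct $\dinv_G$ statistic is nontrivial, but one could try to pin it down by matching the three product formulas already established at $t=1$, $t=0$, and $t=q^{-1}$, and then verify the full identity inductively via the recursion displayed above.
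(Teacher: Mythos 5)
The statement you are attempting is Conjecture~\ref{conj1} of the paper; it is left \emph{open} there, supported only by computer verification up to $n=9$, so there is no proof in the paper to compare against. Your proposal is also not a proof. The deletion recursion you set up (peeling off vertex $n$ and summing over the distribution $\mathbf{b}$ of its outgoing flow among $0,1,\dots,d_n-1$) is correct and is exactly the mechanism the paper uses to establish the $t=0$ and $t=q^{-1}$ specializations (Theorems~\ref{thm:t=0} and~\ref{thm:q-1}), but the entire content of the conjecture --- showing that the signed sum over $\mathbf{b}$, with its prefactor $(-(1-q)(1-t))^{k-1}$, produces nonnegative coefficients --- is deferred to an unproven algebraic lemma ``in the spirit of Lemma~\ref{lemma q}'' or to an unconstructed combinatorial statistic. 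Neither route (a) nor route (b) is carried out, so nothing is established.

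More seriously, the strengthened inductive hypothesis you propose --- positivity of $\Ehr_{q,t}(\F_G(\b a))$ for every threshold graph $G$ and every $\b a \in \ZZ_{>0}^n$ --- is \emph{false}. The paper itself exhibits the threshold graph $G$ with degree sequence $(3,3,2,2)$ and $\b a = (3,3,3)$, for which $\Ehr_{q,t}(\F_G(-9,3,3,3)) = q^{12} + q^{11}t + \cdots + 2q^3t^4 - q^3t^3 \notin \mathbf{N}[q,t]$ (and this netflow is even weakly decreasing, so the restriction conjectured in \cite{GHX} for the complete graph does not transfer to threshold graphs). Consequently the induction cannot be run over all positive netflows: you would at minimum have to characterize exactly which pairs $(G',\b a')$ arise from iterating the recursion starting at $(-n,1,\dots,1)$ and prove positivity only for that class, and it is not clear that this class avoids the counterexample. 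As written, the proposal both rests on a false intermediate claim and omits the step that constitutes the actual difficulty of the conjecture.
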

	This conjecture has been verified up to $n=9$. 

	In \cite{GHX}, it was conjectured that $\Ehr_{q,t}(\F_{K_{n+1}}(\a)) \in \mathbf{N}[q,t]$ for integral netflows $\a$ satisfying $a_1\geq a_2 \geq \cdots \geq a_n \geq 0$. The analogous $q,t$-positivity statement for threshold graphs does not hold even though Theorem~\ref{thm:q-1} gives product formulas when $t=q^{-1}$.

	\begin{example}
		For the threshold graph $G$ with degree sequence $(3,3,2,2)$ and $\a = (-9,3,3,3)$, there are $16$ integral flows, and we have that
		\[
			\Ehr_{q,t}(\F_{G}(\a)) = q^{12} + q^{11}t + q^{10}t^2 + \dots + 2q^4t^3 + 2q^3t^4 -
			q^3t^3 \not\in \mathbb{N}[q,t].
		\]
	\end{example}

	Along with Theorem~\ref{thm:t=1}, Conjecture~\ref{conj1} suggests that there may be some statistic $\stat(\cdot)$ on spanning trees $T$ of $G$ or on $G$-parking functions such that $\sum_{T} q^{\inv(T)}t^{\stat(T)}$ equals $\Ehr_{q,t}(\F_G)$. We have so far been unable to find such a statistic (see Section~\ref{subsec:pmaj}).

	A spanning tree $T$ of a connected threshold graph $G$ with $n+1$ vertices is also a spanning tree of the complete graph. A stronger positivity result would be that each monomial $q^{\inv(T)}t^{\stat(T)}$ in $\Ehr_{q,t}(\F_G)$ appeared also in $\Ehr_{q,t}(\F_{K_{n+1}})$. Calculations up to $n=9$ suggest that this is also the case.

	\begin{conjecture} \label{conj:knminusg}
		Let $G$ be a threshold graph with $n+1$ vertices. Then
		\[
			\Ehr_{q,t}(\F_{K_{n+1}})- \Ehr_{q,t}(\F_G) \in \mathbf{N}[q,t].
		\]
	\end{conjecture}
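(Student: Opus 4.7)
The plan is to reduce the conjecture to an elementary single-edge extension between threshold graphs and then attempt to prove positivity at that elementary step. Since the edges of any threshold graph $G$, under the labeling by reverse degree sequence, form a downward-closed subset of the componentwise order on unordered pairs, any $G \subsetneq K_{n+1}$ admits a minimal non-edge $e$ such that $G \cup \{e\}$ is again a threshold graph. Iterating yields a chain $G = G_0 \subsetneq G_1 \subsetneq \cdots \subsetneq G_k = K_{n+1}$ of threshold graphs in which each $G_{m+1}$ is obtained from $G_m$ by adding a single edge. By Remark~\ref{rem:tglabeling}, the Ehrhart polynomial with netflow $(-n, 1, \ldots, 1)$ is invariant under permutations of the non-zero vertices, so any relabeling between steps leaves each $\Ehr_{q,t}(\F_{G_m})$ unchanged. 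By telescoping, it suffices to prove the elementary claim that $\Delta := \Ehr_{q,t}(\F_{G'}) - \Ehr_{q,t}(\F_G) \in \mathbf{N}[q,t]$ whenever $G$ and $G' = G \cup \{e\}$ are threshold graphs.

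Because the weight $wt_{q,t}(A)$ depends only on the flow $A$ and not on the ambient graph, $\Delta$ is the weighted sum of those integer flows on $G'$ whose support contains $e$. Stratifying by the value of the flow on $e$ expresses $\Delta$ as a signed combination of $(q,t)$-Ehrhart functions of flow polytopes on $G$ with modified netflow vectors (where polytopes with infeasible netflows contribute $0$ or, when interpreted as rational functions, cancel in the global sum). Individual contributions carry signs coming from the factor $(-(1-t)(1-q))^{\#\{a_{ij}>0\}-n}$, so substantial cancellation is required to yield a polynomial in $\mathbf{N}[q,t]$; this cancellation is the crux of the argument.

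To establish the elementary positivity, we suggest two avenues. The first is to construct a sign-reversing involution on the integer flows using $e$ whose fixed points carry manifestly positive weights, generalizing the involution techniques of Levande \cite{PL} and Wilson \cite{AW} for the $(q, 0)$ specialization. The second is to seek a positive combinatorial interpretation $\Ehr_{q,t}(\F_G) = \sum_P q^{\area(P)} t^{\stat(P)}$, summed over a natural subfamily of parking functions of size $n$ intrinsically associated to $G$; then $\Ehr_{q,t}(\F_{K_{n+1}}) - \Ehr_{q,t}(\F_G)$ would equal the positive sum over the complementary parking functions, and Conjecture~\ref{conj1} would follow as well. The principal obstacle is that the $(q,t)$-positivity of $\Ehr_{q,t}(\F_{K_{n+1}})$ itself was established only via the deep Shuffle Theorem of Carlsson--Mellit, so a compatible combinatorial refinement for general threshold subgraphs is expected to require similarly sophisticated machinery. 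One can at least verify the elementary positivity at $t = 1$, $0$, and $q^{-1}$ using the explicit formulas of Sections~\ref{sec:q1}, \ref{sec:q0}, and \ref{sec:qq^{-1}}; these specializations serve both as a sanity check and as a potential guide to identifying the correct statistic.
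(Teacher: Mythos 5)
This statement is a \emph{conjecture} in the paper, not a theorem; the authors offer no proof, only computational verification up to $n=9$ and the observation that it would follow from the stronger Conjecture~\ref{conj:poset}. Your proposal likewise does not constitute a proof. The telescoping reduction is sound: the poset $\mathcal{P}_n$ of connected threshold graphs is isomorphic to shifted diagrams in the staircase ordered by inclusion, so any threshold $G$ is connected to $K_{n+1}$ by a saturated chain of threshold graphs each adding one edge, and your ``elementary claim'' is exactly the cover-relation case of the paper's Conjecture~\ref{conj:poset}. But that elementary claim is where the entire difficulty lives, and you do not establish it. Your two suggested avenues are both left unexecuted: no sign-reversing involution is constructed (and the known involutions of Levande and Wilson handle only the $t=0$ specialization, where all surviving weights are monomials --- a far easier cancellation than the general $(q,t)$ case), and no statistic $\stat$ realizing $\Ehr_{q,t}(\F_G)$ as a positive sum over a subfamily of parking functions is exhibited. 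The paper explicitly reports searching for such a statistic without success, and its example showing that $S_{q,t}(G) \notin \mathbf{N}[q,t]$ for some $G$ indicates that any such statistic must depend on $G$ in a nontrivial way, which is a concrete obstruction to the ``complementary parking functions'' picture you sketch.

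In short: what you have written is an accurate reduction of Conjecture~\ref{conj:knminusg} to (a special case of) Conjecture~\ref{conj:poset}, together with a candid acknowledgment that the crux --- the cancellation of the signs coming from $(-(1-t)(1-q))^{\#\{a_{ij}>0\}-n}$ --- is unresolved. That is a reasonable research plan, but it leaves the statement exactly as open as the paper does. If you pursue this, the single-edge difference $\Delta$ written as a signed sum of Ehrhart functions with modified netflows is a good starting point, but be aware that even $\Ehr_{q,t}(\F_{K_{n+1}})\in\mathbf{N}[q,t]$ is only known through Haglund's identity combined with the Carlsson--Mellit proof of the Shuffle Conjecture, so an elementary positivity proof at each edge step would be a substantial advance beyond anything in this paper.
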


	The above computations suggest to check positivity of differences of $(q,t)$-Ehrhart functions between a threshold graph and a subgraph that is also a threshold graph. Let $\mathcal{P}_n$ be the poset of connected threshold graphs with vertices $0,1,\ldots,n$, where $H \preceq G$ if $H$ is a subgraph of $G$. This poset is isomorphic to the poset of shifted Young diagrams (or partitions with distinct parts) contained in $(n-1, n-2, \dots, 0)$, ordered by inclusion. For example the Hasse diagrams of the posets $\mathcal{P}_3$ and $\mathcal{P}_4$ are the following:

	\begin{center}
		\includegraphics[scale=0.6]{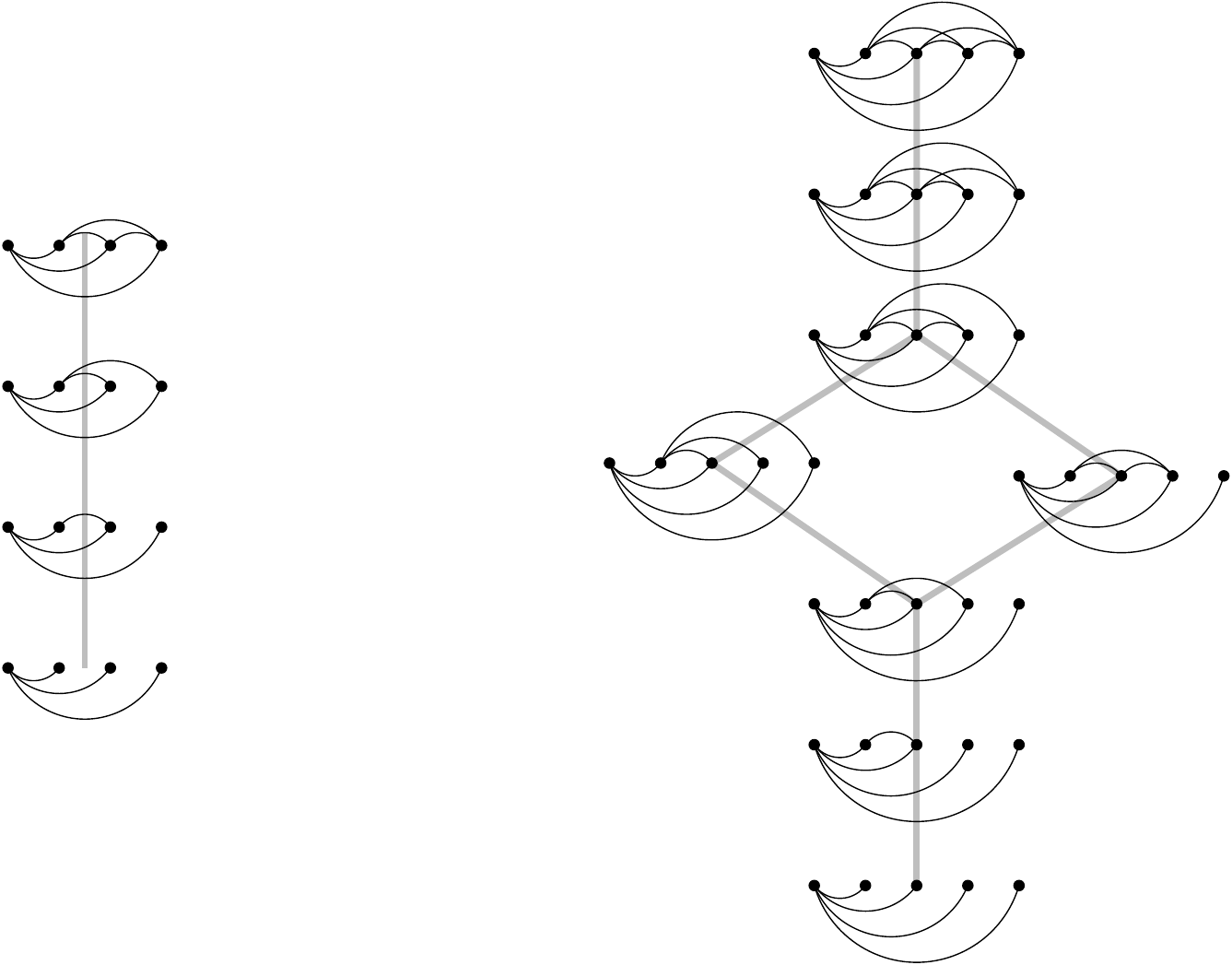}
	\end{center}

	Calculations up to $n=9$ suggest positivity of differences of $(q,t)$-Ehrhart functions along the cover relations of this poset. 

	\begin{conjecture} \label{conj:poset}
		For threshold graphs $G\succeq H$ in $\mathcal{P}_n$,
		\[
			\Ehr_{q,t}(\F_{G})- \Ehr_{q,t}(\F_H) \in \mathbf{N}[q,t].
		\]
	\end{conjecture}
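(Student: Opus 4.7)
The plan is to argue by induction along cover relations in $\mathcal{P}_n$, since coefficient-wise domination is transitive. If $G$ covers $H$, then $G = H \cup \{e\}$ for a single edge $e = (i,j)$ with $i > j$, and both are connected threshold graphs. Separating the integer flows on $G$ by the value $k = a_e$ on the new edge gives
\begin{equation*}
	\Ehr_{q,t}(\F_G) - \Ehr_{q,t}(\F_H) \;=\; \sum_{\substack{A \in \F_G \cap \mathbf{Z}^E \\ a_e \geq 1}} wt_{q,t}(A),
\end{equation*}
and restricting such an $A$ to $H$ yields a flow with netflow vector $\mathbf{a}'_k$ (obtained from $(1,\ldots,1)$ by subtracting $k$ at position $i$ and, when $j>0$, adding $k$ at position $j$) and one fewer positive support entry, contributing an extra factor of $-(1-q)(1-t)$ to the weight. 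The goal is then to express this sum as a positive combination of monomials in $q$ and $t$.

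Two parallel routes present themselves. Route (A) is combinatorial: find statistics $\operatorname{stat}_1, \operatorname{stat}_2$ defined on spanning trees of each threshold graph $G$ such that $\Ehr_{q,t}(\F_G) = \sum_T q^{\operatorname{stat}_1(T)} t^{\operatorname{stat}_2(T)}$, with the statistics compatible with edge addition---that is, a spanning tree of $H$ is also a spanning tree of $G$ with the same pair of statistic values. For $G = K_{n+1}$ the Haglund--Loehr $\area$ and $\dinv$ statistics on parking functions (transported to trees via any standard bijection) provide candidates, and generalizing them to all threshold graphs would simultaneously imply Conjectures~\ref{conj1}, \ref{conj:knminusg}, and \ref{conj:poset}. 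Route (B) is algebraic: iterate the displayed identity, or the deletion-type recursions used in the proof of Theorem~\ref{thm:q-1}, and use generating-function manipulations to combine the sign factors $-(1-q)(1-t) = q+t-1-qt$ into manifestly positive expressions. As sanity checks, each of the specializations $t=1$, $t=0$, $t=q^{-1}$ admits a direct positivity proof from the product formulas of Theorems~\ref{thm:t=1}, \ref{thm:t=0}, and \ref{thm:q-1}: spanning trees of $H$ embed into spanning trees of $G$ with the same inversion count (giving $t=1$), the outdegrees satisfy $\bar d_i^H \leq \bar d_i^G$ coordinate-wise (giving $t=0$, since products of coefficient-wise ordered polynomials with nonnegative coefficients preserve the ordering), and a similar factor-by-factor comparison handles $t=q^{-1}$ after tracking the exponent $F$.

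The main obstacle is controlling the sign factor $-(1-q)(1-t)$ in the full $(q,t)$ setting. Since $q+t-1-qt$ is not itself in $\mathbf{N}[q,t]$, positivity can emerge only after delicate cancellation across the values of $k$ and across different support patterns. The difficulty is compounded by two facts: for $k \geq 2$ the polytope $\F_H(\mathbf{a}'_k)$ has signed netflow at vertex $i$, taking us outside the positive-netflow regime of our main theorems (and its $(q,t)$-Ehrhart sum can become rational rather than polynomial), and Example~\ref{ex:negqtGraph} warns that positivity fails for general non-threshold graphs. Any successful argument must therefore exploit the threshold structure of \emph{both} $G$ and $H$ in an essential way; constructing a statistic-preserving injection between the parking-function models of $H$ and $G$ appears to be the most promising route.
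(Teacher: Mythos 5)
This statement is an open conjecture in the paper, not a theorem: the authors offer no proof, only computational verification up to $n=9$. Your proposal likewise does not prove it. What you have written is a research plan that correctly identifies the central obstruction --- the sign factor $-(1-q)(1-t)$ arising when the support of a flow drops below $n$ edges --- and then explicitly leaves that obstruction unresolved. Neither Route (A) nor Route (B) is carried out: Route (A) presupposes the existence of a pair of compatible statistics on spanning trees of threshold graphs, which is precisely what the paper says it has been unable to find (see the discussion after Conjecture~\ref{conj1} and Section~\ref{subsec:pmaj}, where the example with degree sequence $(6,6,6,6,5,5,4)$ shows that any such statistic cannot be independent of the ambient graph); Route (B) is a hope that cancellations work out, with no mechanism supplied. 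So there is a genuine gap, and it is the entire content of the conjecture.

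Your sanity checks at $t=1$, $t=0$, and $t=q^{-1}$ are essentially correct and worth recording: for $t=1$, spanning trees of $H$ are spanning trees of $G$ with the same $\inv$ (since $\inv$ does not reference the ambient graph), so $I_G(q)-I_H(q)=\sum_{T\ni e}q^{\inv(T)}\in\mathbf{N}[q]$; for $t=0$, adding the edge $(i,j)$ with $i>j$ replaces $[\bar d_i^H]_q$ by $[\bar d_i^H+1]_q$ in the product of Theorem~\ref{thm:t=0}, and the difference is coefficient-wise nonnegative. But these specializations cannot substitute for the full statement: a symmetric polynomial in $q,t$ can have negative coefficients while all three specializations remain positive, as the paper's own negative examples (e.g.\ $\Ehr_{q,t}(\F_G(-9,3,3,3))$ for the graph with degree sequence $(3,3,2,2)$) illustrate in closely related settings. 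If you want to make progress, the most concrete open subproblem your decomposition isolates is to show that for a single cover relation the signed sum $\sum_{k\ge 1}\bigl(-(1-q)(1-t)\bigr)\,wt_{q,t}(k)\,\Ehr_{q,t}(\F_H(\mathbf{a}'_k))$ plus $\Ehr_{q,t}(\F_H)$-type boundary terms lies in $\mathbf{N}[q,t]$; as you note, for $k\ge 2$ the netflow $\mathbf{a}'_k$ has a negative entry at an interior vertex, so none of the paper's theorems apply and new ideas are required.
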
 

	Note that Conjecture~\ref{conj:poset} implies Conjecture~\ref{conj:knminusg}. 

	Lastly, one might try to use the poset structure of $\mathcal{P}_n$ to refine further each $(q,t)$-Ehrhart series in the following way. For a threshold graph $G$ in $\mathcal{P}_n$, let 
	\[
		S_{q,t}(G)=\sum_{H\preceq G} \mu(H,G) \Ehr_{q,t}(\F_H),
	\]
	where $\mu(\cdot,\cdot)$ is the M\"obius function of $\mathcal{P}_n$. By M\"obius inversion we then have that 
	\[
		\Ehr_{q,t}(\F_G) = \sum_{H\preceq G} S_{q,t}(H).
	\]
	One might hope that $S_{q,t}(G)$ is $q,t$-positive in general, but this is not the case.
	\begin{example}
		Let $G$ be the threshold graph $G$ with degree sequence $(6,6,6,6,5,5,4)$. Then
		\[S_{q,t}(G) = q^{13} + q^{12}t + q^{11}t^2 + \dots + q^3t^2 + q^2t^3 - q^2t^2 \notin \mathbf{N}[q,t].\]
	\end{example}
	This shows that if the statistic $\stat(T)$ exists such that $\Ehr_{q,t}(\F_G) = \sum_T q^{\inv(T)}t^{\stat(T)}$, then it must depend on the underlying threshold graph $G$.

	\subsection{Positivity with Gorsky--Negut weight}

	One could explore generalizations to other positivity results for Tesler matrices. The \emph{alternant} $DH_n^{\varepsilon}$ is a certain subspace of $DH_n$ of dimension $\frac{1}{n+1}\binom{2n}{n}$ \cite{gh3}, the $n$th Catalan number. The bigraded Hilbert series of this subspace is the $q,t$-Catalan number $C_n(q,t)$. Gorsky and Negut \cite{GN} expressed $C_n(q,t)$ as a different weighted sum over the integral flows of $\F_{K_{n+1}}(-n,1,\ldots,1)$ (see Remark~\ref{rem:flow2Tesler} for translating from integral flows to Tesler matrices).

	\begin{theorem}[Gorsky--Negut \cite{GN}]
		\[
			C_n(q,t) = \sum_{A \in \F_{K_{n+1}} \cap \ZZ^E} wt'_{q,t}(A),
		\]
		where 
		\begin{equation} \label{eq:GNweight}
			wt'_{q,t}(A) = \prod_{\substack{i>1\\a_{i,i-1}>0}} ( wt_{q,t}(a_{i,i-1}+1) -
			wt_{q,t}(a_{i,i-1}) ) \prod_{\substack{i-1>j>0\\ a_{i,j}>0}} \left(-(1-t)(1-q) wt_{q,t}(a_{i,j})\right),
		\end{equation}
		for $wt_{q,t}(b)$ as defined in \eqref{eq:wt}.
	\end{theorem}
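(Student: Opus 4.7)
The plan is to proceed by induction on $n$, in the same spirit as the inductive arguments used for Theorem~\ref{thm:q-1} in Section~\ref{sec:qq^{-1}}. The starting point is to condition on the outgoing flows from vertex $n$, namely on the last row $(a_{n,n-1}, a_{n,n-2}, \ldots, a_{n,0})$ of the integer flow. Deleting vertex $n$ from $K_{n+1}$ and absorbing its outflow into the netflows at the remaining vertices reduces the problem to a weighted sum over flows on $K_n$ with a modified (and generally non-uniform) netflow vector. For the induction to close, one would therefore first need to formulate and prove a more general version of the identity valid for arbitrary positive netflow vectors, paralleling the passage from Theorem~\ref{thm:t=1} to Theorem~\ref{thm:q1a}.

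The technical heart of the proof is the telescoping behaviour of the subdiagonal weight. The quantity $wt_{q,t}(a+1) - wt_{q,t}(a)$ is designed precisely so that summing it against a geometric-type generating function collapses cleanly, in a manner parallel to (but more intricate than) the manipulations in the proof of Lemma~\ref{lemma q}. Concretely, once the contribution of the subdiagonal entry $a_{n,n-1}$ is summed out, the remaining entries of the last row --- each carrying the standard $-(1-t)(1-q) wt_{q,t}$ factor, together with the trivial weight on $a_{n,0}$ --- should conspire with the inductive hypothesis to produce a Catalan-style recursion for the right-hand side. A natural first check is to verify that the Gorsky--Negut weight behaves correctly at the specializations $t=1$, $t=0$, and $t=q^{-1}$, where both sides reduce to known product formulas: the telescoping sums reduce to concrete identities of the type used in the proof of Lemma~\ref{lem:t=0}, and the comparison with the $q$-Catalan, $q$-ballot, and $q$-binomial evaluations of $C_n(q,t)$ is elementary.

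The main obstacle I would expect is matching the resulting flow-polytope recursion against a recursion for $C_n(q,t)$ itself. The classical recursions for the $(q,t)$-Catalan number (Garsia--Haiman, Haglund's bounce recursion, or the Andrews--Garsia--Haiman $q,t$-analogue of the Catalan recursion) involve $q$-binomial coefficients whose origin in the flow picture is not transparent, and the signed factors $-(1-t)(1-q)$ on interior entries force the identification to proceed through nontrivial cancellations rather than a direct term-by-term bijection. After verifying the three specializations above, I would attempt to lift the identification to general $(q,t)$ by a rigidity argument exploiting the $(q,t)$-symmetry, bounded bidegree, and the verified specializations, or failing that, by a direct generating-function computation treating row sums as formal variables. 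An alternative route, closer to Gorsky and Negut's original strategy, is to interpret both sides as matrix elements of operators on a Fock space for the elliptic Hall algebra and realise the Tesler sum as a torus-equivariant trace on a flag Hilbert scheme; this line of attack is powerful but orthogonal to the polytopal methods developed here.
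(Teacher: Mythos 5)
First, a point of comparison: the paper does not prove this theorem. It is quoted from Gorsky and Negut, whose own argument identifies both sides with an equivariant character computation on flag Hilbert schemes via the elliptic Hall algebra; the paper only uses the statement to test (and refute) positivity of the analogous weighted sum for threshold graphs. So there is no internal proof to measure your sketch against, and the question is whether it could stand on its own. It cannot, in its current form.

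The gap is that you never close the argument. Conditioning on the last row of the flow and reducing to a (yet to be formulated and proved) version of the identity for arbitrary positive netflows is a reasonable opening move, and the telescoping of $wt_{q,t}(a+1)-wt_{q,t}(a)$ is the right structure to exploit; but the step you yourself flag as ``the main obstacle'' --- matching the resulting flow recursion against a recursion for $C_n(q,t)$ --- is the entire content of the theorem, and you leave it unresolved. Your proposed fallback, a rigidity argument from the specializations $t=1$, $t=0$, $t=q^{-1}$ together with $(q,t)$-symmetry and bounded bidegree, provably cannot work: the symmetric polynomial $qt(q-1)(t-1)(qt-1)$, for example, vanishes identically under all three specializations, so for $n$ large enough the difference of the two sides could contain such a factor and go undetected; three one-parameter slices cannot determine a two-variable polynomial of bidegree on the order of $\binom{n}{2}$. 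The remaining alternatives you list (a direct generating-function computation with row sums as formal variables, or reproducing the geometric argument) are names of strategies rather than arguments. As written, this is a research plan with an honest accounting of where it would get stuck, not a proof.
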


	In contrast with the evidence for Conjecture~\ref{conj1}, this weighted sum does not necessarily give a polynomial in $\mathbf{N}[q,t]$ for threshold graphs.

	\begin{example}
		For the threshold graph $G$ with degree sequence $(3,3,2,2)$, there are four integral flows in $\F_G$ with their respective Tesler matrices (see Remark~\ref{rem:flow2Tesler}): 
		\begin{center}
			\includegraphics{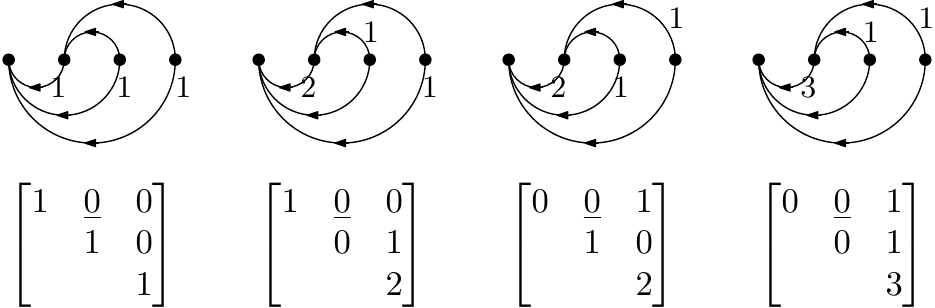}
		\end{center}
		The weighted sum $wt'_{q,t}(A)$ of these flows gives
		\[
			1 + (q+t-1) - (1-q)(1-t) - (q+t-1)(1-q)(1-t) = q^2+2qt+t^2-q^2t-qt^2 \not\in \mathbb{N}[q,t].
		\]
	\end{example}

	The Gorsky--Negut weight \eqref{eq:GNweight} is not as natural for threshold graphs $G$ as it is for the complete graph since $G$ might not contain the edge $(i, i-1)$. Instead, one could consider a weight  
	\[
		wt''_{q,t}(A) = \prod_{\substack{i>1\\a_{i,\bar d_i-1}>0}} ( wt_{q,t}(a_{i,\bar d_i-1}+1) -
		wt_{q,t}(a_{i,\bar d_i-1}) ) \prod_{\substack{\bar d_i-1>j>0\\a_{i,j}>0}} \left(-(1-t)(1-q) wt_{q,t}(a_{i,j})\right),
	\]
	where $\bar d_i - 1$ is the largest neighbor of $i$ less than $i$. Still, summing over this weight does not necessarily yield a polynomial in $\mathbb{N}[q,t]$.

	\begin{example}
		For the threshold graph $G$ with degree sequence $(5,5,5,3,3,3)$, we have $\bar d_2= 2$, and $\bar d_3 = \bar d_4 = \bar d_5 = 3$. There are $81$ integral flows in $\F_G$. The weighted sum of these flows gives
		\[
			\sum_{A \in \F_{G} \cap \ZZ^E} wt''_{q,t}(A) = q^7 + q^6t + q^5t^2 + \cdots + 3q^3t^2 + 3q^2t^3 - q^2t^2 \not\in \mathbb{N}[q,t].
		\]
	\end{example}

	Garsia and Haglund \cite{GHag} gave a weight over certain integral flows in $\F_{K_{n+1}}$ that yields the \emph{Frobenius series} of the space $DH_n$, a certain symmetric function that in the Schur basis has coefficients in $\mathbf{N}[q,t]$ (for more details see \cite[Ch. 2, Ch. 6]{Haglund1}). Using the same weight on flows in $\F_G$ for threshold graphs $G$ does not give symmetric functions with a Schur expansion with coefficients in $\mathbf{N}[q,t]$. 

	\subsection{A note on the statistic pmaj} \label{subsec:pmaj}

	Loehr and Remmel \cite{LR} defined a statistic $\pmaj$ on parking functions and showed that $(\dinv, \area)$ and $(\area, \pmaj)$ are equidistributed. Hence Theorem~\ref{HLhilb} implies that 
	\[\hilb_{q,t}(DH_n) = \sum_P q^{\area(P)} t^{\pmaj(P)}.\]

	One way to define $\pmaj$ is as follows. Parking functions have a natural partial order: $P \leq Q$ if $P(i) \leq Q(i)$ for all $i$. For the complete graph, the maximal parking functions are those with area 0, namely the bijections $Q \colon [n] \to \{0, 1, \dots, n-1\}$. For a maximal parking function $Q$, \[\pmaj(Q) = \sum_{i \colon Q(i) < Q(i+1)} (n-i),\]
	while for any other parking function $P$, $\pmaj(P) = \min_{Q>P} \pmaj(Q)$. Note that on the maximal parking functions,
	\[\sum_{Q \text{ maximal}} t^{\pmaj(Q)} = \hilb_{0,t}(DH_n) = [n]_t!\]
	(and on maximal parking functions, $\pmaj$ is easily seen to be equidistributed with major index on permutations).

	The $\area$ and $\codeg$ statistics coincide on parking functions, which suggests that, in accordance with Conjecture~\ref{conj1}, if there exists a statistic $\stat$ on $G$-parking functions such that
	\[\Ehr_{q,t}(\F_G) = \sum_P q^{\codeg(P)} t^{\stat(P)},\]
	then we should be able to construct $\stat$ to be analogous to $\pmaj$. Then we might expect to be able to define $\stat$ on $G$-parking functions such that
	\[\sum_{Q \text{ maximal}} t^{\stat(Q)} = \Ehr_{0,t}(\F_G) = [\bar d_i]_t!\]
	by Theorem~\ref{thm:t=0}, while for any other parking function $P$, $\stat(P) = \min_{Q>P} \stat(Q)$. This could simplify the task of defining $\stat$ since it would only need to be defined on the maximal $G$-parking functions. Initial computations suggest that it is possible to find such a statistic for small graphs, though we have not yet found a suitable statistic that works for all threshold graphs.


\begin{thebibliography}{10}

\bibitem{AGHRS}
D.~Armstrong, A.~Garsia, J.~Haglund, B.~Rhoades, and B.~Sagan.
\newblock Combinatorics of {T}esler matrices in the theory of parking functions
  and diagonal harmonics.
\newblock {\em J. Comb.}, 3(3):451--494, 2012.

\bibitem{Bergeron}
F.~Bergeron.
\newblock {\em Algebraic Combinatorics and Coinvariant Spaces}.
\newblock A K Peters/CRC Press, 2009.

\bibitem{bollobas}  B.~Bollob\'as.
\newblock {\em Modern graph theory}, Graduate Texts in
Mathematics, {S}pringer-{V}erlag, New York, 1998

\bibitem{CM}
E.~Carlsson and A.~Mellit.
\newblock A proof of the shuffle conjecture.
\newblock \arxiv{1508.06239}, 2015.

\bibitem{chestnut}
D.~Chestnut and D.~E.~Fishkind.
\newblock Counting spanning trees of threshold graphs.
\newblock \arxiv{1208.4125}, 2012.


\bibitem{GHag}
A.~M.~Garsia and J.~Haglund.
\newblock A polynomial expression for the character of diagonal harmonics.
\newblock {\em Ann. Combin.}, 19:693--703, 2015.


\bibitem{GHX}
A.~M.~Garsia, J.~Haglund, and G.~Xin.
\newblock Constant term methods in the theory of {T}esler matrices and
  {M}acdonald polynomial operators.
\newblock {\em Ann. Comb.}, 18:83--109, 2014.

\bibitem{gh1}
A.~M.~Garsia and M.~Haiman.
\newblock A graded representation model for {M}acdonald's polynomials.
\newblock {\em Proc. Nat. Acad. Sci. U.S.A.}, 90:3607--3610, 1993.

\bibitem{gh3}
A.~M.~Garsia and M.~Haiman.
\newblock A remarkable {$q,t$}-{C}atalan sequence and {$q$}-{L}agrange
  inversion.
\newblock {\em J. Algebraic Combin.}, 5:191--244, 1996.

\bibitem{gessel}
I.~M.~Gessel.
\newblock Enumerative applications of a decomposition for graphs and digraphs.
\newblock {\em Discrete Math.}, 139(1-3):257--271, 1995.
\newblock Formal power series and algebraic combinatorics (Montreal, PQ, 1992).

\bibitem{GN}
E.~Gorsky and A.~Negut.
\newblock Refined knot invariants and {H}ilbert schemes.
\newblock {\em J. Math. Pures Appl.}, 104:403--435, 2015.


\bibitem{Haglund2}
J.~Haglund.
\newblock Catalan paths and $q,t$-enumeration.
\newblock In M.~Bona, editor, {\em Handbook of Enumerative Combinatorics},
  Ch.~9, pages 679--751. CRC Press, Boca Raton, 2015.

\bibitem{Haglund1}
J.~Haglund.
\newblock {\em The {$q$},{$t$}-{C}atalan numbers and the space of diagonal
  harmonics},  Vol. 41. American Mathematical Soc., Providence, 2008.


\bibitem{Hag}
J.~Haglund.
\newblock A polynomial expression for the {H}ilbert series of the quotient ring
  of diagonal coinvariants.
\newblock {\em Adv. Math.}, 227:2092--2106, 2011.


\bibitem{HHLRU}
J.~Haglund, M.~Haiman, N.~Loehr, J.~B.~Remmel, and A.~Ulyanov.
\newblock A combinatorial formula for the character of the diagonal
  coinvariants.
\newblock {\em Duke Math. J.}, 126:195--232, 2005.

\bibitem{hl}
J.~Haglund and N.~Loehr.
\newblock A conjectured combinatorial formula for the {H}ilbert series for
  diagonal harmonics.
\newblock {\em Discrete Math.}, 298:189--204, 2005.

\bibitem{hai94}
M.~Haiman.
\newblock Conjectures on the quotient ring by diagonal invariants.
\newblock {\em J. Algebraic Combin.}, 3:17--76, 1994.

\bibitem{hai}
M.~Haiman.
\newblock Vanishing theorems and character formulas for the {H}ilbert scheme of
  points in the plane.
\newblock {\em Invent. Math.}, 149:371--407, 2002.

\bibitem{kreweras}
G.~Kreweras.
\newblock Une famille de polyn\^{o}mes ayant plusieurs propri\'et\'es
  \'enumeratives.
\newblock {\em Period. Math. Hungar.}, 11:309--320, 1980.

\bibitem{PL}
P.~Levande.
\newblock {\em Combinatorial Structures and Generating Functions of Fishburn
  Numbers, Parking Functions}.
\newblock PhD thesis, U. of Penn., 2012.

\bibitem{L}
N.~Loehr.
\newblock Combinatorics of $q,t$-parking functions.
\newblock {\em Adv. in Appl. Math}, 34:408--425, 2005.

\bibitem{LR}
N.~A.~Loehr and J.~B.~Remmel. 
\newblock Conjectured combinatorial models for the {H}ilbert series of
  generalized diagonal harmonics modules.
\newblock {\em Electron. J. Combin.}, 11(3), R68, 2004.


\bibitem{TG}
N.~VR Mahadev and U.~N.~Peled.
\newblock {\em Threshold graphs and related topics}, Vol.~56.
\newblock Elsevier, 1995.



\bibitem{merino}
C.~Merino~L{\'o}pez.
\newblock Chip firing and the {T}utte polynomial.
\newblock {\em Ann. Comb.}, 1:253--259, 1997.

\bibitem{tesler}
K.~M\'esz\'aros, A.~H.~Morales, and B.~Rhoades.
\newblock The polytope of {T}esler matrices.
\newblock {\em Selecta Mathematica, to appear}, 
\newblock \arxiv{1409.8566v2}, 2014.


\bibitem{pyy}
D.~Perkinson, Q.~Yang, and K.~Yu.
\newblock G-parking functions and tree inversions.
\newblock {\em Combinatorica}, 1--14, 2013.
\newblock \arxiv{1309.2201}.


\bibitem{postnikov-shapiro}
A.~Postnikov and B.~Shapiro.
\newblock Trees, parking functions, syzygies, and deformations of monomial
  ideals.
\newblock {\em Trans. Amer. Math. Soc.}, 356:3109--3142, 2004.

\bibitem{OEIS}
N.~J.~A. Sloane.
\newblock The online encyclopedia of integer sequences.
\newblock \href{http://www.oeis.org}{\tt oeis.org}.

\bibitem{EC}
R.~P.~Stanley.
\newblock {\em Enumerative combinatorics. vol. 1 (second ed.) and vol.
2 (first ed.)}.
\newblock Cambridge University Press, Cambridge, 2012 and 1999.

\bibitem{AW}
A.~T.~Wilson.
\newblock A weighted sum over generalized {T}esler matrices.
\newblock \arxiv{1510.02684}, 2015.

\end{thebibliography}

\bibliographystyle{plain}

\end{document}